\newcommand{\Aut}{\mathop{\rm Aut}}
\newcommand{\m}{{\cal M}}
\newcommand{\ZZ}{\mathbb{Z}}
\newcommand{\G}{\Gamma}
\newcommand{\fl}{\cal F}
\newcommand{\2}{2_{\{0,1\}}}
\newtheorem{theorem}{Theorem}[section]
\newtheorem{lemma}[theorem]{Lemma}
\newtheorem{proposition}[theorem]{Proposition}
\newtheorem{corollary}[theorem]{Corollary}
\title{Arc Transitive Maps with underlying Rose Window Graphs}
\author{Isabel Hubard, Alejandra Ramos Rivera, Primo\v{z} \v{S}parl}
\begin{document}
%\maketitle

\begin{center}
\Large{\textbf{Arc Transitive Maps with underlying Rose Window Graphs}} \\ [+4ex]
Isabel Hubard{\small$^{a,}$\footnotemark $^{,*}$}, Alejandra Ramos Rivera{\small$^{b,}$\footnotemark}, \
Primo\v z \v Sparl{\small$^{b, c, d,}$\footnotemark}  
\\ [+2ex]
{\it \small 
$^a$Instituto de Matem\'aticas, Universidad Nacional Aut\'onoma de M\'exico (UNAM), Mexico City,
Mexico\\
$^b$University of Primorska, IAM, Muzejski trg 2, 6000 Koper, Slovenia\\
$^c$University of Ljubljana, Faculty of Education, Kardeljeva plo\v s\v cad 16, 1000 Ljubljana, Slovenia\\
$^d$IMFM, Jadranska 19, 1000 Ljubljana, Slovenia}
\end{center}

\addtocounter{footnote}{-2}
\footnotetext{Supported in part by ...}
\addtocounter{footnote}{1}\footnotetext{Supported in part by the Slovenian Research Agency (research program P1-0285 and Young Researchers Grant).}
\addtocounter{footnote}{1} \footnotetext{
Supported in part by the Slovenian Research Agency (research program P1-0285 and research projects N1-0038, J1-6720, J1-7051).

Email addresses: 
isahubard@im.unam.mx (Isabel Hubard), 
alejandra.rivera@iam.upr.si (Alejandra Ramos Rivera),
primoz.sparl@pef.uni-lj.si (Primo\v z \v Sparl).

~*Corresponding author  }

\begin{abstract}
Let $\m$ be a map with the underlying graph $\G$. The automorphism group $\Aut(\m)$ induces a natural action on the set of all vertex-edge-face incident triples, called {\em flags} of $\m$. The map $\m$ is said to be a {\em $k$-orbit} map if $\Aut(\m)$ has $k$ orbits on the set of all flags of $\m$. It is known that there are seven different classes of $2$-orbit maps, with only four of them corresponding to arc-transitive maps, that is maps for which $\Aut(\m)$ acts arc-transitively on the underlying graph $\G$. The Petrie dual operator links these four classes in two pairs, one of which corresponds to the chiral maps and their Petrie duals. 

In this paper we focus on the other pair of classes of $2$-orbit arc-transitive maps. We investigate the connection of these maps to consistent cycles of the underlying graph with special emphasis on such maps of smallest possible valence, namely $4$. We then give a complete classification of such maps whose underlying graphs are arc-transitive Rose Window graphs.         
\end{abstract}

\section{Introduction}

%%%%%%%%%%%%%%%%%%%%%%%%%%
%%%        subsec:AT graphs          %%%%%
%%%%%%%%%%%%%%%%%%%%%%%%%%
\subsection{Arc transitive graphs and consistent cycles}
\label{subsec:AT graphs}

A graph $\G$ without multiple edges is {\em arc-transitive} if its automorphism group $\Aut(\G)$ acts transitively on the set $A(\G)$ of all ordered pairs $(u,v)$ of adjacent vertices of $\Gamma$, called {\em arcs} (and so each edge corresponds to two arcs). When studying arc-transitive graphs investigation of certain cycles, called {\em consistent cycles}, may give an insight into the structure of the graph in question. The notion of consistent cycles was introduced by Conway in 1971 and has recently been studied in various families of graphs and other combinatorial structures (see for instance~\cite{BobMikPot11, Mik13, MikPotWil07} and the references therein). 

Let $\G$ be a graph admitting an arc-transitive group of automorphisms $G \leq \Aut(\G)$. A directed (but not rooted) cycle $\vec{C} = (v_0,v_1,\ldots , v_{r-1})$ of $\Gamma$ is said to be {\em $G$-consistent} if there exists $g \in G$ mapping each $v_i$ to $v_{i+1}$ (where the indices are computed modulo $r$). For us a directed cycle is thus nothing but a connected subgraph of valence $2$ together with one of its two possible orientations. In this case $g$ is said to be a {\em shunt} of $\vec{C}$. Of course, the {\em inverse} $\vec{C}^{-1} = (v_0,v_{r-1},v_{r-2}, \ldots, v_1)$ is $G$-consistent if and only if $\vec{C}$ is $G$-consistent. Thus an (undirected) cycle is said to be $G$-consistent if both of its two corresponding directed cycles are $G$-consistent. 

Suppose $\vec{C}$ is a $G$-consistent directed cycle. It may happen that there is an automorphism in $G$, mapping $\vec{C}$ to $\vec{C}^{-1}$. In such a case we say that the underlying undirected cycle $C$ of $\vec{C}$ is a {\em $G$-symmetric} consistent cycle. Otherwise it is a {\em $G$-chiral} consistent cycle. It is well known and easy to see that $G$ induces a natural action on the set of all $G$-consistent (directed) cycles. The above remarks thus imply that each $G$-orbit of $G$-symmetric consistent cycles corresponds to one $G$-orbit of $G$-consistent directed cycles, while each $G$-orbit of $G$-chiral consistent cycles corresponds to two such orbits. Moreover, the following has been proved in~\cite[Corollary~5.2]{MikPotWil07}.

\begin{proposition}{\rm (\cite{MikPotWil07})}
\label{pro:cons}
Let $\G$ be a graph of valency $k$ admitting an arc-transitive group of automorphisms $G$ and let $s$ and $c$ denote
the numbers of $G$-orbits of $G$-symmetric and $G$-chiral consistent cycles, respectively. Then $s+2c = k-1$. In particular, if $k$ is even then $\G$ contains at least one $G$-orbit of $G$-symmetric consistent cycles.
\end{proposition}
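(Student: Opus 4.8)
The plan is to reduce the statement to the claim that the number of $G$-orbits of $G$-consistent \emph{directed} cycles of $\G$ equals $k-1$. Granting this, the discussion preceding the proposition already does the bookkeeping: each $G$-orbit of $G$-symmetric consistent cycles accounts for exactly one $G$-orbit of directed cycles, while each $G$-orbit of $G$-chiral consistent cycles accounts for exactly two. Hence $s + 2c = k-1$. The parity remark is then immediate: if $k$ is even then $k-1$ is odd, so $s = (k-1) - 2c$ is odd and in particular $s \ge 1$.

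To count directed consistent cycles I would use a double-counting argument. Let $X$ denote the set of all $G$-consistent directed cycles of $\G$, and let $T$ be the set of pairs $(a,g)$ where $a = (u,v)$ is an arc, $g \in G$ satisfies $g(u) = v$ (so that $g(a)$ is an arc issuing from $v$), and $g(v) \ne u$ (so that no immediate backtracking occurs). Each pair $(a,g) \in T$ determines a directed cycle $\vec{C}_{a,g} = (v_0, v_1, \ldots)$ with $v_0 = u$ and $v_{i+1} = g(v_i)$, for which $g$ is a shunt; this yields a surjection $\pi \colon T \to X$, and $G$ acts on $T$ by $h \cdot (a,g) = (h(a), hgh^{-1})$ so that $\pi$ is $G$-equivariant. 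First I would compute $|T|$. For a fixed arc $a = (u,v)$, arc-transitivity of $G$ guarantees that for each neighbour $w$ of $v$ there is an automorphism carrying $(u,v)$ to $(v,w)$, so the set of $g$ with $g(a) = (v,w)$ is a coset of the arc stabiliser $G_a$; discarding the single neighbour $w = u$ leaves $(k-1)|G_a|$ admissible elements $g$. Summing over all $|V(\G)|\,k$ arcs and using $|G_a| = |G|/(|V(\G)|\,k)$ gives $|T| = (k-1)|G|$.

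The heart of the argument is to relate the orbits of $X$ to $|T|$. Using the standard orbit-counting identity for the $G$-action on $X$, the number of $G$-orbits equals $\frac{1}{|G|}\sum_{\vec{C} \in X} |G_{\vec{C}}|$, where $G_{\vec{C}}$ is the stabiliser of the directed cycle $\vec{C}$ (setwise, preserving its orientation). I would then show that $|G_{\vec{C}}|$ equals the size of the fibre $\pi^{-1}(\vec{C})$. On one hand, an element of $G_{\vec{C}}$ acts on $\vec{C}$ as a rotation, giving a homomorphism $G_{\vec{C}} \to \ZZ_r$ (with $r$ the length of $\vec{C}$) whose kernel is the pointwise stabiliser $K_{\vec{C}}$ of the vertices of $\vec{C}$; since $\vec{C}$ is consistent a shunt realises rotation by $1$, so this map is onto and $|G_{\vec{C}}| = r\,|K_{\vec{C}}|$. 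On the other hand a pair in $\pi^{-1}(\vec{C})$ is determined by a choice of forward base arc of $\vec{C}$ ($r$ choices) together with a shunt of $\vec{C}$ realising rotation by $1$ (these form a coset of $K_{\vec{C}}$, hence $|K_{\vec{C}}|$ choices), so $|\pi^{-1}(\vec{C})| = r\,|K_{\vec{C}}|$ as well. Therefore $\sum_{\vec{C} \in X} |G_{\vec{C}}| = \sum_{\vec{C} \in X} |\pi^{-1}(\vec{C})| = |T|$, and the number of $G$-orbits of $X$ is $|T|/|G| = k-1$.

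The main obstacle I anticipate is precisely this last matching of the fibre size with the stabiliser order, and it is here that consistency is indispensable: without a shunt the image of $G_{\vec{C}} \to \ZZ_r$ could be a proper subgroup and the clean factorisation $|G_{\vec{C}}| = r|K_{\vec{C}}|$ would fail. I would also be careful to stress that $G$ need \emph{not} act freely (or even semiregularly) on $T$, since a nontrivial automorphism may fix a directed cycle pointwise while commuting with its shunt; consequently one cannot simply identify $k-1$ with the number of $G$-orbits on $T$, and it is essential to count the orbits of $X$ through the stabiliser sum above rather than na\"\i vely dividing $|T|$ by $|G|$ at the level of $T$.
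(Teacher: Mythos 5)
The paper does not prove this proposition at all: it is quoted verbatim from~\cite{MikPotWil07}, and the only related content in the paper is the paragraph preceding it, which records the bookkeeping that a $G$-orbit of $G$-symmetric consistent cycles corresponds to one $G$-orbit of consistent directed cycles while a $G$-orbit of $G$-chiral ones corresponds to two. You use exactly that reduction, and then supply what the paper omits, namely a self-contained proof (in the spirit of the Conway--Biggs counting argument formalized in the cited reference) that the number of $G$-orbits of $G$-consistent directed cycles is $k-1$. Your argument is correct. The count $|T|=(k-1)|G|$ is right (for each arc $a=(u,v)$ the admissible $g$ form $k-1$ cosets of $G_a$, and $|G_a|=|G|/(|V(\G)|\,k)$ by arc-transitivity), and the crux --- matching the fibre size $|\pi^{-1}(\vec{C})|=r\,|K_{\vec{C}}|$ (forward arcs times shunts, the shunts forming a coset of the pointwise stabilizer $K_{\vec{C}}$) with the stabilizer order $|G_{\vec{C}}|=r\,|K_{\vec{C}}|$ (surjectivity of $G_{\vec{C}}\to\ZZ_r$ being exactly where consistency enters) --- is sound, as is your closing caveat that one must count orbits on $X$ via the stabilizer sum rather than pretend $G$ acts freely on $T$. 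Two details worth making explicit in a final write-up: everything assumes $\G$ finite (so that $g$ has finite order and orbit-counting applies), and one should check that $\vec{C}_{a,g}$ really is a cycle of length at least $3$ --- distinctness of $u,g(u),\ldots,g^{r-1}(u)$ follows from minimality of the return time $r$, while $r\geq 3$ follows from the absence of loops together with your non-backtracking condition $g(v)\neq u$, which is precisely what rules out $r=2$. Also note that the stabilizer homomorphism $G_{\vec{C}}\to\ZZ_r$ is well defined because a directed cycle of length at least $3$ is never equal to its inverse, so orientation-preservation is automatic for elements fixing $\vec{C}$ in $X$.
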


In this paper we will be studying graphs admitting a group of automorphisms acting regularly on their arc-set (we say that such a group is {\em $1$-regular}). For such situations the following observation, which is an immediate corollary of \cite[Corollary~2.3]{Mik13}, is useful.

\begin{lemma}
\label{le:cons}
Let $\G$ be graph admitting a $1$-regular group of automorphisms $G$ and let $e$ be an edge of $\G$. Then each $G$-orbit of $G$-consistent directed cycles of $\G$ contains exactly one $G$-consistent directed cycle containing $e$.
\end{lemma}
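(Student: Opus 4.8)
The plan is to exploit the regular action of $G$ on the arc set $A(\G)$ via an orbit-counting argument. Fix a $G$-orbit $\mathcal{O}$ of $G$-consistent directed cycles and pick a representative $\vec{C} = (v_0,v_1,\ldots,v_{r-1})\in\mathcal{O}$ with shunt $g$. The first step is to pin down the shunt: since $g^i$ maps $v_0$ to $v_i$, we have $g^i\neq 1$ for $0<i<r$, while $g^r$ fixes the arc $(v_0,v_1)$ and hence equals the identity because $G$ is arc-regular; thus $g$ has order exactly $r$. The same arc-regularity identifies the $G$-stabiliser of the directed cycle $\vec{C}$: any $h\in G$ preserving $\vec{C}$ together with its orientation must act as a rotation $v_i\mapsto v_{i+j}$, so it agrees with $g^j$ on the arc $(v_0,v_1)$ and therefore equals $g^j$. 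Hence the stabiliser is precisely $\langle g\rangle$, of order $r$, and $|\mathcal{O}| = |G|/r$.

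The heart of the argument is an incidence count between the cycles of $\mathcal{O}$ and the arcs of $\G$. Each directed cycle in $\mathcal{O}$ has exactly $r$ forward arcs, so the number of pairs $(\vec{D},a)$ with $\vec{D}\in\mathcal{O}$ and $a$ a forward arc of $\vec{D}$ equals $|\mathcal{O}|\cdot r = |G| = |A(\G)|$, the last equality being exactly what $1$-regularity asserts. The projection $(\vec{D},a)\mapsto a$ is $G$-equivariant and $G$ is transitive on $A(\G)$, so all its fibres have equal size; as the number of incidences matches the number of arcs, every arc is a forward arc of exactly one directed cycle of $\mathcal{O}$. This uniform statement is the crux and is precisely where $1$-regularity (rather than mere arc-transitivity) is indispensable.

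It remains to pass from this arc-level statement to the assertion for the edge $e$: fixing the arc $a=(u,v)$ underlying $e$, the previous step produces a unique $\vec{D}\in\mathcal{O}$ having $a$ as a forward arc, and this $\vec{D}$ is the claimed directed cycle of $\mathcal{O}$ through $e$. The main point requiring care is the bookkeeping around the two arcs $(u,v)$ and $(v,u)$ of $e$ together with the orientation conventions: one must match the direction along $e$ with the orientation of the consistent cycle and keep track of how the inverse $\vec{D}^{-1}$ sits relative to $\mathcal{O}$, which behaves differently according to whether the underlying cycle is $G$-symmetric or $G$-chiral. Handling this correspondence cleanly, so that the edge $e$ selects exactly one cycle of the orbit, is the only genuine obstacle; everything else reduces to the regular-action counting above.
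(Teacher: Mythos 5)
Your first two paragraphs are correct and complete, and they in fact prove the statement in the form that the paper actually needs: for a $1$-regular group $G$, every $G$-orbit of $G$-consistent directed cycles contains exactly one member through each \emph{arc} of $\G$. (Note that the paper itself gives no proof of Lemma~\ref{le:cons}; it is quoted as an immediate consequence of \cite[Corollary~2.3]{Mik13}, so a self-contained argument like yours is a legitimate alternative route. Your identification of the order of the shunt, the computation of the orientation-preserving setwise stabiliser of $\vec{C}$ as exactly $\langle g\rangle$ via freeness on arcs, and the double count $|\mathcal{O}|\cdot r=|G|=|A(\G)|$ combined with the equivariance of the incidence projection are all sound.)

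The gap is in your third paragraph, and it is not ``bookkeeping'': the bridge you defer cannot be built, because the edge formulation is not equivalent to the arc formulation you proved. By your own arc-level result, $\mathcal{O}$ contains exactly one directed cycle $\vec{D}_1$ through $(u,v)$ and exactly one directed cycle $\vec{D}_2$ through the reversed arc $(v,u)$; these are distinct, since a directed cycle of length at least three cannot traverse an edge in both directions, and both of them contain $e$. So, read literally, each orbit contains exactly \emph{two} directed cycles containing $e$, never one. What is true at the level of the edge is an undirected statement: the two members of $\mathcal{O}$ through $e$ have the same underlying undirected cycle if and only if $\mathcal{O}$ is closed under inversion, i.e.\ iff its cycles are $G$-symmetric, in which case uniqueness of the member through $(v,u)$ forces $\vec{D}_2=\vec{D}_1^{-1}$. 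For a $G$-chiral orbit --- and these do occur for $1$-regular groups, e.g.\ tetravalent graphs whose vertex stabilisers are cyclic of order four, cf.\ Lemma~\ref{le:tetra_cons} --- one has $\vec{D}_1^{-1}\notin\mathcal{O}$, so $\vec{D}_1$ and $\vec{D}_2$ have distinct underlying cycles and even the undirected uniqueness fails. This distinction matches how the lemma is actually applied later: in the proof of Theorem~\ref{th:RW(iv)} it is invoked for the arc $(x_{-1},x_0)$ (your arc version), while in the proof of Theorem~\ref{the:1-regular} it is applied to orbits of $G$-symmetric cycles to produce one face from each orbit per edge. The correct repair is therefore not more careful orientation-tracking, but to stop at the arc version you already established and, when the edge-level statement is needed, to add the hypothesis that the orbit is symmetric together with the one-line observation $\vec{D}_2=\vec{D}_1^{-1}$.
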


In the case that the graph under consideration is tetravalent we can say more.

%%%%  le:tetra_cons
\begin{lemma}
\label{le:tetra_cons}
Let $\G$ be a tetravalent graph admitting a $1$-regular subgroup $G$ of automorphisms. Then $\G$ has three $G$-orbits of $G$-consistent cycles, all of which are $G$-symmetric, if and only if the vertex stabilizers in $G$ are isomorphic to the Klein $4$-group.
\end{lemma}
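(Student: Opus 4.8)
The plan is to combine Proposition~\ref{pro:cons} with a local analysis of the vertex stabiliser acting on the shunts of the consistent directed cycles through a fixed arc. Since $G$ is $1$-regular on the tetravalent graph $\G$, the stabiliser $G_v$ acts regularly on the four arcs emanating from $v$, so $|G_v| = 4$ and hence $G_v$ is isomorphic either to $\ZZ_4$ or to $\ZZ_2 \times \ZZ_2$; the task is to match these two possibilities with the two solutions $(s,c) \in \{(3,0),(1,1)\}$ of $s+2c = k-1 = 3$ afforded by Proposition~\ref{pro:cons}. Fix an arc $(u,v)$ and let $\sigma \in G$ be the unique element with $\sigma(u)=v$ and $\sigma(v)=u$; since an element fixing both $u$ and $v$ fixes the arc $(u,v)$ and is therefore trivial, we have $G_u \cap G_v = 1$ and so $\sigma^2 = 1$. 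The elements sending $u$ to $v$ form the coset $\sigma G_u$, and a short check shows that the shunts of the directed consistent cycles through $(u,v)$ are exactly the elements $g_h := \sigma h$ with $h \in G_u \setminus \{1\}$ (the excluded element $g=\sigma$ would backtrack). As there are precisely three $G$-orbits of directed consistent cycles and, by Lemma~\ref{le:cons}, each meets the arc $(u,v)$ in exactly one cycle, the three cycles $\vec{C}_{g_h}$ represent the three orbits.

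The first, easier direction treats $G_u \cong \ZZ_2 \times \ZZ_2$. Here every $h \in G_u \setminus \{1\}$ is an involution, so $g_h^{-1} = h^{-1}\sigma = h\sigma$, while $\sigma g_h \sigma^{-1} = \sigma(\sigma h)\sigma = h\sigma$; thus $\sigma$ inverts each shunt $g_h$. Since $\sigma$ maps the arc $(u,v)$ to $(v,u)$, it carries $\vec{C}_{g_h}$ to a directed consistent cycle through $(v,u)$ whose shunt is $g_h^{-1}$; as the shunt determines a directed consistent cycle among those through a fixed arc, this image is exactly $\vec{C}_{g_h}^{-1}$. Hence every $\vec{C}_{g_h}$ is $G$-symmetric, and therefore all three orbits are symmetric, giving $(s,c) = (3,0)$.

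For the converse suppose $G_u \cong \ZZ_4 = \langle t\rangle$. The neighbour of $u$ preceding $v$ on $\vec{C}_{g_h}$ is $g_h^{-1}(u) = h^{-1}\sigma(u) = h^{-1}(v)$, so the two neighbours of $u$ on $\vec{C}_{g_h}$ are $v$ and $h^{-1}(v)$. For $h = t$ these are $v$ and $t^{3}(v)$, which are consecutive in the $4$-cycle that $t$ induces on the neighbours of $u$, whereas the unique involution $t^2$ of $G_u$ interchanges the two pairs of opposite neighbours $\{v,t^2(v)\}$ and $\{t(v),t^3(v)\}$. Now if the cycle underlying $\vec{C}_{t}$ were $G$-symmetric, its setwise stabiliser would act on it as a dihedral group and would therefore contain a reflection fixing some vertex $w$ of the cycle; this reflection is an involution in $G_w$ interchanging the two neighbours of $w$ on the cycle, and conjugating by a suitable power of the shunt transports it to an involution of $G_u$ interchanging $v$ and $t^3(v)$. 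Since $t^2$ is the only involution of $G_u$ and it does not interchange this consecutive pair, no such reflection exists; hence $\vec{C}_{t}$ is $G$-chiral. Thus $c \ge 1$, which together with $s+2c = 3$ forces $(s,c) = (1,1)$, so not all consistent cycles are symmetric.

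Combining the two directions, $\G$ has three $G$-orbits of consistent cycles all of which are $G$-symmetric precisely when $(s,c) = (3,0)$, and this happens if and only if $G_v \cong \ZZ_2 \times \ZZ_2$. The only genuinely delicate point is the chirality claim in the cyclic case, namely ruling out every possible reversing automorphism. The crux is the observation that a symmetric cycle must admit a \emph{vertex}-reflection, i.e.\ an involution in a vertex stabiliser swapping the two cycle-neighbours there; this localises the obstruction to $G_u$, where the single involution $t^2$ visibly fails to reverse the turning cycles $\vec{C}_{t}$ and $\vec{C}_{t^3}$.
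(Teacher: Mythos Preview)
Your argument is correct and follows the same line as the paper's: both split on whether $G_v \cong \ZZ_4$ or $\ZZ_2 \times \ZZ_2$ and reduce the symmetry of a consistent cycle to whether some involution in a vertex stabiliser swaps the two cycle-neighbours of that vertex. The only cosmetic differences are that in the Klein case you reverse each $\vec{C}_{g_h}$ with the edge-flip $\sigma$ (via $\sigma g_h\sigma = g_h^{-1}$) rather than with an involution of $G_v$ as the paper does, and in the cyclic case you make explicit the conjugation by a shunt power that the paper leaves implicit when it invokes the reversing element $\delta \in G_v$.
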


\begin{proof}
Let $v \in V(\G)$ be a vertex of $\G$. Since $\G$ is tetravalent and $G$ is $1$-regular the vertex stabilizer $G_v$ is isomorphic either to $\ZZ_4$ or $\ZZ_2 \times \ZZ_2$. 

Suppose first that $G_v = \langle \beta \rangle \cong \ZZ_4$, let $u$ be a neighbor of $v$ and let $u_i = u\beta^i$ for $i \in \ZZ_4$. Since $G$ is $1$-regular there exists an automorphism $\gamma \in G$ mapping the arc $(u_0,v)$ to the arc $(v,u_1)$. Then $\gamma$ is a shunt of a directed $G$-consistent cycle $\vec{C}$ containing $(u_0,v,u_1)$. If the underlying cycle $C$ was $G$-symmetric, there would exist an automorphism $\delta \in G_v$ interchanging $u_0$ and $u_1$, which is impossible as $G_v = \langle \beta \rangle$. Thus $C$ is $G$-chiral, and so Proposition~\ref{pro:cons} implies that $\G$ has one $G$-orbit of $G$-symmetric and one $G$-orbit of $G$-chiral consistent cycles. 

Suppose now that $G_v \cong \ZZ_2 \times \ZZ_2$ and note that $1$-regularity implies that the action of $G_v$ on its four neighbors is transitive. Let $\vec{C} = (v_0,v_1,\ldots , v_{n-1})$ be a directed $G$-consistent cycle with $v = v_0$. By assumption there exists $\beta \in G_v$ interchanging $v_1$ and $v_{n-1}$. Let $\vec{C'} = \vec{C}\beta$ and observe that both $\vec{C}^{-1}$ and $\vec{C'}$ contain the directed $2$-path $(v_1,v_0,v_{n-1})$. Since $G$ is $1$-regular there exists a unique automorphism $\gamma \in G$ mapping the arc $(v_1,v_0)$ to the arc $(v_0,v_{n-1})$, and so $\vec{C'}$ and $\vec{C}^{-1}$ have the same shunt in $G$ (namely $\gamma$), implying that they coincide. Thus the underlying cycle of $\vec{C}$ is a $G$-symmetric consistent cycle, and consequently all $G$-consistent cycles are $G$-symmetric.  
\hfill $\Box$
\end{proof}

%%%%%%%%%%%%%%%%%%%%%%%%%%
%%%        subsec:Rose Window graphs         %%%%%
%%%%%%%%%%%%%%%%%%%%%%%%%%
\subsection{Rose Window graphs}
\label{subsec:Rose Window graphs}

In 2008 Wilson~\cite{Wil08} introduced a family of tetravalent graphs now known as the Rose Window graphs. This class of graphs has been studied quite a lot and is now well understood (see for instance~\cite{DobKovMik15, KovKutMar10, KovKutRuf10}). In~\cite{Wil08} Wilson identified four specific subfamilies of Rose Window graphs (defined below) and proved that their members are all arc-transitive. His conjecture that each edge-transitive Rose Window graph (which in the case of Rose Window graphs is equivalent to being arc-transitive) belongs to one of these four subfamilies was confirmed in 2010 by Kov\'acs, Kutnar and Maru\v si\v c~\cite{KovKutMar10}. 

Let $n \geq 3$ be an integer and let $1 \leq r \leq n-1$, with $r \neq n/2$, and $0 \leq a \leq n-1$ be integers. The {\em Rose Window graph} $R_n(a, r)$ is then the graph with vertex-set $\{x_i \mid i\in \ZZ_n \} \cup \{y_i \mid  i\in \ZZ_n\}$ whose edge-set consists of four kinds of edges:
\begin{itemize}
\itemsep = 0pt
\item the set of all {\em rim edges} $x_i x_{i+1}$, $i \in \ZZ_n$;
\item the set of all {\em hub edges} $y_i y_{i+r}$, $i \in \ZZ_n$;
\item the set of all {\em in-spokes} $x_iy_i$, $i \in \ZZ_n$;
\item the set of all {\em out-spokes} $x_iy_{i-a}$, $i \in \ZZ_n$,
\end{itemize}
where all the indices are computed modulo $n$. It is clear that we can assume $a \leq n/2$ and $r < n/2$.
Observe that the graph $R_n(a,r)$ admits the automorphisms
\begin{equation}
\label{eq:rhomu}
\rho = (x_0, x_1, \ldots, x_{n-1})(y_0, y_1, \ldots, y_{n-1}) \quad \mathrm{and} \quad \mu,
\end{equation}
where $\mu$ interchanges each $x_i$ with $x_{n-i}$ and each $y_i$ with $y_{n-i-a}$. Note that the group $\langle \rho, \mu \rangle \cong D_n$ (the dihedral group of order $2n$) has two orbits on the vertex-set of $R_n(a,r)$.

We can now state the result~\cite[Corollary~1.3]{KovKutMar10} giving a complete classification of arc-transitive Rose Window graphs.

\begin{proposition}~{\rm (\cite{KovKutMar10})}
\label{pro:RW}
Let $n \geq 3$ be an integer and let $1 \leq r < n/2$ and $0 \leq a \leq n/2$ be integers. Then the Rose Window graph $R_n(a,r)$ is arc-transitive if and only if it belongs to one of the following four families:
\begin{itemize}
\itemsep = 0pt
\item[(i)] $R_n(2,1)$;
\item[(ii)] $R_{2m}(m-2,m-1)$; 
\item[(iii)] $R_{2m}(2b,r)$, where $b^2 \equiv \pm 1 \pmod{m}$ and either $r = 1$ or $r = m-1$, in which case $m$ must be even.
\item[(iv)] $R_{12m}(3m+2,3m-1)$ or $R_{12m}(3m-2,3m+1)$.
\end{itemize}
\end{proposition}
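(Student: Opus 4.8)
The plan is to split the proof into the two directions, of which the reverse (necessity) direction is the substantial one. For the forward direction I would, for each of the four listed families, exhibit an explicit automorphism $\varphi$ that interchanges the two orbits $\{x_i\}$ and $\{y_i\}$ of the dihedral group $D_n = \langle \rho, \mu\rangle$ from~(\ref{eq:rhomu}). Since $D_n$ is already transitive on each of the four edge-types separately and on each of its two vertex-orbits, adjoining a single orbit-swapping automorphism to $\langle\rho,\mu\rangle$ yields a vertex- and arc-transitive group. Concretely, one guesses $\varphi$ as a map of the form $x_i \mapsto y_{ci+d}$, $y_i \mapsto x_{c'i+d'}$ (possibly composed with an index reversal), and verifies that the defining congruences for $a$ and $r$ in each family are exactly what is needed for $\varphi$ to send rim edges to hub edges and in-spokes/out-spokes to spokes; this is the constructive step essentially carried out by Wilson in~\cite{Wil08}.

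For the necessity direction, suppose $R_n(a,r)$ is arc-transitive. The key structural fact I would use is that $D_n$ has precisely the two vertex-orbits $X=\{x_i\}$ and $Y=\{y_i\}$, so arc-transitivity forces the existence of some $\varphi \in \Aut(R_n(a,r))$ with $X\varphi = Y$. First I would pin down the local combinatorics: compute the girth and, for an edge $e$, the number of girth-cycles through $e$ of each $D_n$-type (rim, hub, in-spoke, out-spoke). Because arc-transitivity makes all arcs equivalent, these counts must match across types, which already yields a first batch of Diophantine constraints relating $a$, $r$ and $n$ and forces the short $4$- and $6$-cycles to be distributed symmetrically between the rim and the hub. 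I would then analyze how $\varphi$ must act: the rim $n$-cycle $(x_0,\ldots,x_{n-1})$ is a distinguished cycle on $X$, so $\varphi$ must carry it to a corresponding cycle structure inside $Y$, whose components are the hub cycles determined by $\gcd(n,r)$. Matching these forces relations between $r$ and $n$, while matching the images of the in- and out-spokes forces the multiplier governing $\varphi$ to satisfy a quadratic congruence.

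The final and hardest step is the casework that converts these constraints into exactly the four families. Here I would branch on the parity of $n$ (writing $n=2m$ where forced), on $\gcd(n,r)$, and on whether $\varphi$ preserves or reverses the cyclic order, solving the resulting congruences $b^2\equiv\pm1\pmod m$ together with the divisibility conditions that single out $R_n(2,1)$, $R_{2m}(m-2,m-1)$, the family $R_{2m}(2b,r)$ with $r\in\{1,m-1\}$, and the two exceptional-looking families $R_{12m}(3m\pm2,3m\mp1)$. The main obstacle I anticipate is precisely this bookkeeping: ensuring that every admissible triple $(n,a,r)$ really does fall into one of the four families and that no extra sporadic solutions of the quadratic congruence survive once the girth-cycle and spoke-matching conditions are imposed simultaneously. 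Controlling this requires carefully ruling out the many a priori possible orbit-swapping maps, which is where the bulk of the technical work in~\cite{KovKutMar10} lies.
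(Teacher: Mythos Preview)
The paper does not prove this proposition at all: it is quoted verbatim as \cite[Corollary~1.3]{KovKutMar10} and used as a black box throughout Section~\ref{sec:2_01onRW}. So there is no ``paper's own proof'' to compare your proposal against; the authors simply import the classification.

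That said, your outline is a fair high-level description of how the result is actually obtained in the literature. The forward direction is exactly Wilson's contribution in~\cite{Wil08}: he writes down, for each of the four families, an explicit automorphism swapping the $x$- and $y$-vertices (your $\varphi$), which together with $\langle\rho,\mu\rangle$ gives arc-transitivity. The reverse direction is the content of~\cite{KovKutMar10}, and your sketch captures its spirit --- one must analyze an orbit-swapping automorphism and extract arithmetic constraints on $(n,a,r)$ --- but be aware that the actual argument there is not organized around girth-cycle counts in the way you describe. Kov\'acs, Kutnar and Maru\v si\v c proceed via a more structural route (normal quotients and covers, analysis of the kernel of the action on the bipartition), and the quadratic congruence $b^2\equiv\pm1\pmod m$ emerges from that rather than from matching $4$- and $6$-cycles directly. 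Your proposed cycle-counting approach might eventually get there, but the ``main obstacle'' you flag --- ruling out spurious orbit-swapping maps and sporadic congruence solutions --- is genuinely hard, and your sketch does not indicate how you would close it. As written this is a plan, not a proof; for the purposes of the present paper, citing~\cite{KovKutMar10} is the intended and sufficient move.
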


We remark that in~\cite{KovKutMar10} and \cite{KovKutRuf10} the order of the families (iii) and (iv) (called (d) and (c) there) was reversed but we choose to stick with the order and names given in~\cite{Wil08} where the families were first introduced. 

%%%%%%%%%%%%%%%%%%%%%%%%%%
%%%        sec:Maps         %%%%%
%%%%%%%%%%%%%%%%%%%%%%%%%%

\section{Maps}
\label{sec:maps}

A {\em map} $\m$ is an embedding of a connected graph $\G$ on a compact surface $S$ without boundary, in such a way that $S\setminus \G$ is a disjoint union of simply connected regions. 
For example, the Platonic Solids can be regarded as maps on the sphere. 
The vertices and edges of the maps are the same as those of its underlying graph, and the {\em faces} of the map are the simply connected regions obtained by removing the graph from the surface.
For simplicity, we often refer to the vertices, edges and faces of a map as their $0$-, $1$-, and $2$-faces, respectively.

By selecting one point in the interior of each $1$- and each $2$-face of $\m$ we can identify an incident triple $\{v,e,f \}$ with the triangle with vertices $v$ and the chosen interior points of the edge $e$ and the face $f$. 
By doing this everywhere on $\m$ we obtain a triangulation of the map, called the {\em barycentric subdivision} ${\mathcal {BS}}(\m)$ of $\m$. 
If the triangles of ${\mathcal {BS}}(\m)$, called {\em flags}, are then in one-to-one correspondence to the incident triples $\{v,e,f\}$, we say that the map $\m$ is {\em polytopal}.  In such a case $\m$ can be regarded as an abstract polytope of rank 3 (in the sense of \cite{ARP}). In this paper we will only be dealing with polytopal maps. We therefore use the term {\em flag} both for the flags themselves and the corresponding incident triples $\{v,e,f\}$ of $\m$. 
We refer the reader to \cite{polymani} for a detailed study of the polytopality of maps and their generalisations to higher dimensions as maniplexes.

For a given flag $\Phi \in {\mathcal {BS}}(\m)$ corresponding to the incident triple $\{v,e,f\}$ we say that $v$ is the {\em vertex}, $e$ is the {\em edge} and $f$ is the {\em face} of $\Phi$, respectively, and that $v$, $e$ and $f$ {\em belong} to $\Phi$. It is convenient to colour the vertices of ${\mathcal {BS}}(\m)$ with the colours $0$, $1$ and $2$, whenever they represent a vertex, edge or face of $\m$, respectively. 
Observe that given a flag $\Phi \in {\mathcal{BS}}(\m)$, it shares its three sides with three other flags of ${\mathcal {BS}}(\m)$, that we shall denote by $\Phi^0$, $\Phi^1$ and $\Phi^2$, where $\Phi$ and $\Phi^i$ share the vertices of colours different from $i$. (see Figure~\ref{fig:adj_flags}). 
 The flags $\Phi$ and $\Phi^i$ are said to be {\em $i$-adjacent flags}.
 \begin{figure}[htbp]
\begin{center}
\includegraphics[width=5cm]{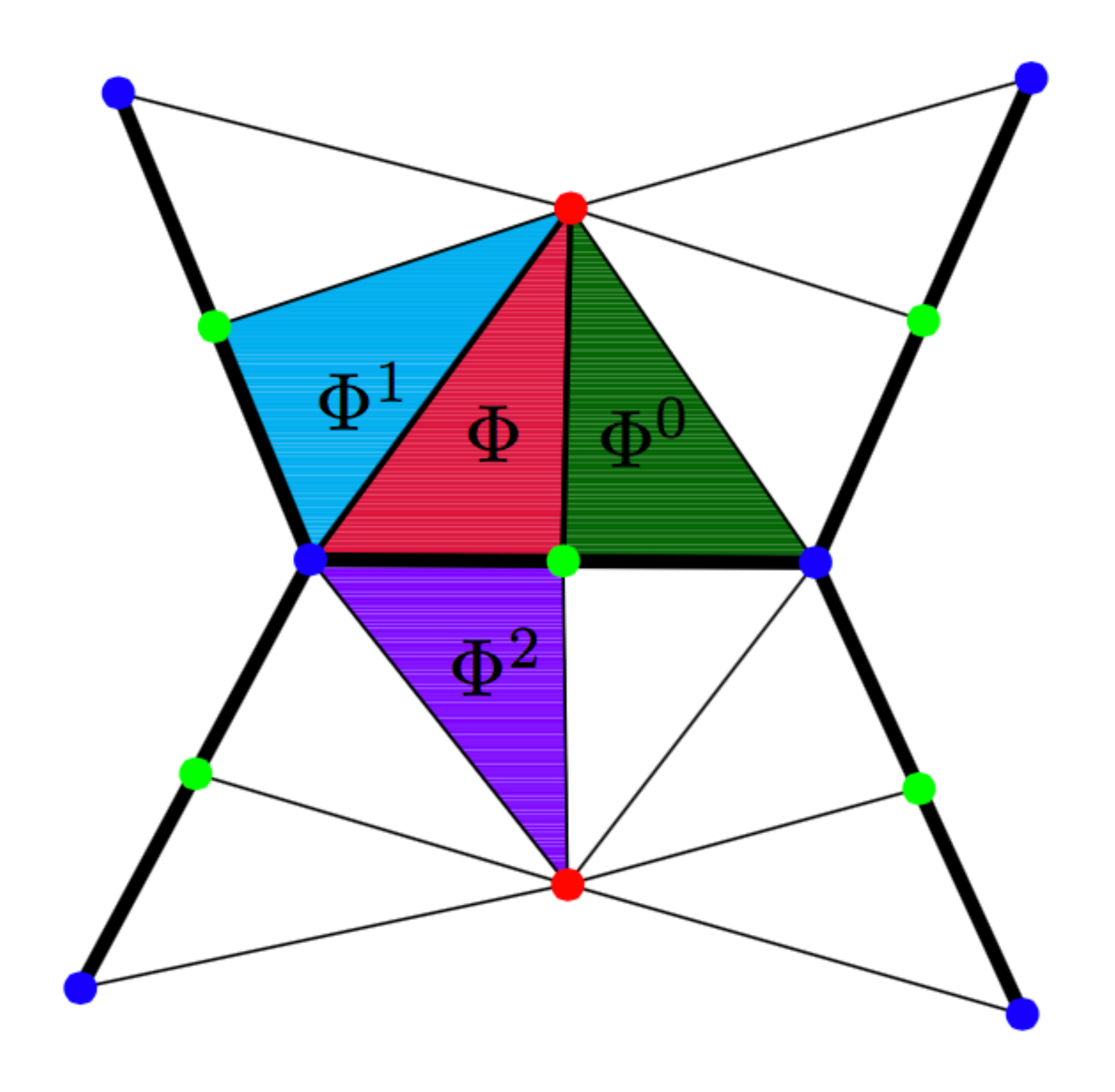}
\caption{A flag $\Phi$ of a map and its adjacent flags.}
\label{fig:adj_flags}
\end{center}
\end{figure}

Given a sequence $i_0, i_1, \dots, i_k$, with $i_j \in \{0,1,2\}$, we define inductively the flag $\Phi^{i_0, i_1, \dots, i_k}$ as the $i_{k}$-adjacent flag to the flag $\Phi^{i_0, i_1, \dots, i_{k-1}}$.
Note that as each edge of $\m$ belongs to exactly four flags, we have that $\Phi^{0,2} = \Phi^{2,0}$ holds for every flag $\Phi\in \fl(\m)$, where $\fl(\m)$ denotes the set of all flags of $\m$. Note also that since the underlying graph $\G$ of $\m$ is connected, given any two flags $\Phi, \Psi \in \fl(\m)$ there exists a sequence $i_0, i_1, \dots, i_k$, with $i_j \in \{0,1,2\}$ such that $\Psi = \Phi^{i_0, i_1, \dots, i_k}$.

Whenever a map is such that all its faces have length $p$ and all its vertices have valency $q$, the map is said to be {\em equivelar} and it has {\em type} $\{p,q\}$.

An {\em automorphism} of a map $\m$ is an automorphism of its underlying graph $\G$ that also preserves the faces of $\m$. The group of all automorphisms of $\m$, denoted by $\Aut(\m)$, has a natural action on the set of flags of $\m$. The following straightforward observation can be very useful.

%%%%  le:i_adj_autom
\begin{lemma}
\label{le:i_adj_autom}
For each automorphism $\alpha \in \Aut(\m)$, each flag $\Phi \in \fl(\m)$ and each $i \in \{0,1,2\}$ the $i$-adjacent flag of $\Phi\alpha$ is the $\alpha$-image of the $i$-adjacent flag of $\Phi$, that is $\Phi^i \alpha = (\Phi\alpha)^i$. 
\end{lemma}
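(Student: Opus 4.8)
The plan is to exploit the fact that $\Phi^i$ admits an intrinsic, incidence-theoretic characterization that any map automorphism must respect. First I would recall that an automorphism $\alpha \in \Aut(\m)$ is by definition an automorphism of the underlying graph $\G$ that preserves the faces of $\m$; consequently $\alpha$ maps vertices to vertices, edges to edges and faces to faces, and it preserves all incidences among them. Writing a flag as its incident triple $\{v,e,f\}$, this means $\Phi\alpha = \{v\alpha, e\alpha, f\alpha\}$, where $v\alpha$ is a vertex, $e\alpha$ an edge and $f\alpha$ a face of $\m$. The crucial point to record is thus that $\alpha$ preserves the colour of each element belonging to a flag.

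Next I would isolate the defining property of the $i$-adjacent flag: for each flag $\Phi$ and each $i \in \{0,1,2\}$, the flag $\Phi^i$ is the unique flag of $\m$ sharing with $\Phi$ the two elements whose colours are different from $i$, while differing from $\Phi$ in the element of colour $i$. That such a flag exists and is unique is precisely what the polytopality assumption on $\m$ guarantees, via the one-to-one correspondence between flags and incident triples $\{v,e,f\}$.

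With these two ingredients in hand, I would conclude by a short comparison. Fixing $i \in \{0,1,2\}$ and writing $\Phi = \{v,e,f\}$, the flag $\Phi^i\alpha$ shares with $\Phi\alpha$ exactly the $\alpha$-images of the two elements of $\Phi$ of colour different from $i$, and differs from $\Phi\alpha$ in the element of colour $i$. Here I would use both that $\alpha$ is a bijection on $\fl(\m)$ and that it preserves colours and incidences, so that two elements of $\Phi$ and $\Phi^i$ coincide if and only if their $\alpha$-images coincide; injectivity is what forbids the altered element of colour $i$ from accidentally agreeing with an unaltered one after applying $\alpha$. Hence $\Phi^i\alpha$ satisfies the defining property of the $i$-adjacent flag of $\Phi\alpha$, and the uniqueness recorded above yields $\Phi^i\alpha = (\Phi\alpha)^i$.

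Since the entire argument rests only on $\alpha$ preserving colours and incidences together with the uniqueness of $\Phi^i$, there is no genuine obstacle to overcome; the single point deserving a moment's care is the verification that a map automorphism really does fix the colour of each element (rather than, say, interchanging the roles of vertices and faces), and this is immediate from the definition of $\Aut(\m)$ as the group of face-preserving graph automorphisms. One could alternatively replace the uniqueness argument by a direct case analysis over $i \in \{0,1,2\}$ (for instance, for $i=0$ observing that $\Phi^0$ replaces $v$ by the other endpoint of $e$, whose $\alpha$-image is the other endpoint of $e\alpha$), but the colour-and-incidence formulation handles all three cases uniformly.
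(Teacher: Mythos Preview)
Your proposal is correct. The paper itself does not supply a proof of this lemma; it merely introduces it as a ``straightforward observation,'' so your argument is precisely the natural way to fill in the omitted details, and there is nothing to compare against.
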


Hence, the connectivity of $\m$ implies that the action of $\Aut(\m)$ is free on $\fl(\m)$ (that is, no nonidentity automorphism of $\m$ fixes any flag). This implies that the action of an automorphism is completely determined by its action on any given flag. Lemma~\ref{le:i_adj_autom} also implies the following.

%%%% adjacentaction
\begin{lemma}\label{adjacentaction}
Let $\m$ be a map and let ${\cal O}_1$ and ${\cal O}_2$ be (possibly the same) orbits of the action of $\Aut(\m)$ on $\fl(\m)$. Suppose that $\Phi^i\in{\cal O}_2$ holds for some $\Phi \in {\cal O}_1$ and some $i \in \{0,1,2\}$. Then $\Psi^i \in {\cal O}_2$ for every $\Psi \in {\cal O}_1$.
\end{lemma}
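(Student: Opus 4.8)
The plan is to deduce this directly from Lemma~\ref{le:i_adj_autom}, which guarantees that passing to the $i$-adjacent flag intertwines with the action of $\Aut(\m)$, together with the basic fact that orbits are invariant under the group. First I would fix an arbitrary flag $\Psi \in {\cal O}_1$. Since $\Phi$ and $\Psi$ lie in the same $\Aut(\m)$-orbit ${\cal O}_1$, I may choose an automorphism $\alpha \in \Aut(\m)$ with $\Phi\alpha = \Psi$; this is the one nontrivial ingredient, namely that two flags in a common orbit are always related by some automorphism.

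Next I would apply Lemma~\ref{le:i_adj_autom} to this $\alpha$ and the flag $\Phi$, which yields $\Psi^i = (\Phi\alpha)^i = \Phi^i\alpha$. By hypothesis $\Phi^i \in {\cal O}_2$, and since ${\cal O}_2$ is an $\Aut(\m)$-orbit it is invariant under every element of $\Aut(\m)$; hence the image $\Phi^i\alpha$ again lies in ${\cal O}_2$. Combining these gives $\Psi^i \in {\cal O}_2$, and as $\Psi$ was an arbitrary element of ${\cal O}_1$ the conclusion holds for all of ${\cal O}_1$.

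There is no genuine obstacle here: the statement is essentially a one-line corollary of Lemma~\ref{le:i_adj_autom}. The only points demanding a little care are applying the commutation relation $\Phi^i\alpha = (\Phi\alpha)^i$ in the correct direction and keeping track of which flag plays which role. Conceptually, the lemma records that the pattern of $i$-adjacencies \emph{between} flag-orbits is well defined at the level of orbits rather than individual flags, which is precisely what the intertwining in Lemma~\ref{le:i_adj_autom} ensures.
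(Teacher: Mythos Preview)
Your proof is correct and is exactly the argument the paper has in mind: the paper does not give a separate proof but simply states that Lemma~\ref{adjacentaction} follows from Lemma~\ref{le:i_adj_autom}, which is precisely the deduction you spell out.
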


We shall say that an automorphism $\alpha$ of $\m$ is a {\em reflection} whenever there exists $\Phi\in\fl(\m)$ and $i\in\{0,1,2\}$ such that $\Phi\alpha=\Phi^i$. Note that by Lemma~\ref{le:i_adj_autom} each reflection is an involution.
We say that $\alpha$ is a {\em  one step rotation} at the face (resp. at the vertex) of $\Phi$ if $\Phi \alpha = \Phi^{0,1}$ or $\Phi \alpha = \Phi^{1,0}$ (resp. $\Phi \alpha = \Phi^{2,1}$ or $\Phi \alpha = \Phi^{1,2}$). Note that since the action of $\Aut(\m)$ on $\fl(\m)$ is free there can be at most one pair of mutually inverse one step rotations at any given vertex or face. The following observation is straightforward.

%%%% face-edge
\begin{lemma}\label{face-edge}
Let $\m$ be a map such that the underlying graph has no multiple edges or loops and has minimal degree at least $3$. If for some face $f$ of $\m$ there exists a one step rotation at $f$ in $\Aut(\m)$ then the traversal of $f$ along its boundary visits each of its vertices and edges exactly once. 
\end{lemma}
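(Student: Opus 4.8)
The plan is to first convert the existence of a one-step rotation into a concrete cyclic shift on the boundary walk of $f$, then to rule out repeated edges (which is the crux), after which repeated vertices follow almost for free. Write the boundary walk of $f$ as $v_0,v_1,\ldots,v_{p-1}$ (indices mod $p$, where $p$ is its length), set $e_i=v_iv_{i+1}$, and let $\Phi_i=\{v_i,e_i,f\}$ be the associated flags. Choosing the witnessing flag appropriately (and replacing $\alpha$ by $\alpha^{-1}$ to dispose of the $\Phi^{1,0}$ case) I may assume $\Phi_0\alpha=\Phi_0^{0,1}=\Phi_1$. Since automorphisms commute with the operations $\Phi\mapsto\Phi^i$ by Lemma~\ref{le:i_adj_autom}, this propagates to $\Phi_i\alpha=\Phi_{i+1}$ for all $i$, so $\alpha$ maps $v_i\mapsto v_{i+1}$ and $e_i\mapsto e_{i+1}$. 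Because the action of $\Aut(\m)$ on flags is free, $\alpha^d$ fixes no flag for $0<d<p$, hence $\alpha^d\neq\mathrm{id}$ for such $d$ (while $\alpha^p=\mathrm{id}$). Finally, since $\Phi_{i+1}=(\Phi_i^{0})^{1}$ shares its vertex $v_{i+1}$ and face $f$ with $\Phi_i^0=\{v_{i+1},e_i,f\}$ but has a different edge, we get $e_{i+1}\neq e_i$ for every $i$; the minimum-degree hypothesis is exactly what guarantees that this second edge of $f$ at $v_{i+1}$ genuinely exists and differs from $e_i$.

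The heart of the argument is to show that $e_0,\ldots,e_{p-1}$ are pairwise distinct. Suppose not. Using $\alpha$-equivariance I may cyclically relabel the walk so that $e_0=e_d$ for some $2\le d<p$ (the value $d=1$ being excluded by $e_i\neq e_{i+1}$). Then $\alpha^d$ fixes the edge $e_0$ and the face $f$; as $\alpha^d\neq\mathrm{id}$ it cannot fix the flag $\{v_0,e_0,f\}$, so it must interchange the two endpoints of $e_0$, giving $v_d=v_1$ and $v_{d+1}=v_0$. Applying powers of $\alpha$ to these two relations and combining them forces $v_k=v_{k+2}$ for all $k$; in particular $v_2=v_0$, whence $e_1=v_1v_2=v_1v_0=e_0$, contradicting $e_0\neq e_1$ from the setup. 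Hence no edge is repeated. I expect this step to be the main obstacle: one must correctly extract the endpoint-swap from a nontrivial automorphism fixing both an edge and an incident face, and then recognize that the two resulting vertex identifications collapse the walk via $v_k=v_{k+2}$ into the forbidden equality $e_0=e_1$.

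Once the edges are known to be distinct, the vertices follow in one line. If $v_i=v_j$ with $i\neq j$, then applying $\alpha$ gives $v_{i+1}=v_{j+1}$, so $e_i$ and $e_j$ have the same pair of endpoints; these are honest (non-loop) edges by the no-loops hypothesis, and since $\G$ has no multiple edges this forces $e_i=e_j$, contradicting the distinctness of the $e_k$. Therefore the boundary walk of $f$ visits each of its vertices and each of its edges exactly once. Thus all three hypotheses are used: the minimum degree makes $1$-adjacency nondegenerate so that $e_i\neq e_{i+1}$, the absence of loops keeps the $e_i$ genuine edges, and the absence of multiple edges lets equal endpoints be upgraded to equal edges in the final step.
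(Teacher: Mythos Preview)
The paper does not supply a proof of this lemma; it is presented as a straightforward observation and left to the reader. Your argument is correct and fills the gap cleanly: the reduction to a cyclic shift on the flags $\Phi_i$, the endpoint-swap extracted from $\alpha^d$ fixing both $e_0$ and $f$ while not fixing the flag $\Phi_0$, and the collapse $v_k=v_{k+2}$ leading to the forbidden $e_0=e_1$ are all sound.

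One small remark on the attribution of hypotheses. The inequality $e_i\neq e_{i+1}$ that you credit to the minimum-degree assumption actually follows from the flag combinatorics alone: in any map the $1$-adjacent flag $\Phi^1$ is by definition distinct from $\Phi$, so its edge differs from that of $\Phi$. The hypotheses that genuinely drive your argument are the absence of loops (so that $e_0$ has two distinct endpoints and the swap $v_d=v_1$, $v_{d+1}=v_0$ is meaningful) and the absence of multiple edges (so that equal endpoint pairs can be upgraded to equal edges in the final steps). The minimum-degree-$3$ condition in the statement thus appears to be a convenience of the paper's setting rather than an essential ingredient of the proof you give.
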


Lemma~\ref{face-edge} thus implies that maps with underlying simple graphs of minimal degree at least $3$ that admit a one step rotation at every face are polytopal.
\medskip

If the action of $\Aut(\m)$ has $k$ orbits on the flags of $\m$ we say that $\m$ is a {\em $k$-orbit map}. The $1$-orbit maps are usually called {\em reflexible} in the literature. A map is reflexible if and only if given a {\em base} flag $\Phi$ there exist automorphisms $\alpha_i$, $i\in\{0,1,2\}$, sending $\Phi$ to $\Phi^i$. In such a case, the automorphism group of $\m$ is generated by $\alpha_0, \alpha_1$ and $\alpha_2$.

The $2$-orbit maps have been studied, for example, in \cite{ruiphd} and \cite{2-orbit}. There exist $7$ classes of $2$-orbit maps. Given $I \subsetneq \{0,1,2\}$ we say that a $2$-orbit map is in {\em class $2_I$} if for any given flag $\Phi$ we have that $\Phi^i$ is in the same $\Aut(\m)$-orbit as $\Phi$ if and only if $i \in I$. 
 By Lemma~\ref{adjacentaction} and the fact that we are dealing with 2-orbit maps, this definition does not depend on the choice of the flag $\Phi$, and thus the $7$ classes are disjoint.
We abbreviate $2_\emptyset$ by $2$ and $2_{\{i\}}$ by $2_i$ for each of $i \in \{0,1,2\}$. 

Maps in class $2$ correspond to {\em chiral} maps, that is, maps that have two orbits on flags under the automorphism group in such a way that adjacent flags belong to different orbits. A map that is either reflexible or chiral is called a {\em rotary} map. Rotary maps admit one step rotations around each of its faces and each of its vertices.

A map is said to be {\em $j$-face transitive} if its automorphism group acts transitively on the $j$-faces and it is called {\em fully transitive} if it is $j$-face transitive for all $j \in \{0,1,2\}$.
Rotary maps are examples of fully transitive maps. However, this is no longer true for all $2$-orbit maps. 
In fact, a $2$-orbit map is $j$-face transitive, for some $j \in \{0,1,2\}$, if and only if $I \neq \{0,1,2\} \setminus \{j\}$ (see \cite[Theorem~5]{2-orbit}). In particular this means that for each $j \in \{0,1,2\}$ there is exactly one class of $2$-orbit maps that are not $j$-face transitive and hence there are just three classes of $2$-orbit maps that are not fully transitive.

A {\em Petrie polygon} of a map $\m$ is a path $P$ along the edges of the underlying graph $\G$ such that every two consecutive edges of $P$ are consecutive edges on the same face of $\m$, but no three consecutive edges of $P$ are consecutive edges on the same face of $\m$. 
For example, a Petrie polygon of a tetrahedron contains each of the four vertices of the tetrahedron exactly once. It is not difficult to see that every edge of $\m$ belongs to at most two Petrie polygons. 
The map having $\G$ as the underlying graph and all the Petrie polygons of $\m$ as faces is called the {\em Petrial} or {\em Petrie dual} of $\m$, and shall be denoted by $\m^\pi$. 
It is well known (see, for example, \cite{medial}) that $(\m^\pi)^\pi \cong \m$ and that $\Aut(\m) \cong \Aut(\m^\pi)$.
Hence, the Petrial of a $k$-orbit map is again a $k$-orbit map. 
Moreover, if $\m$ is a $2$-orbit map in class $2_I$, for some $I \subsetneq \{0,1,2\}$, then $\m^\pi$ is in class $2_{I'}$, where $I' = I \setminus \{0\}$, if $0\in I$ and $2\notin I$, $I'=I\cup \{0\}$ if $0,2\notin I$ and $I'=I$ if $2\in I$ (see \cite{medial}).

%Whenever a map $\m$ admits two automorphisms $\sigma_1$ and $\sigma_2$ acting as rotations around a face $f$ and a vertex $v$ that are incident, we say that the map is {\em rotary}. 
%In this case, by the connectivity of the map, there are at most two orbits of flags under the action of $\Aut(\m)$.
%If there is only one orbits of flags, then $\m$ is said to be {\em reflexible}, while if there are two orbits, it is said to be {\em chiral}.
%
%If $\m$ is a chiral map, then its automorphism group is generated by $\sigma_1$ and $\sigma_2$.
%If $\m$ is reflexible, then there exists an involutory automorphism $\rho_0$ that sends a flag $\Phi$, containing the face $f$ and the vertex $v$, to its $0$-adjacent flag $\Phi^0$, and the automorphism group of $\m$ is generated by $\rho_0, \sigma_1$ and $\sigma_2$.
%Alternatively, whenever $\m$ is reflexible, we can think of $\Aut(\m)$ as generated by three involutions $\rho_0, \rho_1$ and $\rho_2$, such that $\rho_i$ sends the base flag $\Phi$ to its $i$-adjacent flag $\Phi^i$.
%Hence, we have that $\sigma_1 = \rho_1\rho_0$, while $\sigma_2=\rho_2\rho_1$.

\subsection{Arc-transitive maps}

A map is said to be {\em arc-transitive} if its automorphism group is arc-transitive on the underlying graph.
Rotary maps are examples of arc-transitive maps.

Let $\m$ be an arc-transitive map and let $\Phi = (v,e,f) \in \fl(\m)$. If $u$ is the other vertex of the edge $e$, then there exists $\alpha \in \Aut(\m)$, fixing the edge $e$ while interchanging $u$ and $v$. 
This means that $\alpha$ sends $\Phi$ to either $\Phi^0$ or $\Phi^{0,2}$. 
Consequently, $\alpha$ sends $\Phi^2$ to either $\Phi^{0,2}$ or $\Phi^{0}$.
Since $e$ can be mapped by an automorphism of $\Aut(\m)$ to any other edge of $\m$, the connectivity of $\m$ and Lemma~\ref{adjacentaction} imply that there are at most two orbits of flags under the action of $\Aut(\m)$.

Of course, if the action of $\Aut(\m)$ on $\fl(\m)$ has exactly one orbit, then $\m$ is reflexible. However if there are two orbits, then the above remarks imply that $\m$ has to be in class $2_I$ for some $I \subset \{0,1,2\}$ with $2 \notin I$, that is, $\m$ belongs to one of the following four classes of maps: $2$, $2_0$, $2_1$ or $2_{\{0,1\}}$.

The reflexible and chiral maps have been extensively studied in the literature. % (see ???).  
Moreover the maps in class $2_0$ are related to the chiral ones via the Petrie dual operation (and their graphs and groups are the same). This leaves us with the maps of classes $2_1$ and $2_{\{0,1\}}$ which are also related via the Petrie operator. In this paper we thus restrict ourselves to maps of class $\2$. 

Maps in class $\2$ are {\em hereditary} in the sense that all the combinatorial symmetries of their faces can be extended to the entire map (see \cite{hered} for a study of hereditary polytopes). 
We remark that these maps are of type $2^*$ in the sense of \cite{edgetrans}.

If $\m$ is a map in class $\2$, then for each flag $\Phi$ there exist (unique) automorphisms $\alpha_0(\Phi)$ and $\alpha_1(\Phi)$ sending $\Phi$ to the flags $\Phi^0$ and $\Phi^1$, respectively. By Lemma~\ref{le:i_adj_autom} the automorphism $\alpha_1(\Phi^2)$ maps $\Phi$ to $\Phi^{2,1,2}$, and so there also exists an (unique) automorphism $\alpha_{212}(\Phi)$ of $\m$ sending a given flag $\Phi$ to $\Phi^{2,1,2}$. Whenever the flag $\Phi$ will be clear from the context we will write $\alpha_0$, $\alpha_1$ and $\alpha_{212}$ instead of $\alpha_0(\Phi)$, $\alpha_1(\Phi)$ and $\alpha_{212}(\Phi)$, respectively.
 In \cite{2-orbit}, it was shown that if $\m$ is a map in class $2_{\{0,1\}}$ and $\Phi$ is any flag of $\m$, then $\Aut(\m) = \langle \alpha_0(\Phi), \alpha_1(\Phi), \alpha_{212}(\Phi)\rangle$.

Recall that maps in class $\2$ are not $2$-face transitive. On the other hand, in view of the existence of automorphisms $\alpha_0$ and $\alpha_1$, all of the flags corresponding to a given face of such a map $\m$ are in the same $\Aut(\m)$-orbit. Since there are only two orbits of flags this implies that there are two orbits of faces. In other words, there is no automorphism of $\m$ mapping a flag $\Phi$ to either $\Phi^2$ or to $\Phi^{2,0}$. Since the action of $\Aut(\m)$ on $\fl(\m)$ is free, the group $\Aut(\m)$ acts as a $1$-regular group on the underlying graph of $\m$. Moreover, vertex-transitivity of these maps implies that both orbits of faces occur at every vertex, and as a flag cannot be sent to its $2$-adjacent flag, the members of the two orbits of faces alternate around a vertex. In particular, the valency of the vertices is even. 
If $n$ and $m$ are the lengths of the faces of the map $\m$ from the two orbits and the valency of each vertex is $2q$, then we say that the {\em type} of $\m$ is $\left\{ \begin{array}{c} n \\ m \end{array}, 2q\right\}$. In this case the face stabilizers $\langle \alpha_0, \alpha_1 \rangle$ and $\langle \alpha_0, \alpha_{212}\rangle$ are isomorphic to $D_n$ and $D_m$, respectively (assuming the base flag $\Phi$ is in a face of length $n$), while the vertex stabilizer $\langle \alpha_1, \alpha_{212} \rangle$ is isomorphic to $D_q$. 
Furthermore $\langle \alpha_1, \alpha_{212} \rangle$ acts transitively (in fact regularly) on the neighbours of the {\em base} vertex belonging to $\Phi$.

The above remarks imply that the smallest admissible valency of the underlying graph of a map in class $\2$ is $4$. It thus seems natural to first study such maps. It is the aim of this paper to initiate such an investigation. In particular, we classify all maps in class $\2$, underlying an arc-transitive Rose Window graph. We remark that the rotary maps underlying these graphs were classified in~\cite{KovKutRuf10}.
\medskip

%By \cite[Corollary~5.3]{STG}, given a base flag $\Phi$ of a map in class $\2$, the automorphism group of $\m$ is generated by the automorphisms $\alpha_0$, $\alpha_1$ and $\alpha_{212}$. We therefore have the following observation.
 In order to give the desired classification, we first need some results about maps in class $2_{\{0,1\}}$ and their underlying graphs.
Following \cite{STG}, the next lemma is straightforward.

\begin{lemma}
\label{le:aux}
If $\m$ is a map such that there exists a flag $\Phi$ and automorphisms $\alpha_0$, $\alpha_1$ and $\alpha_{212}$ sending $\Phi$ to $\Phi^0$, $\Phi^1$ and $\Phi^{2,1,2}$, respectively, then $\m$ is either a $2$-orbit map in class $\2$ or it is a reflexible map.
\end{lemma}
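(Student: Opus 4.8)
The plan is to analyse the orbit of the base flag $\Phi$ under the group $G=\langle \alpha_0,\alpha_1,\alpha_{212}\rangle$ and to show that $G$, and hence $\Aut(\m)\supseteq G$, has at most two orbits on $\fl(\m)$ with a very controlled adjacency pattern. Write $\mathcal{O}=\Phi G$ for the $G$-orbit of $\Phi$ and $\mathcal{O}'=\Phi^2 G$ for the $G$-orbit of $\Phi^2$. The whole argument rests on Lemma~\ref{le:i_adj_autom}, which lets me commute a group element past the $i$-adjacency operation: for $g\in G$ one has $(\Phi g)^i=\Phi^i g$, so the $g$-image of the $i$-adjacent flag of $\Phi$ is the $i$-adjacent flag of $\Phi g$.

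First I would check that $\mathcal{O}\cup\mathcal{O}'$ is closed under all three adjacencies. For $0$-adjacency this is immediate on $\mathcal{O}$ since $\Phi^0=\Phi\alpha_0\in\mathcal{O}$, and on $\mathcal{O}'$ it follows from the identity $\Phi^{2,0}=\Phi^{0,2}=(\Phi\alpha_0)^2=\Phi^2\alpha_0$, which keeps us inside $\Phi^2 G$. For $1$-adjacency the point on $\mathcal{O}$ is again that $\Phi^1=\Phi\alpha_1\in\mathcal{O}$, while on $\mathcal{O}'$ the key computation is $\Phi^{2,1}=(\Phi^{2,1,2})^2=(\Phi\alpha_{212})^2=\Phi^2\alpha_{212}$, again landing in $\Phi^2 G$. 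Finally, $2$-adjacency sends $\Phi g\mapsto\Phi^2 g$ and $\Phi^2 g\mapsto \Phi g$, so it interchanges $\mathcal{O}$ and $\mathcal{O}'$. Since $\mathcal{O}\cup\mathcal{O}'$ contains $\Phi$ and is closed under $0$-, $1$- and $2$-adjacency, connectivity of $\m$ forces $\mathcal{O}\cup\mathcal{O}'=\fl(\m)$.

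Being two $G$-orbits, $\mathcal{O}$ and $\mathcal{O}'$ are either equal or disjoint, so in either case $\Aut(\m)$ has at most two orbits on $\fl(\m)$. If it has a single orbit, then in particular $\Phi^0$, $\Phi^1$ and $\Phi^2$ all lie in the orbit of $\Phi$, so there are automorphisms sending $\Phi$ to each of $\Phi^0,\Phi^1,\Phi^2$, and by the characterisation of reflexible maps recalled above this means $\m$ is reflexible. If instead $\Aut(\m)$ has exactly two orbits, then $\m$ is a $2$-orbit map in some class $2_I$ with $I\subsetneq\{0,1,2\}$; the automorphisms $\alpha_0$ and $\alpha_1$ place $\Phi^0$ and $\Phi^1$ in the orbit of $\Phi$, forcing $\{0,1\}\subseteq I$, and hence $I=\{0,1\}$, that is, $\m$ is in class $\2$.

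I expect the main obstacle to be the $1$-adjacency closure on $\mathcal{O}'$: everything hinges on rewriting $\Phi^{2,1}$ as $\Phi^2\alpha_{212}$, which requires combining Lemma~\ref{le:i_adj_autom} with the relation $\Phi^{2,2}=\Phi$ and the defining property $\Phi\alpha_{212}=\Phi^{2,1,2}$. This is precisely the step where the hypothesis on $\alpha_{212}$ is indispensable, since we are given no automorphism sending $\Phi$ to $\Phi^2$ directly. The remainder is bookkeeping: the $0$-adjacency closure uses only the relation $\Phi^{0,2}=\Phi^{2,0}$, and the concluding dichotomy is a direct appeal to the definitions of reflexible maps and of the classes $2_I$.
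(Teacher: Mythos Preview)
Your proof is correct and is precisely the standard argument the paper has in mind; the paper itself gives no proof, merely stating that the lemma is straightforward following~\cite{STG}. Your orbit-closure computation via Lemma~\ref{le:i_adj_autom}, the connectivity argument, and the final dichotomy are exactly what is needed.
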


Let $\m$ be a map in class $\2$ and let $\alpha_0$, $\alpha_1$ and $\alpha_{212}$ be the {\em distinguished generators} of $\Aut(\m)$ with respect to some base flag $\Phi$. Then the automorphism $\alpha_0\alpha_1$ acts as a $1$-step rotation of the face belonging to $\Phi$ and $\alpha_0\alpha_{212}$ acts as a $1$-step rotation of the face belonging to $\Phi^2$. 
Moreover,  $\alpha_0$ reveres both of these faces. We have therefore established the following lemma.

\begin{lemma}
\label{facesofhereditarymaps}
Let $\m$ be a map in class $\2$ with the underlying graph $\G$. Then the boundaries of faces of $\m$ are $\Aut(\m)$-symmetric consistent cycles of $\G$. 
\end{lemma}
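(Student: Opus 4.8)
The plan is to translate the two facts recorded in the paragraph immediately preceding the statement — that each face of $\m$ carries a one step rotation in $\Aut(\m)$ and that $\alpha_0$ reverses it — into the language of $\Aut(\m)$-consistent cycles. First I would make sure the boundary of a face is an honest cycle of $\G$. Since $\m$ lies in class $\2$, its valency is $2q\geq 4$, so $\G$ has minimal degree at least $3$, and being arc-transitive it is simple. The face of $\Phi$ admits the one step rotation $\alpha_0\alpha_1$ and the face of $\Phi^2$ the one step rotation $\alpha_0\alpha_{212}$; conjugating these by automorphisms moving faces within their orbit produces a one step rotation at every face. Hence Lemma~\ref{face-edge} applies to all faces, guaranteeing that each face boundary visits its vertices and edges exactly once, so it is a cycle of $\G$ in the sense of Section~\ref{subsec:AT graphs} and its two orientations are directed cycles.

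Next I would fix the base flag $\Phi$ with distinguished generators $\alpha_0,\alpha_1,\alpha_{212}$ and treat the two face orbits separately, represented by the face $f$ of $\Phi$ and the face $f'$ of $\Phi^2$. Writing the boundary of $f$ as $\vec{C}=(v_0,v_1,\ldots,v_{n-1})$ with $\Phi=(v_0,e_0,f)$ and $e_0=v_0v_1$, the fact that $\alpha_0\alpha_1$ is a one step rotation at $f$ means that it shifts the boundary vertices by one position; thus $\alpha_0\alpha_1$ (or its inverse) is a shunt of $\vec{C}$. Consequently $\vec{C}$ is $\Aut(\m)$-consistent, and since a directed cycle is consistent exactly when its inverse is, the undirected boundary cycle $C$ is $\Aut(\m)$-consistent. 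The same argument with the shunt $\alpha_0\alpha_{212}$ applies to $f'$.

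To upgrade consistency to $\Aut(\m)$-symmetry I would invoke the reversal $\alpha_0$. Since $\alpha_0$ sends $\Phi=(v_0,e_0,f)$ to $\Phi^0=(v_1,e_0,f)$, as a graph automorphism it fixes $e_0$ and $f$ while interchanging $v_0$ and $v_1$; being a symmetry of the $n$-gon $f$ that transposes two consecutive vertices, it necessarily acts as $v_i\mapsto v_{1-i}$, which carries $\vec{C}$ to $\vec{C}^{-1}$. By the definition of a symmetric consistent cycle this shows $C$ is an $\Aut(\m)$-symmetric consistent cycle. For the orbit of $f'$ I would observe, using Lemma~\ref{le:i_adj_autom}, that $\Phi^2\alpha_0=(\Phi^0)^2=(\Phi^2)^0$, so that $\alpha_0$ reverses $f'$ as well and the same conclusion holds. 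Finally, since $\Aut(\m)$ acts transitively on each of the two face orbits and conjugation transports both shunts and reversals, every face boundary of $\m$ is an $\Aut(\m)$-symmetric consistent cycle.

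I expect the only genuine subtlety to be the passage from the flag-level statements (``one step rotation at a face'' and ``$\alpha_0$ reverses a face'') to the vertex-level cyclic actions $v_i\mapsto v_{i\pm 1}$ and $v_i\mapsto v_{1-i}$. Once Lemma~\ref{face-edge} has turned the face boundary into an actual cycle, this amounts to a routine unwinding of the definition of $i$-adjacency around a face, so no real obstacle remains.
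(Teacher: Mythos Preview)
Your argument is correct and follows exactly the approach the paper takes: the paragraph immediately preceding the lemma records that $\alpha_0\alpha_1$ and $\alpha_0\alpha_{212}$ are one step rotations (hence shunts) at the faces of $\Phi$ and $\Phi^2$, and that $\alpha_0$ reverses both, which is precisely the content of your second and third paragraphs. You supply more detail than the paper (the appeal to Lemma~\ref{face-edge} to ensure the boundary is an honest cycle, and the explicit transport to all faces via transitivity), but the core idea is identical.
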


\begin{corollary}
\label{cor:twoedges} 
Let $\m$ be a map in class $\2$. Then no two faces of $\m$ share two consecutive edges.
\end{corollary}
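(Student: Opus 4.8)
The plan is to argue by contradiction using the flag calculus, exploiting the distinguished generators $\alpha_0$, $\alpha_1$, $\alpha_{212}$. Suppose that two distinct faces $f$ and $f'$ of $\m$ share two consecutive edges. Being consecutive, these edges $e_1$ and $e_2$ meet in a common vertex $v$ and form a corner of $f$ at $v$; set $\Phi = (v,e_1,f)$, so that $\Phi^1 = (v,e_2,f)$. First I would pin down the local picture at $v$. Since $\m$ is in class $\2$, the automorphisms $\alpha_0\alpha_1$ and $\alpha_0\alpha_{212}$ are one-step rotations of the two faces through any flag, so by Lemma~\ref{face-edge} every face is a simple cycle visiting each of its vertices exactly once. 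In particular $f'$ passes through $v$ exactly once and hence has exactly two edges of its boundary at $v$; as $f'$ contains both $e_1$ and $e_2$ and both are incident to $v$, these two boundary edges must be precisely $e_1$ and $e_2$. Thus $e_1$ and $e_2$ form a corner of $f'$ at $v$ as well.

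Next I would compute the relevant adjacent flags. Because each edge lies on exactly two faces, $f$ and $f'$ are the two faces incident to $e_1$ and also the two faces incident to $e_2$. Hence $\Phi^2 = (v,e_1,f')$, and since $e_1,e_2$ form a corner of $f'$ at $v$ we get $\Phi^{2,1} = (v,e_2,f')$; finally, as $f$ is the face incident to $e_2$ other than $f'$, we obtain $\Phi^{2,1,2} = (v,e_2,f) = \Phi^1$. Since $\alpha_1(\Phi)$ and $\alpha_{212}(\Phi)$ both send $\Phi$ to this common flag and the action of $\Aut(\m)$ on $\fl(\m)$ is free, it follows that $\alpha_1(\Phi) = \alpha_{212}(\Phi)$.

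Finally I would derive the contradiction from the vertex stabilizer. The group $\langle \alpha_1, \alpha_{212}\rangle$ is the stabilizer of $v$ and is isomorphic to $D_q$, where $2q$ is the valency; for a map in class $\2$ the valency is even and at least $4$, so $q \geq 2$ and $|D_q| = 2q \geq 4$. But $\alpha_1$ is a reflection, hence an involution, so the equality $\alpha_1 = \alpha_{212}$ would force $\langle \alpha_1,\alpha_{212}\rangle = \langle \alpha_1\rangle \cong \ZZ_2$, contradicting $|D_q| \geq 4$. This contradiction completes the proof. The only delicate point, and the step I would be most careful about, is the local bookkeeping in the first two paragraphs: one must use the simple-cycle property of faces (Lemma~\ref{face-edge}) to conclude that $e_1$ and $e_2$ actually form a \emph{corner} of $f'$ at $v$, rather than merely being two of its edges, since this is exactly what is needed to force $\Phi^{2,1} = (v,e_2,f')$ and hence $\Phi^{2,1,2} = \Phi^1$.
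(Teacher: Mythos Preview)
Your proof is correct, and takes a genuinely different route from the paper's.

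The paper argues via shunts: by $1$-regularity on arcs there is a unique automorphism taking the arc $(u,v)$ to the arc $(v,w)$ (where $e_1 = uv$, $e_2 = vw$), and this automorphism must simultaneously be the one step rotation of $f$ and of $f'$; hence the two boundary cycles coincide. From there the paper concludes that the map would have to be the two-faced reflexible map of type $\{p,2\}$ on the sphere, contradicting that $\m$ is in class $\2$.

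Your argument instead works locally in the flag calculus: from the same corner configuration you deduce $\Phi^{2,1,2}=\Phi^1$, hence $\alpha_1(\Phi)=\alpha_{212}(\Phi)$ by freeness, and then derive the contradiction from the order of the vertex stabilizer $\langle\alpha_1,\alpha_{212}\rangle\cong D_q$. This avoids the global ``identify the degenerate map'' step entirely, at the cost of invoking the description of the vertex stabilizer given just before Lemma~\ref{le:aux}. Both approaches ultimately rest on the same $1$-regularity/freeness, just applied at different places (arcs versus flags).

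One further remark: you are right to flag the ``corner of $f'$'' step as the delicate point, and your use of Lemma~\ref{face-edge} to justify it is appropriate. The paper's proof needs exactly the same fact (so that the one step rotation of $f'$ really does take $(u,v)$ to $(v,w)$) but leaves it implicit. Note that Lemma~\ref{face-edge} carries a hypothesis on the underlying graph (simple, minimum degree $\geq 3$); for maps in class $\2$ the valency is at least $4$, and the paper's standing conventions on graphs cover simplicity, so this is fine in context.
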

 
\begin{proof}
Since $\Aut(\m)$ is $1$-regular on the underlying graph, each face has an unique pair of mutually inverse one step rotations in $\Aut(\m)$. Hence, if two faces share two common consecutive edges the one step rotations for both faces coincide, implying that they share all its vertices and edges. But in this case the map is a reflexible map of type $\{p,2\}$ on the sphere, a contradiction.
\hfill $\Box$
\end{proof}
\medskip

We finish this section with a result that is of great help when dealing with maps in class $\2$ with underlying tetravalent graphs. 

%%%%  the:1-regular
\begin{theorem}
\label{the:1-regular}
Let $\G$ be a tetravalent graph admitting a $1$-regular group of automorphisms $G$. If $\G$ is the underlying graph of a map $\m$ in class $\2$ with $\Aut(\m) = G$ then all orbits of $G$-consistent cycles of $\G$ are $G$-symmetric. Moreover, if $G = \Aut(\G)$ and all orbits of $G$-consistent cycles of $\G$ are $G$-symmetric then for any two orbits of $G$-consistent cycles of $\G$ there exists a map $\m$ in class $\2$ with $\Aut(\m) = G$ and underlying graph $\G$ such that the boundaries of its faces are the members of these two orbits.
\end{theorem}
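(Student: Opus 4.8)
The plan is to prove the two implications separately. For the forward direction, suppose $\G$ is the underlying graph of a map $\m$ in class $\2$ with $\Aut(\m)=G$. By the discussion preceding Lemma~\ref{le:aux}, the group $G$ acts as a $1$-regular group on $\G$, and the vertex stabilizer $\langle \alpha_1, \alpha_{212}\rangle$ is a dihedral group $D_q$. Since $\G$ is tetravalent, the vertex valency is $4$, which forces $q=2$ in the type $\left\{\begin{array}{c} n \\ m\end{array}, 2q\right\}$, so the vertex stabilizer is $D_2 \cong \ZZ_2\times\ZZ_2$, the Klein $4$-group. Then Lemma~\ref{le:tetra_cons} immediately yields that $\G$ has three $G$-orbits of $G$-consistent cycles, all of which are $G$-symmetric. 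This direction is therefore short, relying on identifying the vertex stabilizer shape and invoking the tetravalent consistency lemma.

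**Reverse direction: construction.**

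For the converse, assume $G=\Aut(\G)$ is $1$-regular and all orbits of $G$-consistent cycles are $G$-symmetric; by Proposition~\ref{pro:cons} with $k=4$ there are exactly three such orbits. Given two of these orbits, say $\mathcal{C}_1$ and $\mathcal{C}_2$, I would build a candidate map $\m$ whose faces are precisely the cycles in $\mathcal{C}_1 \cup \mathcal{C}_2$, and then verify it is a map in class $\2$ with $\Aut(\m)=G$. The strategy is to exhibit, for a suitably chosen base flag $\Phi$, the three automorphisms $\alpha_0, \alpha_1, \alpha_{212}$ of the required form, so that Lemma~\ref{le:aux} guarantees $\m$ is either in class $\2$ or reflexible, and then to rule out reflexibility. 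First I would fix a base vertex $v$ with a neighbor $u$ determining an edge $e$, and use the $1$-regularity together with the Klein $4$-group structure of $G_v$ (forced by Lemma~\ref{le:tetra_cons} run in reverse) to set up the flag. For each orbit $\mathcal{C}_i$, Lemma~\ref{le:cons} gives a unique $G$-consistent directed cycle through $e$ in each direction; the $G$-symmetry supplies a shunt and a reflecting involution. I would take $\alpha_0$ to be the involution in $G_e$ reversing $e$ and the two chosen faces, $\alpha_1$ the involution in $G_v$ fixing $e$, and $\alpha_{212}$ the element realizing $\Phi\mapsto\Phi^{2,1,2}$ built from the shunt/reflection data of the second face.

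**Reverse direction: verification and the main obstacle.**

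Having produced $\alpha_0,\alpha_1,\alpha_{212}$ with the correct flag-images, Lemma~\ref{le:aux} tells us $\m$ is either in class $\2$ or reflexible. To exclude reflexibility I would argue that a reflexible map on $\G$ would have $\Aut(\m)$ containing an automorphism sending $\Phi$ to $\Phi^2$, giving a face-transitive action that merges $\mathcal{C}_1$ and $\mathcal{C}_2$ into a single orbit of faces; but $\mathcal{C}_1$ and $\mathcal{C}_2$ are distinct $G$-orbits of consistent cycles and $G=\Aut(\G)$ is already the full automorphism group, so no such extra automorphism exists, forcing $\m$ to be in class $\2$. The identity $\Aut(\m)=G$ then follows since the distinguished generators lie in $G$ and generate $\Aut(\m)$, while $\Aut(\m)\le\Aut(\G)=G$ holds because every map automorphism is a graph automorphism.

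I expect the main obstacle to be the consistency of the construction of $\m$ as a genuine map: one must check that the chosen cycles in $\mathcal{C}_1\cup\mathcal{C}_2$ actually close up into a $2$-cell embedding, meaning that exactly two faces meet along each edge and that the faces around each vertex cyclically alternate between $\mathcal{C}_1$ and $\mathcal{C}_2$ so that the rotation at each vertex is well defined. The delicate point is verifying that the local picture at a vertex (where four edges meet) is covered by exactly the right faces, which amounts to showing that the one step rotation $\alpha_0\alpha_1$ at the $\mathcal{C}_1$-face and $\alpha_0\alpha_{212}$ at the $\mathcal{C}_2$-face interlock correctly under the Klein four-group action of $G_v$; here Corollary~\ref{cor:twoedges} and Lemma~\ref{face-edge} should ensure no face degenerates and no two faces share consecutive edges, so that the barycentric subdivision is well defined and $\m$ is polytopal.
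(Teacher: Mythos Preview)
Your forward direction is correct but takes a different route from the paper. The paper argues directly from Lemma~\ref{facesofhereditarymaps}: the two $\Aut(\m)$-orbits of faces give two distinct $G$-orbits of $G$-symmetric consistent cycles, and then the arithmetic $s+2c=3$ from Proposition~\ref{pro:cons} forces $c=0$. You instead read off that the vertex stabilizer $\langle \alpha_1,\alpha_{212}\rangle\cong D_2$ is the Klein $4$-group and invoke Lemma~\ref{le:tetra_cons}. Both are valid; yours is arguably more direct, while the paper's argument has the advantage of not depending on the structural facts about $\langle\alpha_1,\alpha_{212}\rangle$ gathered earlier.

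For the reverse direction your overall plan matches the paper's, but there is a slip and a place where the paper is more efficient. The slip: your description of $\alpha_1$ as ``the involution in $G_v$ fixing $e$'' cannot be right, since such an element fixes the flag $(v,e,f)$ or sends it to $\Phi^2$, not to $\Phi^1$; what you need is the involution in $G_v$ fixing the \emph{face} $f_1$, equivalently the reflection of $f_1$ through $v$. The paper writes this explicitly as $\alpha_1=\beta\alpha_0$ and $\alpha_{212}=\beta'\alpha_0$, where $\beta,\beta'$ are the shunts of $f_1,f_2$ sending $v$ to $u$ and $\alpha_0$ is the reflection of the $G$-symmetric cycle $f_1$ fixing $e$.

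The efficiency gain in the paper is twofold. First, your ``main obstacle'' (that $\mathcal{C}_1\cup\mathcal{C}_2$ really gives a map) is dispatched in one line via Lemma~\ref{le:cons}: each edge lies in exactly one cycle from each orbit, so every edge has exactly two faces; the local rotation at each vertex then takes care of itself because distinct orbits of consistent cycles necessarily give distinct pairings of the four incident edges (their shunts through a given arc are distinct by $1$-regularity). Second, the paper establishes $\Aut(\m)=G$ \emph{before} applying Lemma~\ref{le:aux}: since the face set is a union of full $G$-orbits, every element of $G=\Aut(\G)$ preserves it, giving $G\le\Aut(\m)\le\Aut(\G)=G$. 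Then non-reflexibility is immediate from $1$-regularity (a reflexible map needs a flag-transitive group of order $8|V|$, not $4|V|$), avoiding your detour through face-transitivity merging the two orbits.
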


\begin{proof}
By Proposition~\ref{pro:cons} the graph $\G$ has three orbits of $G$-consistent directed cycles and at least one of the orbits of $G$-consistent cycles is $G$-symmetric. 

Let us start by assuming that $\G$ is the underlying graph of a map $\m$ in class $\2$ with $\Aut(\m) = G$. By Lemma~\ref{facesofhereditarymaps} the faces of $\m$ are $G$-symmetric consistent cycles, and since $G$ has two orbits on the set of faces of $\m$ this shows that $\G$ has at least two orbits of $G$-symmetric consistent cycles, implying that $\Gamma$ in fact has three orbits of $G$-consistent cycles, all of which are $G$-symmetric. 

For the second part of the theorem suppose $G = \Aut(\G)$ and that all orbits of $G$-consistent cycles are $G$-symmetric. By Proposition~\ref{pro:cons} there are three of them; let ${\cal O}_1$ and ${\cal O}_2$ be any two. We now show that there is a map in class $\2$ having the elements of ${\cal O}_1$ and ${\cal O}_2$ as faces. In fact, by Lemma~\ref{le:cons}, taking all the elements of both ${\cal O}_1$ and ${\cal O}_2$ as faces, we do get a map $\m$ with underlying graph $\G$. We just need to show that this map $\m$ is in class $\2$. Since we have chosen two complete orbits of $G$-consistent cycles as the faces of $\m$, every automorphism of $\G$ sends faces to faces, implying that it is an automorphism of $\m$, and so $\Aut(\G) = G = \Aut(\m)$. Moreover, since $G$ is $1$-regular, the map $\m$ is not reflexible. By Lemma~\ref{le:aux} we thus only need to show that we can send a given flag $\Phi$ to the flags $\Phi^0$, $\Phi^1$ and $\Phi^{2,1,2}$.

Let $\Phi=(v,e,f_1)$ be a flag of $\m$ and let $u$ be the other vertex of $\G$, incident to $e$. Without loss of generality assume that $f_1$ belongs to ${\cal O}_1$. Denote by $f_2$ the unique (confront Lemma~\ref{le:cons}) face from ${\cal O}_2$ containing $e$. First, since $f_1$ is a $G$-symmetric consistent cycle there exists $\alpha_0 \in G$ fixing $f_1$ and $e$ and interchanging $u$ and $v$. We can therefore map $\Phi$ to its $0$-adjacent flag $\Phi^0$. Let $\beta, \beta' \in G$ be the shunts of $f_1$ and $f_2$, respectively, that send $v$ to $u$. Then $\alpha_1 := \beta\alpha_0$ maps $\Phi$ to $\Phi^1$. Finally, $\alpha_{212} := \beta'\alpha_0$ maps $\Phi$ to $\Phi^{2,1,2}$, proving that $\m$ is indeed in class $\2$.
\hfill $\Box$
\end{proof}
\bigskip

Combining together Lemma~\ref{le:tetra_cons} and Theorem~\ref{the:1-regular} we have the following useful corollary.

%%%%  cor:1-regular
\begin{corollary}
\label{cor:1-regular}
Let $\G$ be a tetravalent graph with a $1$-regular group of automorphisms. Then $\G$ is the underlying graph of a map of class $\2$ if and only if the vertex stabilizers in $\Aut(\G)$ are isomorphic to the Klein $4$-group. Moreover, in this case there are three pairwise nonisomorphic such maps. 
\end{corollary}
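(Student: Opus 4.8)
The plan is to prove Corollary~\ref{cor:1-regular} by stringing together the two results it is built from, namely Lemma~\ref{le:tetra_cons} and Theorem~\ref{the:1-regular}. The key observation is that the hypothesis ``$\G$ is tetravalent with a $1$-regular group of automorphisms'' is common to both, so the work reduces to aligning the vertex-stabilizer condition of Lemma~\ref{le:tetra_cons} with the ``all orbits of consistent cycles are symmetric'' condition appearing in Theorem~\ref{the:1-regular}.

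First I would fix notation: let $G = \Aut(\G)$, which is $1$-regular by assumption, and recall from Lemma~\ref{le:tetra_cons} that $\G$ has three $G$-orbits of $G$-consistent cycles, \emph{all} of which are $G$-symmetric, \emph{if and only if} the vertex stabilizers $G_v$ are isomorphic to the Klein $4$-group $\ZZ_2 \times \ZZ_2$. This is the pivot condition. For the forward direction of the corollary, I would assume $\G$ underlies a map $\m$ in class $\2$ with $\Aut(\m) = \Aut(\G) = G$; then the first part of Theorem~\ref{the:1-regular} gives that all orbits of $G$-consistent cycles are $G$-symmetric, and Lemma~\ref{le:tetra_cons} immediately forces $G_v \cong \ZZ_2 \times \ZZ_2$. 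One subtlety to address here is that the corollary says ``underlying graph of a map of class $\2$'' without explicitly naming the automorphism group, so I should remark that since $\m$ is in class $\2$ the group $\Aut(\m)$ acts $1$-regularly on $\G$ (as established in the discussion preceding Lemma~\ref{le:aux}), and $G=\Aut(\G)$ is the natural candidate; I would note that a map in class $\2$ on $\G$ has $\Aut(\m) \le \Aut(\G)$ acting $1$-regularly, so $\Aut(\m)$ equals the $1$-regular group $G$.

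For the converse, I would assume $G_v \cong \ZZ_2 \times \ZZ_2$. By Lemma~\ref{le:tetra_cons} all three $G$-orbits of consistent cycles are then $G$-symmetric, and since $G = \Aut(\G)$ is $1$-regular the second part of Theorem~\ref{the:1-regular} applies verbatim: choosing any two of the three orbits ${\cal O}_1, {\cal O}_2$ as the face-sets produces a map $\m$ in class $\2$ with $\Aut(\m) = G$ and underlying graph $\G$. This settles the equivalence. For the ``moreover'' claim, I would count: there are exactly three orbits of $G$-consistent cycles, hence $\binom{3}{2} = 3$ unordered pairs, each giving a map of class $\2$ by the construction in Theorem~\ref{the:1-regular}. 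To conclude these three maps are pairwise nonisomorphic, I would argue that an isomorphism of maps is in particular a graph automorphism lying in $G=\Aut(\G)$, and since the faces of each constructed map are precisely two full $G$-orbits of consistent cycles, any element of $G$ permutes these orbits among themselves; thus the unordered pair $\{{\cal O}_1,{\cal O}_2\}$ of boundary orbits is an isomorphism invariant, and three distinct pairs give three nonisomorphic maps.

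The main obstacle I anticipate is the nonisomorphism count rather than the equivalence itself, since the equivalence is a near-immediate concatenation of the two cited results. The delicate point is ruling out that two different choices of orbit-pairs could yield isomorphic maps via some automorphism, and that the third (omitted) orbit does not secretly coincide in its combinatorial role; I would handle this by emphasizing that each $G$-orbit of $G$-symmetric consistent cycles is a single $\Aut(\G)$-invariant set of cycles, so the face structure determines the pair of orbits uniquely and distinct pairs cannot be related by any graph automorphism. A secondary point worth a sentence is confirming that the three constructed maps genuinely have distinct face-orbit pairs — this is guaranteed because the three orbits of consistent cycles are genuinely distinct as sets of cycles, a fact implicit in Proposition~\ref{pro:cons} counting them as three separate orbits.
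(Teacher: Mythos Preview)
Your proposal is correct and follows exactly the approach the paper indicates: the paper does not give a proof but simply states that the corollary follows by combining Lemma~\ref{le:tetra_cons} and Theorem~\ref{the:1-regular}, and your write-up supplies precisely those details. One minor sharpening: when you argue nonisomorphism, the relevant fact is not merely that elements of $G=\Aut(\G)$ \emph{permute} the three orbits of consistent cycles but that each such element \emph{fixes} every $G$-orbit setwise (since they are $G$-orbits); this is what makes the unordered pair of face-orbits a genuine invariant, and it is worth saying explicitly.
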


%%%%%%%%%%%%%%%%%%%%%%%%%%
%%%        sec:2_01onRW        %%%%%
%%%%%%%%%%%%%%%%%%%%%%%%%%

\section{Maps of class $\2$ with underlying Rose Window graphs}
\label{sec:2_01onRW}

As announced in the previous section we now classify all maps of class~$\2$ whose underlying graph is a Rose Window graph. Since the chiral maps underlying Rose Window graphs have already been classified in~\cite{KovKutRuf10} the remarks from the previous section imply that this completes the classification of all arc-transitive maps corresponding to Rose Window graphs.
We analyze each of the four subfamilies from Proposition~\ref{pro:RW} in a separate subsection.

%%
%%%        subsec:RW_i        %%%%%
%%

\subsection{Family (i)}
\label{subsec:RW_i}

We start by considering the family of graphs $R_n(2,1)$, $n \geq 3$. The graph $R_n(2,1)$ is isomorphic to the lexicographic product $C_n[2K_1]$ of a cycle of length $n$ with two independent vertices and is known also as the wreath graph. These graphs appear in various investigations of symmetries of graphs and have thus been studied in great detail before (see for instance~\cite{PraXu89} where certain generalizations, now know as the Praeger-Xu graphs, have been introduced and their automorphism groups determined) . 

Throughout this subsection let $\G = R_n(2,1)$. For convenience we relabel the vertices of $\G$ in the following way. For each $i \in \ZZ_n$ we let $u_i = x_i$ and $v_i = y_{i-1}$. With this notation each pair of vertices $u_i$ and $v_i$ have the same neighborhood $\{u_{i\pm1}, v_{i\pm1}\}$. The permutations $\rho$ and $\mu$ from (\ref{eq:rhomu}) thus map in such a way that $u_i\rho = u_{i+1}$, $v_i\rho = v_{i+1}$, $u_i\mu = u_{-i}$ and $v_i\mu = v_{-i}$ for all $i \in \ZZ_n$. For each $i \in \ZZ_n$ let $\sigma_i$ be the involution interchanging $u_i$ and $v_i$ and fixing all other vertices. Clearly $\sigma_i = \rho^{-i}\sigma_0\rho^i$  and $\sigma_i\sigma_j = \sigma_j\sigma_i$ hold for all $i,j \in \ZZ_n$. Moreover, it is well known that, unless $n = 4$ in which case $\G \cong K_{4,4}$, the 2-element sets $\{u_i,v_i\}$ are blocks of imprimitivity for $\Aut(\G) = \langle \rho, \mu, \sigma_0\rangle$ which is thus of order $n2^{n+1}$ with $N = \langle \sigma_0, \sigma_1, \ldots , \sigma_{n-1}\rangle \triangleleft \Aut(\G)$.
\medskip

Since $R_4(2,1) \cong K_{4,4}$ is a bit special, we deal with it separately. Using a suitable computer package it is easy to see that there is exactly one map of class $\2$ with $K_{4,4}$ as its underlying graph; the faces are of lengths $4$ and $8$.
For the rest of this subsection we thus assume $n \neq 4$. Our approach in determining all maps of class~$\2$ on $\G$ is similar to the one taken in~\cite{KovKutRuf10}. We first determine the structure of potential $1$-regular subgroups of $\Aut(\G)$ which could be the automorphism groups of such maps.

Suppose then that $\m$ is a class~$\2$ map with the underlying graph $\G$ and let $T = N \cap \Aut(\m)$. Clearly, each $\sigma \in N$ can uniquely be expressed as $\sigma = \Pi_{j=0}^{n-1}\sigma_j^{i_j}$, where $i_j \in \{0,1\}$, and so we can denote each such $\sigma$ with the corresponding $n$-tuple $(i_0, i_1, \ldots , i_{n-1})$. 

\begin{lemma}
\label{le:T(i)}
We either have 
\begin{equation}
\label{eq:T1}
	\begin{array}{c} T = \{(0,0,\ldots , 0), (0,1,1,0,1,1,\ldots , 0,1,1),\\
	 (1,0,1,1,0,1,\ldots , 1,0,1), (1,1,0,1,1,0,\ldots , 1,1,0)\},\end{array}
\end{equation}
in which case $3\mid n$, or
\begin{equation}
\label{eq:T2}
	\begin{array}{c} T = \{(0,0,\ldots , 0), (0,1,0,1,\ldots , 0,1),\\
	 (1,0,1,0,\ldots , 1,0), (1,1,\ldots , 1)\},\end{array}
\end{equation}
in which case $2\mid n$. In particular, $\gcd(n,6) \neq 1$.
\end{lemma}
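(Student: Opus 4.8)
The plan is to work inside $G=\Aut(\m)$, which, being the automorphism group of a class $\2$ map on a tetravalent graph, is $1$-regular with vertex stabilisers isomorphic to the Klein four-group (here $q=2$, so $\langle\alpha_1,\alpha_{212}\rangle\cong D_2$); in particular $|G|=8n$ and $|G_v|=4$. Since the partition into blocks $B_i=\{u_i,v_i\}$ is $\Aut(\G)$-invariant (recall $n\neq4$), $G$ preserves it, and I would pass to the quotient $\bar G\le\Aut(\G)/N\cong D_n$ induced on the blocks; vertex-transitivity of $G$ makes $\bar G$ transitive on $\ZZ_n$. By construction $T=N\cap G=\ker(G\to\bar G)$, so $G/T\cong\bar G$, and, identifying $N\cong\mathbb{F}_2^{\,n}$ via the $n$-tuple notation, $T$ is a $\bar G$-invariant $\mathbb{F}_2$-subspace on which $\bar\rho$ acts by the cyclic shift of coordinates (from $\rho\sigma_i\rho^{-1}=\sigma_{i+1}$) and $\bar\mu$ by $i\mapsto-i$.

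The first real task is to show $|T|=4$. On one hand $\bar G\le D_n$ gives $|T|=8n/|\bar G|\ge4$, so $T\neq0$; since the support of $T$ is $\bar G$-invariant and $\bar G$ is transitive, $T$ has full support, whence every coordinate evaluation $T\to\mathbb{F}_2$ is onto and $|T\cap G_{u_0}|=|\{\tau\in T:\tau_0=0\}|=|T|/2$. On the other hand I would argue $T\cap G_{u_0}\subsetneq G_{u_0}$: since $G$ is $1$-regular, $G_{u_0}$ acts regularly on the four neighbours of $u_0$, so some element sends $u_1$ to $u_{-1}$; but elements of $T\le N$ fix every block setwise and so cannot move a vertex of $B_1$ into $B_{-1}$ (note $B_1\neq B_{-1}$ as $n\ge3$), hence this element lies outside $T$. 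Thus $|T\cap G_{u_0}|\le2$, giving $|T|\le4$ and therefore $|T|=4$ and $\bar G=D_n$.

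It remains to classify the $2$-dimensional shift-invariant subspaces of $\mathbb{F}_2^{\,n}$, and here I would exploit that $\bar\rho$ restricts to an element of $GL(T)\cong GL_2(\mathbb{F}_2)\cong\Sym(3)$ acting on the three nonzero vectors of $T$, of order $1$, $2$ or $3$. Order $1$ is impossible, since the only shift-fixed vectors are $0$ and the all-ones vector $\mathbf1$, spanning a space of dimension $<2$. If $\bar\rho|_T$ has order $2$ it fixes exactly one nonzero vector, necessarily $\mathbf1$, and swaps the other two $\mathbf t,\mathbf t+\mathbf1$; then $\mathbf t_{i}=\mathbf t_{i-1}+1$ for all $i$, so $\mathbf t$ alternates, which closes up around $\ZZ_n$ only if $2\mid n$, and one recovers $T$ as in (\ref{eq:T2}). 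If $\bar\rho|_T$ has order $3$, then $\bar\rho^{3}$ fixes all of $T$, so every vector of $T$ has period $3$; writing such a vector by its repeated block $(p_0,p_1,p_2)$, the three cyclic shifts are the nonzero vectors of $T$ and sum to $0$, forcing $p_0+p_1+p_2=0$, i.e. the weight-$2$ patterns, which require $3\mid n$ and give exactly (\ref{eq:T1}). In either case $2\mid n$ or $3\mid n$, so $\gcd(n,6)\neq1$, as claimed.

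The main obstacle is the pinning down of $\dim T=2$: both inequalities are essential, and the upper bound rests on the somewhat delicate observation that an element of $N$ cannot realise the neighbour permutation $u_1\mapsto u_{-1}$ forced by the regular action of $G_{u_0}$, combined with the full-support computation that converts $|T\cap G_{u_0}|\le2$ into $|T|\le4$. Once $\dim T=2$ is secured, the $\Sym(3)$-order trichotomy disposes of the classification cleanly.
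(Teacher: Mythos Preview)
Your proof is correct and takes a genuinely different route from the paper's.

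The paper argues $T\cong\ZZ_2\times\ZZ_2$ in one line (``it easily follows''), then shows $T^{\rho}=T=T^{\mu}$ from normality, observes that $1$-regularity forbids two consecutive zeros in any nontrivial element of $T$, picks an explicit element $t_1=(0,1,i_2,\ldots,i_{n-2},1)$, and does a short case split on whether $t_2^{\rho}=t_3$ or $t_2^{\rho}=t_1$ to obtain the two patterns directly. Your argument instead \emph{proves} $|T|=4$ carefully (via the lower bound $|T|\ge 8n/|D_n|$ and the upper bound coming from the full-support/coordinate-evaluation trick together with $T\cap G_{u_0}\subsetneq G_{u_0}$), and then replaces the hands-on case analysis by the structural observation that the cyclic shift acts on the $2$-dimensional $\mathbb{F}_2$-space $T$ as an element of $GL_2(\mathbb{F}_2)\cong\Sym(3)$, whose order ($1$, $2$, or $3$) immediately forces (\ref{eq:T2}) or (\ref{eq:T1}). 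Your approach is cleaner conceptually and fills a gap the paper leaves (the size of $T$); it also shows, as a byproduct, that $\mu$-invariance and the ``no two consecutive zeros'' condition are not actually needed for the classification, since shift-invariance alone pins down the two subspaces. The paper's approach, on the other hand, is more concrete and its ``no consecutive zeros'' observation is a template reused in the analogous Lemma for Family~(ii).

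One small point of exposition: in your order-$3$ case you write the vector as a repeated block $(p_0,p_1,p_2)$ before having established $3\mid n$. The argument is still sound --- if $3\nmid n$ then $t_{j+3}=t_j$ forces $\mathbf t$ constant, so $T\subseteq\{0,\mathbf1\}$, contradicting $\dim T=2$ --- but it would read more smoothly to state this divisibility first.
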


\begin{proof}
Since $\Aut(\m)$ is $1$-regular it easily follows that $T \cong \ZZ_2 \times \ZZ_2$. Let $T=\{1, t_1,t_2,t_3\}$. The subgroup $N$ is normal in $\Aut(\G)$, implying that $T$ is normal in $\Aut(\m)$. Since $\Aut(\m)$ is arc-transitive on $\G$ and the sets $\{u_i, v_i\}$ are blocks of imprimitivity for $\Aut(\m)$, the group $\Aut(\m)$ contains elements of the form $\rho \sigma$ and  $ \mu \sigma'$, where $\sigma, \sigma' \in N$. Since $T$ is normal in $\Aut(\m)$ and $N$ is abelian we thus get $T = T^{\sigma \rho^{-1}} = T^{ \rho^{-1}}$ and  $T = T^{\sigma'\mu^{-1}} = T^{ \mu^{-1}}$, implying that $T^{\rho} = T = T^{\mu}$. 

Now, $1$-regularity of $\Aut(\m)$ implies that for any $1 \neq t = (i_0, i_1, \ldots , i_{n-1}) \in T$ we cannot have $i_j = i_{j+1} = 0$ for any $j \in \ZZ_n$. We can thus assume that $t_1=(0,1,i_2, i_3, \ldots, i_{n-3}, i_{n-2}, 1)$. Then $t_1^{\rho} = \rho^{-1}t_1\rho = (1,0,1,i_2,i_3, \ldots,i_{n-3}, i_{n-2}) \neq t_1$. We can assume $t_1^{\rho} = t_2$. Now, $t_2^{\rho}= (i_{n-2},1,0,1,i_2, i_3, \ldots,i_{n-3}) \neq t_2$, and so we either have $t_2^{\rho} = t_3$ (in which case $t_3^\rho = t_1$) or $t_2^{\rho}=t_1$ (in which case $t_3^\rho = t_3$). It is now clear that in the first case $n$ is divisible by $3$ and $T$ is as in (\ref{eq:T1}) and in the second case $n$ is even and $T$ is as in (\ref{eq:T2}).
\hfill $\Box$
\end{proof}
\bigskip

We can now analyze the different possibilities for the faces of our map $\m$. By Proposition~\ref{pro:cons} the graph $\G$ has three orbits of $\Aut(\G)$-consistent cycles. The representatives of the orbits are $(u_0,u_1,v_0,v_1)$, $(u_0,u_1, \ldots , u_{n-1})$ and $(u_0,u_1, \ldots , u_{n-1}, v_0, v_1, \ldots , v_{n-1})$, with shunts $\sigma_1\mu\rho, \rho$ and $\rho\sigma_0$, respectively. Thus, all the $\Aut(\G)$-consistent cycles are clearly $\Aut(\G)$-symmetric. By Lemma~\ref{facesofhereditarymaps} the possible face lengths for $\m$ are $4$, $n$ and $2n$. Moreover, the following holds.

\begin{lemma}
\label{le:lengths}
The map $\m$ has faces of two different lengths.
\end{lemma}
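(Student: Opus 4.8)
The plan is to argue by contradiction, assuming the two face-orbits of $\m$ have the same length $\ell$. By Lemma~\ref{facesofhereditarymaps} the faces are $\Aut(\m)$-symmetric consistent cycles, so $\ell\in\{4,n,2n\}$, and since $\Aut(\m)$ is $1$-regular every edge lies on exactly one face from each orbit. I would first eliminate $\ell=4$ by counting: the sum of all face-lengths equals twice the number of edges, namely $8n$, so equal $4$-gonal faces would require $2n$ distinct consistent $4$-cycles; but the $\Aut(\G)$-orbit with representative $(u_0,u_1,v_0,v_1)$ has only $n$ members, and these are the only consistent $4$-cycles. This settles $\ell=4$.

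The substance lies in $\ell\in\{n,2n\}$, and here I would use the explicit description of these cycles. Each consistent $n$-cycle is a transversal cycle $C_\epsilon=(w_0,\dots,w_{n-1})$ with $w_i\in\{u_i,v_i\}$, encoded by $\epsilon\in\ZZ_2^n$ (say $\epsilon_i=0$ iff $w_i=u_i$), and its natural shunt is $\rho\tau_\epsilon$ where $\tau_\epsilon=\prod_j\sigma_j^{\epsilon_j-\epsilon_{j-1}}$. One checks that $\tau_\epsilon$ always has even weight, and conversely that for $\tau\in N$ the element $\rho\tau$ has order $n$ (hence shunts a transversal $n$-cycle) when $\tau$ is of even weight, and order $2n$ (hence shunts a Hamiltonian $2n$-cycle, as for the representative with shunt $\rho\sigma_0$) when $\tau$ is of odd weight. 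Since the $\rho$-projecting elements of $\Aut(\m)$ form the coset $\rho\sigma_a T$ for the element $\rho\sigma_a\in\Aut(\m)$ provided by arc-transitivity, the $\Aut(\m)$-consistent $n$-cycles (resp. $2n$-cycles) are exactly the $C_\epsilon$ with $\tau_\epsilon$ in the even-weight (resp. odd-weight) part of $\sigma_a T$. A short orbit count shows there are at most two orbits of each of these lengths; consequently, if both faces have length $\ell\in\{n,2n\}$ then $\sigma_aT$ is all of one parity, which (as $0\in T$) forces $T$ to consist entirely of even-weight elements, and the two face-orbits exhaust all $\Aut(\m)$-consistent $\ell$-cycles.

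The decisive step is to exhibit a graph automorphism that preserves the faces of $\m$ but does not lie in $\Aut(\m)$, contradicting $\Aut(\m)=\langle\alpha_0,\alpha_1,\alpha_{212}\rangle$. Writing $D\colon\ZZ_2^n\to\ZZ_2^n$ for the linear map $\epsilon\mapsto\tau_\epsilon$, whose kernel is $\{\mathbf 0,\mathbf 1\}$ and whose image is the even-weight subgroup, the conclusion $T\subseteq\mathrm{im}(D)$ makes $D^{-1}(T)$ a subgroup of $N$ of order $2|T|=8$ strictly containing $T=N\cap\Aut(\m)$. I would pick $\delta\in D^{-1}(T)\setminus T$ and set $\gamma=\prod_j\sigma_j^{\delta_j}\in N$. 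Then $\gamma\notin\Aut(\m)$, while a direct shunt computation, using $\rho^{-1}\gamma\rho$-type relations together with $D\delta\in T$, shows that conjugating the shunt $\rho\tau$ of any face by $\gamma$ yields $\rho(\tau+D\delta)$ with $\tau+D\delta\in\sigma_aT$; hence $\gamma$ carries each face to a consistent cycle of the same length whose shunt again lies in $\rho\sigma_aT\subseteq\Aut(\m)$, i.e. to another face. Thus $\gamma$ permutes the faces of $\m$ and is a map automorphism outside $\Aut(\m)$, the required contradiction, which completes all cases.

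I expect the main obstacle to be precisely the equal-length cases $\ell\in\{n,2n\}$: a bare count of consistent cycles does not suffice, because two orbits of length $n$ (or of length $2n$) genuinely occur for suitable $1$-regular $G$ — already the octahedron $R_3(2,1)$ has consistent-cycle orbits of lengths $\{4,3,3\}$ (or $\{4,6,6\}$), the two triangle-orbits assembling into the reflexible octahedron rather than a class~$\2$ map. The real content, and the delicate point to verify carefully, is that the translation $\gamma$ by an element of $D^{-1}(T)\setminus T$ truly stabilises the whole face set in both sub-cases, keeping in mind that on the Hamiltonian faces $\gamma$ in fact fixes each face individually (the shunt computation gives $\gamma$ commuting with the order-$2n$ shunt) rather than merely permuting the orbit.
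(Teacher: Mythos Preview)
Your argument is correct, but it takes a substantially more elaborate route than the paper's.

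The paper's proof exploits two simple structural features of $N=\langle\sigma_0,\dots,\sigma_{n-1}\rangle$: it is abelian, and every element is an involution. For $\ell=n$ (the $2n$ case is identical), the paper just observes that any two transversal $n$-cycles differ by some $\sigma\in N$; if $f$ and $f'$ lie in the two face-orbits $\mathcal O_1=fT$ and $\mathcal O_2=f'T$, then $f'=f\sigma$ gives $\mathcal O_1\sigma=fT\sigma=f\sigma T=\mathcal O_2$, and since $\sigma^2=1$ this already forces $\mathcal O_2\sigma=\mathcal O_1$. Thus $\sigma$ swaps the two face-orbits, hence lies in $\Aut(\m)\cap N=T$, and $\mathcal O_1=\mathcal O_2$, a contradiction. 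No encoding, no parity analysis, no linear map $D$ is needed.

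Your approach recovers the same endgame (an element of $N\setminus T$ permuting the faces), but you manufacture the element via the linear-algebraic description $D\colon\epsilon\mapsto\tau_\epsilon$ and a parity argument forcing $T\subseteq\mathrm{im}(D)$. This is sound and yields finer information (e.g.\ the even-weight constraint on $T$), but none of it is required for the lemma. Two small remarks: your assertion that $D^{-1}(T)$ \emph{contains} $T$ would need $D(T)\subseteq T$, which you do not justify; fortunately you only use $D^{-1}(T)\setminus T\neq\emptyset$, and that follows from $|D^{-1}(T)|=8>4=|T|$ alone. Also, your closing parenthetical that $\gamma$ fixes each Hamiltonian face because it ``commutes with the order-$2n$ shunt'' is not right---you computed $\gamma^{-1}(\rho\tau)\gamma=\rho(\tau+D\delta)$, which is a \emph{different} shunt unless $D\delta=0$; what matters (and what your computation does show) is that $\gamma$ permutes the set of $\Aut(\m)$-consistent $2n$-cycles, which suffices.
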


\begin{proof}
Recall that $\m$, being a map of class $\2$, admits a one-step rotation around each of its faces. Thus, by Lemma~\ref{face-edge}, each edge is on the boundary of two different faces of $\m$. Since $(u_i,u_{i+1},v_i,v_{i+1})$ is clearly the only $\Aut(\G)$-consistent $4$-cycle containing any of the corresponding four edges (recall that $n \neq 4$), it is clear that each edge of $\G$ lies on at least one face of length greater than $4$. 

Suppose $f$ is a face of $\m$ of length $n$. Since $|T|=4$ and $f$ clearly has exactly one vertex of each block of imprimitivity $B_i = \{u_i, v_i\}$ the set ${\cal O}_1=fT=\{ft\mid t\in T\}$ is the orbit of $f$ under the action of $\Aut(\m)$. By way of contradiction suppose that the other $\Aut(\m)$-orbit of faces of $\m$ also consists of faces of length $n$ and let $f'$ be any face from the second orbit ${\cal O}_2 = f'T$. Since $f'$ corresponds to an $\Aut(\G)$-consistent cycle it also contains exactly one vertex from each of the blocks $B_i$, and so there exists $\sigma \in N$ such that $f' = f\sigma$. But as $N$ is abelian we get ${\cal O}_2 = f'T = f\sigma T = fT\sigma = {\cal O}_1\sigma$, and so $\sigma$ interchanges the two orbits of faces of $\m$, implying that it is in fact an automorphism of $\m$. But then $\sigma \in T$, and so ${\cal O}_2 ={\cal O}_1\sigma = {\cal O}_1$, a contradiction. 

A similar argument shows that $\m$ also cannot have all faces of length $2n$.
\hfill $\Box$
\end{proof}
\bigskip

The analysis of possible maps $\m$ of class $\2$ whose underlying graph is $\G$ is now straightforward. By Lemma~\ref{le:lengths} we either have an orbit of faces of length $n$ (and either an orbit of faces of length $4$ or faces of length $2n$) or one orbit of faces of length $2n$ and one orbit of faces of length $4$. Moreover,  Corollary~\ref{cor:twoedges} implies that the map is completely determined once we have chosen one orbit of faces of length $n$ or $2n$ and decided on the length of the faces from the other orbit. Next, in view of the action of $\Aut(\G)$ and the remarks from the paragraph preceding Lemma~\ref{le:lengths} we can assume that, in the case that $\m$ has faces of length $n$, one of them is $f = (u_0,u_1, \ldots , u_{n-1})$ while in the case it does not have faces of length $n$ one of the faces of length $2n$ is $f = (u_0,u_1, \ldots, u_{n-1},v_0,v_1, \ldots,v_{n-1})$. The corresponding $\Aut(\m)$ orbit of $f$ is then completely determined by the action of $T$, which by Lemma~\ref{le:T(i)} is also known (up to the two possibilities). Once the faces have been determined one only needs to check that we indeed have the required automorphisms of the map for Lemma~\ref{le:aux} to apply. Note that, since the faces are of two different lengths, the obtained map cannot be reflexible, and is thus automatically of class $\2$.

\begin{theorem}
\label{th:RW(i)}
Let $\G = R_n(2,1)$ be a Rose Window graph with $n \geq 3$. Then $\G$ is the underlying graph of a map $\m$ of class $\2$ if and only if $\gcd(n,6) \neq 1$. Moreover, letting $n_0 \in \{0,2,3,4,6,8,9,10\}$ be the residue of $n$ modulo $12$ the following holds:
\begin{itemize}\itemsep = 0pt
\item[(i)] if $n = 4$, then $\G$ is the underlying graph of exactly one map of class~$\2$ with face lengths $4$ and $8$.
\item[(ii)] if $n_0 \in \{3,9\}$, then $\G$ is the underlying graph of a unique map in class $\2$; the faces are of lengths $4$ and $n$.
\item[(iii)] if $n_0 \in \{4,8\}$, then $\G$ is the underlying graph of two nonisomorphic maps of class~$\2$; one has faces of lengths $4$ and $n$ and the other has faces of lengths $4$ and $2n$.
\item[(iv)] if $n_0 \in \{2,10\}$, then $\G$ is the underlying graph of three nonisomorphic maps of class~$\2$; one has faces of lengths $4$ and $n$, one has faces of lengths $4$ and $2n$, and one has faces of lengths $n$ and $2n$.
\item[(v)] if $n_0 =0$, then $\G$ is the underlying graph of three nonisomorphic maps of class~$\2$; two have faces of lengths $4$ and $n$, and one has faces of lengths $4$ and $2n$.%%%
\item[(vi)] if $n_0=6$, then $\G$ is the underlying graph of four nonisomorphic maps of class~$\2$; two have faces of lengths $4$ and $n$, one has faces of lengths $4$ and $2n$, and one has faces of lengths $n$ and $2n$.
\end{itemize}
\end{theorem}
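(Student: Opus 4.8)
Throughout write $G=\Aut(\m)$, which by the discussion preceding the statement is a $1$-regular subgroup of $\Aut(\G)$, and set $T=N\cap G$. By Lemma~\ref{le:T(i)}, $T$ is of one of the two types listed there; I call them type T1 (forcing $3\mid n$) and type T2 (forcing $2\mid n$). In particular the mere existence of $\m$ already gives $3\mid n$ or $2\mid n$, i.e. $\gcd(n,6)\neq1$, which is the ``only if'' direction. It therefore remains to construct the maps when $\gcd(n,6)\neq1$ and to enumerate them up to isomorphism; the case $n=4$ (where $\G\cong K_{4,4}$ carries extra automorphisms and the three length pairs collapse) is treated separately, as indicated above.

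For the construction I would, for each admissible $T$, run through the three face-length pairs $\{4,n\}$, $\{4,2n\}$, $\{n,2n\}$ permitted by Lemma~\ref{le:lengths} and try to exhibit the automorphisms $\alpha_0,\alpha_1,\alpha_{212}$ needed for Lemma~\ref{le:aux} (the two lengths being distinct, the resulting map is automatically of class~$\2$). The guiding principle is that $G$ must contain a shunt of each chosen face-orbit while meeting $N$ in exactly $T$. Since consecutive vertices of any consistent cycle lie in consecutive blocks $\{u_i,v_i\}$, every shunt projects onto $\rho$ in $\Aut(\G)/N\cong D_n$ and hence has the form $\rho\sigma$ with $\sigma\in N$, and the constraint $N\cap G=T$ forces any power of such a shunt landing in $N$ to lie in $T$. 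Normalising an $n$-face to the rim cycle $(u_0,\dots,u_{n-1})$, whose unique shunt is $\rho$, an $n$-face forces $\rho\in G$. Using $(\rho\sigma_0)^k=\rho^k\sigma_0\sigma_1\cdots\sigma_{k-1}$ one finds $(\rho\sigma_0)^n=\sigma_0\sigma_1\cdots\sigma_{n-1}$ (the all-ones element), so the standard $2n$-shunt $\rho\sigma_0$ is admissible precisely when the all-ones element lies in $T$, i.e. for type T2 but not T1; this already yields the pair $\{4,2n\}$ for every even $n$ and rules out all $2n$-faces for type T1, leaving only $\{4,n\}$ there. For the pair $\{n,2n\}$ one needs $\rho\in G$ as well, hence a $2n$-cycle shunt of the form $\rho\sigma$ with $\sigma\in T$; computing the orbit of $\rho t_1$ (with $t_1$ the alternating pattern of T2) shows it has length $2n$ exactly when $n\equiv2\pmod4$ and length $n$ otherwise, so $\{n,2n\}$ is realised by T2 if and only if $n\equiv2\pmod4$. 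I expect this order computation, together with the verification that the generators so obtained meet $N$ in exactly $T$ (which is what secures $1$-regularity), to be the main obstacle of the proof.

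Granting the above, the count by $n_0=n\bmod12$ is bookkeeping. Type T1 is present iff $3\mid n$ and contributes the single map $\{4,n\}$; type T2 is present iff $2\mid n$ and contributes $\{4,n\}$ and $\{4,2n\}$ always, and $\{n,2n\}$ exactly when $n\equiv2\pmod4$. Intersecting the congruences modulo $3$, $2$ and $4$ (whose least common multiple is $12$) partitions the admissible residues $\{0,2,3,4,6,8,9,10\}$ into the six listed cases with the stated pairs and multiplicities. In each case one still checks that the two chosen orbits tile $\G$, i.e. that every edge lies on exactly one face from each orbit; this follows from Lemma~\ref{le:cons}, from the fact that the $G$-orbit of the distinguished long face $f$ coincides with $fT$ (as anticipated before the statement), and from a direct count (the $n$ cycles $(u_i,u_{i+1},v_i,v_{i+1})$ cover each edge once, and likewise for the rim and Hamiltonian orbits).

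It remains to see that the maps produced are pairwise non-isomorphic. Any isomorphism of maps induces an automorphism $\phi$ of $\G$ carrying faces to faces, hence preserving the multiset of face lengths; this already separates all of them except the two maps of type $\{4,n\}$ occurring together when $6\mid n$ (cases $n_0\in\{0,6\}$), one from T1 and one from T2. For these, $\phi$ normalises $N$ (as $N\trianglelefteq\Aut(\G)$) and so conjugates $N\cap\Aut(\m)$ to $N\cap\Aut(\m')$; but the all-ones element of $N$ is fixed by the conjugation action of all of $\Aut(\G)$ on $N$, hence lies in every conjugate of T2 and in no conjugate of T1. Thus T1 and T2 are non-conjugate and the two $\{4,n\}$ maps are non-isomorphic, which completes the classification.
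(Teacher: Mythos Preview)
Your overall strategy coincides with the paper's: reduce to the two possibilities for $T$ via Lemma~\ref{le:T(i)}, determine for each $T$ which face-length pairs are realisable, verify the maps via Lemma~\ref{le:aux}, and count modulo $12$. Two tactical choices differ. First, to decide which second orbit is compatible with a given $T$, the paper traces the face adjacent to the rim along $u_0u_1$ using Corollary~\ref{cor:twoedges} and Lemma~\ref{le:lengths}; you instead compute shunt orders, observing that any $2n$-shunt $\rho\sigma$ has $(\rho\sigma)^n$ equal to the all-ones element of $N$, so a $2n$-face forces this element into $T$ (hence $T$ of type~(\ref{eq:T2})), and that among $\rho t$ with $t\in T$ of type~(\ref{eq:T2}) a $2n$-cycle arises exactly when $n\equiv 2\pmod 4$. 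This is a clean group-theoretic substitute for the paper's face-tracing. Second, for distinguishing the two $\{4,n\}$ maps when $6\mid n$, the paper uses a combinatorial invariant (in the type~(\ref{eq:T1}) map each $n$-face meets all three others, in the type~(\ref{eq:T2}) map it does not), whereas you note that the all-ones element is fixed under $\Aut(\G)$-conjugation on $N$ and therefore lies in every conjugate of the type~(\ref{eq:T2}) subgroup $T$ but in no conjugate of the type~(\ref{eq:T1}) one; this is an elegant alternative. Both routes yield the same enumeration. Your sketch is sound, but, as you yourself flag, the explicit exhibition of $\alpha_0,\alpha_1,\alpha_{212}$ in each case (which the paper carries out) still needs to be written down to complete the argument.
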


\begin{proof}
The case $n = 4$ has been dealt with at the beginning of this section. For the rest of the proof we thus assume $n \neq 4$. By Lemma~\ref{le:T(i)} at least one of $2$ and $3$ must divide $n$. We now consider the possibilities for the combinations of the lengths of faces of $\m$. We first analyze the possibility that $\m$ has faces of length $n$. Recall that we can assume that one of the $n$-faces is $f = (u_0,u_1, \ldots , u_{n-1})$. We now separate the argument for the two possibilities regarding the subgroup $T$. 

Suppose first that $T$ is as in (\ref{eq:T1}) and recall that in this case $3$ divides $n$. The four faces of length $n$ are then (see Figure~\ref{fig:family1-nT1}):
$$ 
\begin{array}{rcl} f &=& (u_0, u_1, \ldots , u_{n-1}), \\
	ft_1 &= &(u_0, v_1, v_2, u_3, v_4, v_5, \ldots , u_{n-3}, v_{n-2}, v_{n-1}),\\
ft_2 &=& (v_0, u_1, v_2, v_3, u_4, v_5, \ldots , v_{n-3}, u_{n-2}, v_{n-1})\ \text{and} \\
ft_3 &=& (v_0, v_1, u_2, v_3, v_4, u_5, \ldots , v_{n-3}, v_{n-2}, u_{n-1}).\end{array}
$$\begin{figure}[!ht]
  \centering
  \includegraphics[scale =0.32]{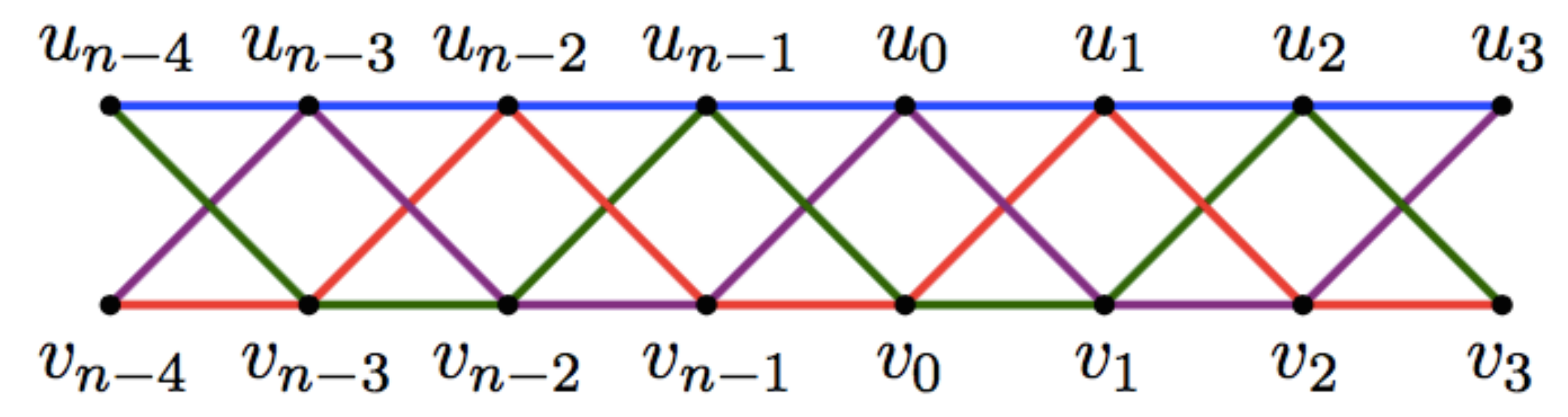}
 \caption{The faces of length $n$ in the case that $T$ is as in~(\ref{eq:T1}).} \label{fig:family1-nT1}
\end{figure}

%%%
Let $f'$ be the face containing the edge $u_0u_1$, different from $f$. If it is not a $4$-cycle then the fact that $3 \mid n$ and Corollary~\ref{cor:twoedges} imply that it is $(u_0, u_1, v_2, u_3, u_4, v_5, \ldots , u_{n-3}, u_{n-2}, v_{n-1})$, contradicting Lemma~\ref{le:lengths}. Thus the non $n$-faces of $\m$ are the $4$-cycles $(u_i, u_{i+1}, v_i, v_{i+1})$, $i \in \ZZ_n$. It remains to be shown that the resulting map $\m$ is indeed a map of class $\2$. It is clear that $\rho, \mu \in \Aut(\m)$. Let $\Phi$ be the flag corresponding to the vertex $u_0$, edge $u_0u_1$ and the $4$-face $(u_0,u_1,v_0,v_1)$. Then $\mu \rho = \alpha_0$, $t_1 = \alpha_1$ and $\mu = \alpha_{212}$, and so Lemma~\ref{le:aux} implies that $\m$ is a map of class~$\2$.

Suppose now that $T$ is as in (\ref{eq:T2}) and recall that in this case $n$ is even. The four faces of length $n$ are then (see Figure~\ref{fig:family1-nT2}):
$$ 
\begin{array}{rcl} f &=& (u_0, u_1, \ldots , u_{n-1}), \\
	ft_1 &=& (u_0, v_1, u_2, v_3, \ldots , u_{n-2}, v_{n-1}),\\
	ft_2 &=& (v_0, u_1, v_2, u_3, \ldots , v_{n-2}, u_{n-1})\ \text{and} \\
	ft_3 &=& (v_0, v_1, \ldots , v_{n-1}).\end{array}
$$
\begin{figure}[h!]
  \centering
  \includegraphics[scale =0.32]{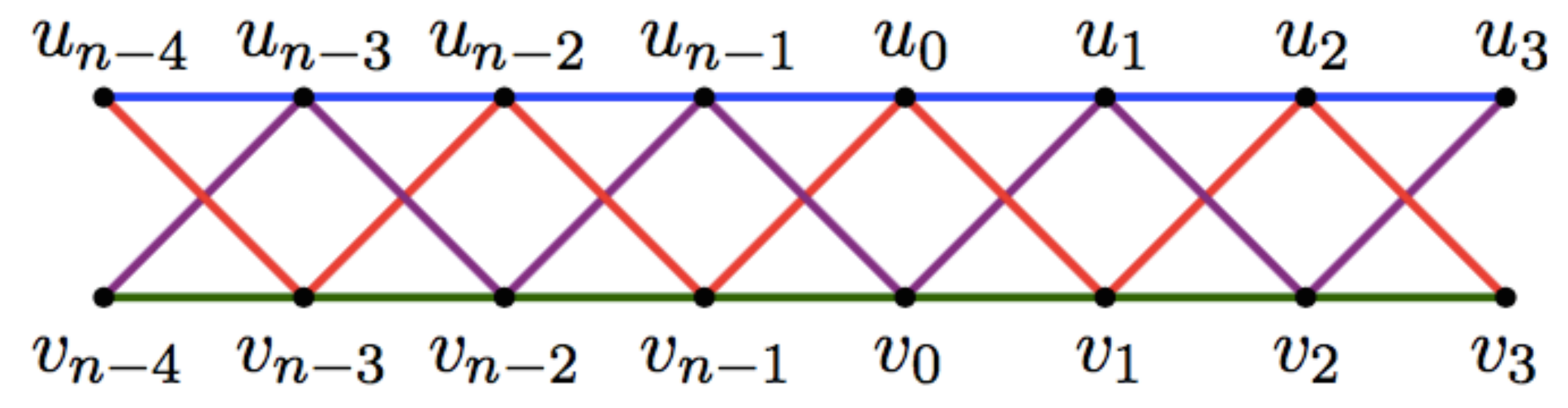}
 \caption{The faces of length $n$ in the case that $T$ is as in~(\ref{eq:T2}).} \label{fig:family1-nT2}
\end{figure}

Let $f'$ be the face containing the edge $u_0u_1$, different from $f$. If it is not a $4$-face then its boundary contains the path $(u_0, u_1, v_2, v_3, u_4, u_5, \dots , u_{n-3}, v_{n-2}, v_{n-1})$, and so Lemma~\ref{le:lengths} implies that $n \equiv 2 \pmod 4$. The two $2n$-faces are thus
$$
\begin{array}{l} f' = (u_0, u_1, v_2, v_3, \ldots , u_{n-2}, u_{n-1}, v_0, v_1, \ldots , u_{n-4}, u_{n-3}, v_{n-2}, v_{n-1})\ \text{and} \\
	f't_1 = (u_0, v_1, v_2, u_3, u_4, \ldots , v_{n-1}, v_0, u_1, u_2, \ldots , v_{n-3}, v_{n-2}, u_{n-1}).\end{array}
$$
\begin{figure}[htbp]
\begin{center}
\includegraphics[scale =0.32]{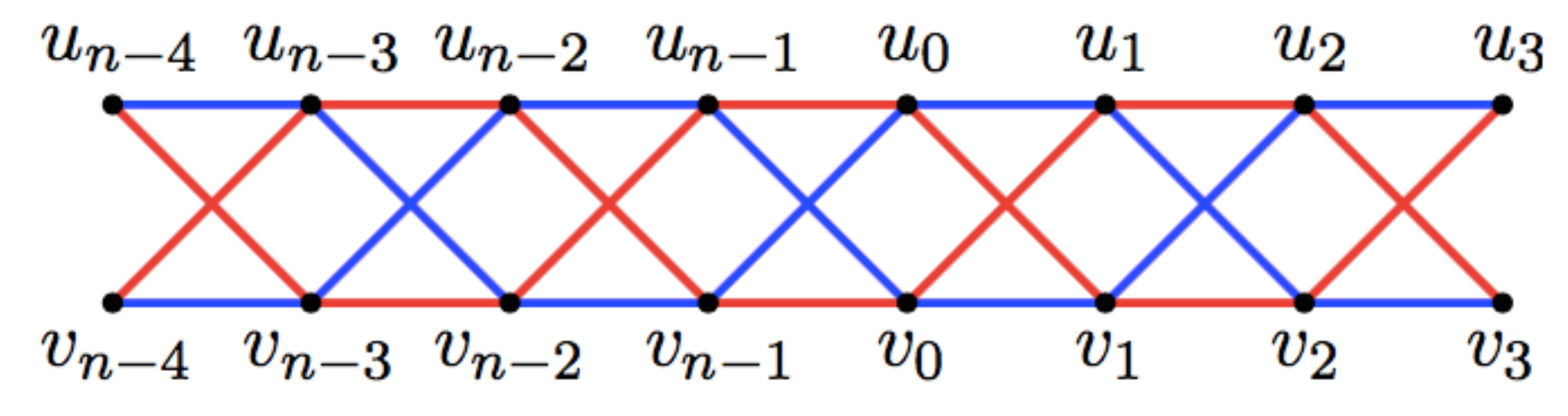}
\caption{The $2n$-faces of the map that has $2n$-faces and $n$-faces.}
\label{fig:2n-and-n-faces}
\end{center}
\end{figure}
Again (see Figure~\ref{fig:2n-and-n-faces}), $\rho, \mu \in \Aut(\m)$.
 Letting $\Phi$ be the flag corresponding to the vertex $u_0$, edge $u_0u_1$ and the corresponding $n$-face, it is clear that $\mu\rho = \alpha_0$, $\mu = \alpha_1$ and $\mu t_1 = \alpha_{212}$, and so Lemma~\ref{le:aux} implies that $\m$ is a map of class~$\2$. If however $f'$ is a $4$-face, then we get a map $\m$ with $\rho, \mu \in \Aut(\m)$ and $\mu\rho = \alpha_0$, $t_1 = \alpha_1$ and $\mu = \alpha_{212}$, where $\Phi$ is the flag corresponding to the vertex $u_0$, edge $u_0u_1$ and the $4$-face $(u_0,u_1,v_0,v_1)$. Thus $\m$ is again a map of class $\2$.

The case when $\m$ has faces of lengths $4$ and $2n$ can be dealt with in a similar way. Letting $f = (u_0,u_1, \ldots, u_{n-1},v_0,v_1, \ldots,v_{n-1})$ be one of the faces of length $2n$ it is easy to see that Lemma~\ref{le:cons} forces $T$ to be as in (\ref{eq:T2}), and so $n$ is even. This time $\rho\sigma_0, \mu t_3\sigma_0 \in \Aut(\m)$, and consequently $\mu t_3 \sigma_0 \rho\sigma_0 = \alpha_0$, $t_1 = \alpha_1$ and $\mu t_3\sigma_0 = \alpha_{212}$, where $\Phi$ is the flag corresponding to the vertex $u_0$, edge $u_0u_1$ and the $4$-face $(u_0,u_1,v_0,v_1)$. We therefore get a map of class $\2$. Details are left to the reader. 

To prove that all the obtained maps are pairwise nonisomorphic observe that this is clearly true if the two maps under consideration have faces of different lengths. As for the maps from items (v) and (vi) of the Theorem note that the maps with faces of lengths $4$ and $n$ corresponding to the case when $T$ is as in (\ref{eq:T1}) are such that a face of length $n$ meets all other three faces of length $n$ while in the case when $T$ is as in (\ref{eq:T2}) this does not hold, proving that the corresponding maps cannot be isomorphic.
\hfill $\Box$
\end{proof}

%%
%%%        subsec:RW_ii        %%%%%
%%
\subsection{Family (ii)} 

We now consider the second family of arc transitive Rose Window Graphs, namely, the graphs of the form $R_{2n}(n+2,n+1)$. Using a suitable computer package one can verify that the graph $R_6(5,4)$ is the underlying graph of three maps of class~$\2$. Their face lengths are $3$ and $4$, $3$ and $6$ and $4$ and $6$, respectively. Similarly, the graph $R_8(6,5)$ is the underlying graph of exactly one map of class~$\2$. Its face lengths are $4$ and $8$. For the rest of this section we can thus assume $n \geq 5$.

The graph $\G = R_{2n}(n+2,n+1)$ is isomorphic to the Praeger-Xu graph $C(2,n,2)$ (\cite{PraXu89}; see also \cite[Section~5]{Wil08}). For convenience we relabel the vertices of $\G$ by setting:
$$
	u_i = \left\{\begin{array}{lcl}
	x_i & ; & 0 \leq i \leq n-2\\
	y_{n-2} & ; & i = n-1\end{array}\right., \quad 
	v_i = \left\{\begin{array}{lcl}
	y_{i-1} & ; & 1 \leq i \leq n-2\\
	y_{2n-1} & ; & i = 0\\
	x_{n-1} & ; & i = n-1\end{array}\right.,	
$$
$$
	w_i = \left\{\begin{array}{lcl}
	y_{n+i-1} & ; & 0 \leq i \leq n-2\\
	x_{2n-1} & ; & i = n-1\end{array}\right.\mathrm{and} \quad 
	z_i = \left\{\begin{array}{lcl}
	x_{n+i} & ; & 0 \leq i \leq n-2\\
	y_{2n-2} & ; & i = n-1.\end{array}\right.
$$
Note that now the indices for $u_i$, $v_i$, $w_i$ and $z_i$ can be taken in $\ZZ_n$, and we do so.

\begin{figure}[!ht]
\centering
\includegraphics[scale = 0.42]{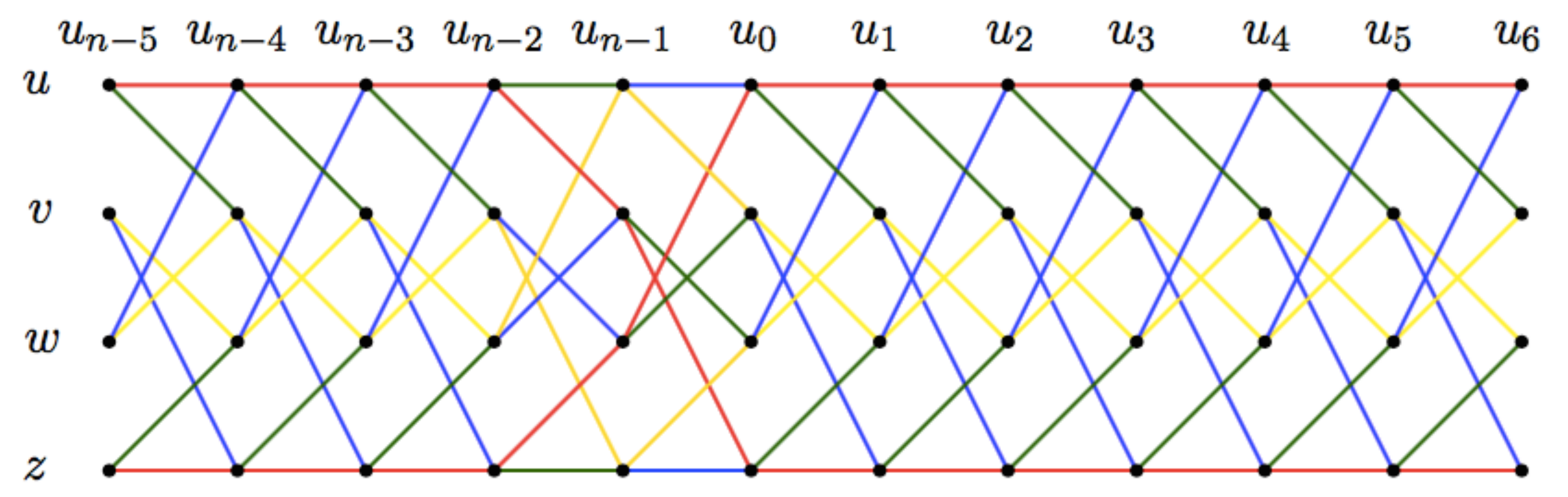}
\caption{Relabeling of the vertices of the graph $R_{2n}(n+2,n+1)$.}\label{fig:relabel}
\end{figure}

A presentation of $\G$ with respect to this relabeling, which we will be relying on in the reminder of this subsection, is given in Figure~\ref{fig:relabel}. The rim edges are colored red, the hub edges yellow, the in-spokes green and the out-spokes blue. 
% 
%    With this relabeling we get that the rim edges of $\G$ become the edges $u_iu_{i+1}$ and $z_iz_{i+1}$, for $i=0,1,\dots, n-3$, and the four edges $u_{n-2}v_{n-1}$, $v_{n-1}z_0$, $z_{n-2}w_{n-1}$ and $w_{n-1}u_0$ (colored red in Figure~\ref{fig:relabel}).
%The hub edges of $\G$ (in yellow in Figure~\ref{fig:relabel}) become the edges $v_iw_{i+1}$ and $w_iv_{i+1}$, for $i=0,1,\dots, n-3$, and the four edges $v_{n-2}z_{n-1}$, $w_{n-2}u_{n-1}$, $u_{n-1}v_0$ and $z_{n-1}w_0$.
%The in-spokes of $\G$ (in green in Figure~\ref{fig:relabel})  now become the edges  $u_iv_{i+1}$ and $z_iw_{i+1}$, for $i=0,1,\dots, n-3$, and the four edges $u_{n-2}u_{n-1}$, $z_{n-2}z_{n-1}$, $v_{n-1}w_0$ and $w_{n-1}v_0$.
%Finally, the out-spokes of $\G$ (in blue in Figure~\ref{fig:relabel}) become the edges $w_i u_{i+1}$ and $v_i z_{i+1}$, for $i=0,1,\dots, n-3$, and the four edges $v_{n-2}w_{n-1}$, $w_{n-2}v_{n-1}$, $u_{n-1}u_0$ and $z_{n-1}z_n$.
%Thus the permutations from (\ref{eq:rhomu}) act in the following way:
%{\small $$\mu=(v_0,w_0)(u_1,w_{n-1}) (v_1, u_{n-1})(w_1, z_{n-1})(z_1,v_{n-1}) \prod\limits_{i=2}^{\lceil \frac{n}{2}\rceil -1} (v_i,v_{-i})(w_i,w_{-i}) \prod_{i=2}^{n-2}(u_i,z_{-i})$$} {\scriptsize $$\rho=(u_0,u_1,\ldots,u_{n-2},v_{n-1},z_0,z_1,\ldots,z_{n-2},w_{n-1})(v_0,v_1,\ldots,v_{n-2},u_{n-1},w_0,w_1,\ldots,w_{n-2},z_{n-1})$$}

Let us define $\sigma_i = (u_i, v_i)(w_i,z_i)(u_{i+1},w_{i+1})(v_{i+1},z_{i+1})$ for each $i \in \{0,1,\ldots,n-1\}$, where the indices are taken in $\ZZ_{n}$. It is clear that the $\sigma_i$ are automorphisms of $\G$. (The $\sigma_i$ correspond to the permutations $\epsilon_i$ of \cite{KovKutRuf10} and $\sigma_i$ of \cite{Wil08}).
Note that, for each $i$, $\sigma_i\sigma_{i+1}=\sigma_{i+1}\sigma_i$, and so $\sigma_i\sigma_j=\sigma_j\sigma_i$ holds for every $i, j \in  \{0,1,\ldots,n-1\}$. We denote the (elementary abelian) group generated by $\sigma_0, \ldots, \sigma_{n-1}$ by $N$.

%It is easy to see that t
Now, set $\alpha =\rho\sigma_{n-1}$ and observe that 
\begin{equation}
\label{eq:alpha}
\alpha=(u_0,u_1,\ldots, u_{n-1})(v_0,v_1,\ldots, v_{n-1})(w_0,w_1,\ldots, w_{n-1})(z_0,z_1,\ldots, z_{n-1}),
\end{equation}
and thus, $\sigma_i=\sigma_{0}^{\alpha^{i}}$.
This shows that the $n$-cycles $(u_0,u_1,\ldots, u_{n-1})$ and $(z_0,z_1,\ldots, z_{n-1})$ are $\Aut(\G)$-consistent cycles of $\G$. Next, let 
\begin{equation}
\label{eq:beta}
\beta = \left(\prod\limits_{i=1}^{\lfloor \frac{n-1}{2}\rfloor} (u_i, u_{n-i})(z_i, z_{n-i})\right) (v_0,w_0) \left(\prod\limits_{i=1}^{n-1} (v_i, w_{n-i})\right).
\end{equation}
We can think of $\beta$ being the ``twisted'' reflection with respect to the ``line through $u_0$ and $z_0$'' in Figure~\ref{fig:relabel}, which interchanges the roles of the $v_i$ and $w_j$ vertices. It is easy to verify that $\eta = \sigma_1\sigma_2\cdots \sigma_{n-2}\beta$, and so $\beta \in \Aut(\G)$. Moreover, $\Aut(\G) = \langle \rho, \mu, \sigma_0\rangle = \langle \alpha, \beta, \sigma_0\rangle$ (see also~\cite{Wil08}).

The graph $\G$ has several blocks of imprimitivity for the action of its automorphism group. For instance, one can verify that the sets $B_i=\{u_i,v_i,w_i,z_i\}$ are blocks for each $i$ (recall that $n > 4$). Consequently, the subsets $\{u_i,z_i\}$ and $\{v_i,w_i\}$ are also blocks for $\Aut(\G)$ (since these are the only $2$-subsets of vertices of $B_i$ which do not lie on a common $4$-cycle). 
\medskip

For the rest of this subsection we assume that $\G$ is the underlying graph of a map $\mathcal{M}$ of type $2_{\{0,1\}}$ and we let $T=N \cap \Aut (\mathcal{M})$.
 Since $\Aut(\mathcal{M})$ is $1$-regular it follows that $|T| = 8$. Moreover, as in the case of Family (i) $T$ is normal in $\Aut(\m)$ and $1$-regularity of $\Aut(\m)$ implies that no nontrivial element of $T$ fixes an arc of $\G$. Observe that since each element of $N$ fixes each $B_i$ setwise any $2$-element subset of $B_i$ is a block of imprimitivity for the restriction of the action of $N$ (and thus $T$) on $B_i$. This proves that any $t \in T$ fixing a vertex of $B_i$ must fix $B_i$ pointwise. Thus, an element $t\in T$ has one of the following actions on the block $B_i$:
 \begin{eqnarray}
 1, \ \ (u_i,v_i)(w_i,z_i), \ \ (u_i,w_i)(v_i,z_i), \ \mathrm{or} \ \ (u_i,z_i)(v_i,w_i).
 \end{eqnarray}
For $t \in T$ we shall write $t=(j_0,j_1,\ldots,j_{n-1})$, where $j_i$ is equal to $0$, $1$, $2$ or $3$, depending on whether the action of $t$ on $B_i$ is trivial, or is $(u_i,v_i)(w_i,z_i)$, $(u_i,w_i)(v_i,z_i)$ or $(u_i,z_i)(v_i,w_i)$, respectively. We can now determine all the possibilities for the subgroup $T$.

%%%
\begin{lemma}
\label{le:T(ii)}
We either have 
\begin{equation}
\label{eq:T3}
	\begin{array}{c} T = \{(0,0,\ldots,0), (0,1,2,0,1,2,\ldots,0,1,2),\\
	(2,0,1,2,0,1,\ldots,2,0,1), (1,2,0,1,2,0,\ldots,1,2,0),\\
        (2,1,3,2,1,3,\ldots,2,1,3), (3,2,1,3,2,1,\ldots,3,2,1), \\
	 (1,3,2,1,3,2,\ldots,1,3,2),(3,3,\ldots,3)
	  \},\end{array}
\end{equation}
in which case $3\mid n$, or
\begin{equation}
\label{eq:T4}
	\begin{array}{c} T = \{(0,0,\ldots,0), (0,1,3,2,0,1,3,2,\ldots,0,1,3,2),\\
	(2,0,1,3,2,0,1,3,\ldots,2,0,1,3), (3,2,0,1,3,2,0,1,\ldots,3,2,0,1),\\
        (1,3,2,0,1,3,2,0,\ldots,1,3,2,0), (2,1,2,1,\ldots,2,1), \\
	(1,2,1,2,\ldots,1,2),(3,3,\ldots,3)
	  \},\end{array}
\end{equation}
in which case $4\mid n$. In particular, $\gcd(n,12) \neq 1$.
\end{lemma}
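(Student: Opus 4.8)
The plan is to mirror the strategy of Lemma~\ref{le:T(i)}, using that $T=N\cap\Aut(\m)$ is normalised by elements of $\Aut(\m)$ projecting onto $\alpha$ and $\beta$ (so $T$ is invariant under conjugation by both) and that $\Aut(\m)$ is $1$-regular. First I would recast everything as linear algebra over $\ZZ_2$. Since $N$ is elementary abelian of order $2^n$ with basis $\sigma_0,\dots,\sigma_{n-1}$, I write a general element as $\prod_i\sigma_i^{a_i}$ and attach to it the binary sequence $p=(a_0,\dots,a_{n-1})$; a direct check against the definition of the $\sigma_i$ shows that such an element acts on the block $B_i$ with the label $j_i$ corresponding to the pair $(a_i,a_{i-1})$ under the dictionary $0=(0,0)$, $1=(1,0)$, $2=(0,1)$, $3=(1,1)$, so that consecutive labels always share one coordinate. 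In these coordinates conjugation by $\alpha$ becomes the cyclic shift $\phi\colon(a_i)\mapsto(a_{i-1})$, while conjugation by $\beta$ becomes the reversal $(a_i)\mapsto(a_{-i})$ (up to a shift, which is immaterial since $T$ is shift-invariant), the interchange of the labels $1$ and $2$ induced by $\beta$ being exactly the transposition of the two entries of each pair. Thus $T$ (recall $|T|=8$) corresponds to a $3$-dimensional subspace $C\subseteq\ZZ_2^n$ that is invariant under the shift $\phi$ and under reversal.

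Next I would translate $1$-regularity. Every nontrivial Klein-four permutation of a block fixes no vertex, so a nonidentity $t\in T$ fixes a vertex of $B_i$ exactly when $j_i=0$; as $\G$ has no edges inside a block, $t$ fixes an arc iff $j_i=j_{i+1}=0$ for some $i$, which in terms of $p$ means three cyclically consecutive zeros. Hence $1$-regularity is equivalent to the clean condition that \emph{no} nonzero word of $C$ has three cyclically consecutive zeros.

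Now everything reduces to classifying such $C$. The key device is the order $d$ of $\phi|_C\in\mathrm{GL}(3,2)$, which lies in $\{1,2,3,4,7\}$ and divides $n$. Since $\phi^d$ acts trivially on $C$, we get $C\subseteq\ker(\phi^d-1)=\{p:p_i=p_{i+d}\}$, the space of period-$d$ sequences, of dimension $\gcd(d,n)$. This at once discards $d=1$ and $d=2$ (dimension $\le 2<3$) and forces $3\mid n$ when $d=3$ and $4\mid n$ when $d=4$. The delicate case is $d=7$: here $7\mid n$ and $C$ is a $3$-dimensional code on which $\phi$ acts with order $7$, hence one of the two minimal order-$7$ submodules of the period-$7$ space attached to the two irreducible cubic factors $x^3+x+1$ and $x^3+x^2+1$ of $x^7-1$ over $\ZZ_2$. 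These two codes are reverses of one another, so neither is reversal-invariant, contradicting $\beta$-invariance. I expect this to be the main obstacle, precisely because both of these codes \emph{do} satisfy the no-three-consecutive-zeros condition: $1$-regularity alone does not rule $d=7$ out, and one genuinely needs the reversal coming from $\beta$ (which is why the statement excludes $n$ divisible only by $7$, i.e. $\gcd(n,12)=1$).

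Finally I would pin down $C$ in the two surviving cases. For $d=3$ the period-$3$ space is already $3$-dimensional, so $C$ is all of it, and decoding its eight sequences into label tuples returns exactly the set in~(\ref{eq:T3}). For $d=4$ the period-$4$ space is $4$-dimensional, and its unique $3$-dimensional shift-invariant subcode is the even-weight subcode (the ideal generated by $x+1$ in $\ZZ_2[x]/(x^4-1)$); this avoids all weight-one words, hence meets the no-three-consecutive-zeros condition and is automatically reversal-invariant, and decoding it gives exactly~(\ref{eq:T4}). In either case $3\mid n$ or $4\mid n$, so $\gcd(n,12)\ne 1$, as claimed.
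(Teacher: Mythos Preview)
Your proof is correct and takes a genuinely different route from the paper's. The paper works directly with the label tuples $(j_0,\ldots,j_{n-1})\in\{0,1,2,3\}^n$: it records the adjacency constraints (e.g.\ $j_i=0\Rightarrow j_{i+1}\in\{0,1\}$), fixes a specific nonidentity element $t_1=(0,1,i_2,\ldots,i_{n-2},2)$, and then splits into the two cases $i_2=2$ and $i_2=3$; in each case $\beta$-invariance forces $t_1$ to be palindromic and $\alpha$-invariance then forces $t_1^{\alpha^3}=t_1$, respectively $t_1^{\alpha^4}=t_1$, after which $T$ is read off by listing $t_1^{\alpha^k}$ and their products. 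Your approach instead passes to the underlying binary generator sequence $p\in\ZZ_2^n$, recognises $T$ as a $3$-dimensional cyclic code invariant under reversal, and classifies via the order $d$ of the shift in $\mathrm{GL}(3,2)$. This buys a clean conceptual explanation of why exactly the periods $3$ and $4$ arise (they are the only element orders in $\mathrm{GL}(3,2)$, apart from $7$, that can support a $3$-dimensional invariant module), and it isolates precisely where $\beta$-invariance is indispensable: only at $d=7$, to exclude the two mutually reciprocal $[7,3]$ simplex codes, which do satisfy the no-three-consecutive-zeros condition. The paper's argument is more elementary and entirely self-contained; yours is more structural, makes the role of each hypothesis transparent, and would generalise more readily to other values of $|T|$.
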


\begin{proof}
As in the proof of Lemma~\ref{le:T(i)} we can show that the fact that $T$ is a normal subgroup of $\Aut(\m)$ and $\Aut(\m)$ is $1$-regular implies that $T^{\alpha}=T$ and $T^{\beta}=T$.
That is, given $t\in T$, both $t^{\alpha}$ and $t^\beta$ are in $T$. Note that if $t=(j_0, j_1, \dots, j_{n-1})$, then $t^\alpha = (j_{n-1}, j_0, \dots, j_{n-1})$, and $t^\beta = (k_0, k_1, k_2, \dots, k_{n-1})$, where $k_i=j_{n-i}$ whenever $j_{n-i} \in \{0, 3\}$, $k_i=2$ if $j_{n-i}=1$ and $k_i=1$ if $j_{n-i}=2$.

Consider the block $B_i$ and let $t \in T$.
Observe that if $j_i=0$, since the common neighbours of $u_i$ and $w_i$ are $u_{i+1}$ and $v_{i+1}$, then $j_{i+1}\in \{0,1\}$. 
Analogously if $j_{i}=2$, then $j_{i+1}\in \{0,1\}$, and if $j_{i}\in \{1,3\}$, then $j_{i+1}\in \{2,3\}$.
We also note that $1$-regularity of $\Aut(\m)$ implies that the identity is the only element of $T$ such that $j_i = j_{i+1}=0$ holds for some $i$. In particular, for a non-identity $t \in T$ every $0$ must be followed by a $1$ and be preceded by a $2$. Similarly, for any $t=(j_0,j_1, \dots, j_{n-1})$ and $t'=(k_0, k_1, \dots, k_{n-1})$ in $T$ if for some $i$, $j_i=k_i$ and $j_{i+1}=k_{i+1}$, then $t = t'$. 

Let $T=\{1,t_1,t_2,t_3,t_4,t_5,t_6,t_7\}$. Without loss of generality let $t_1=(0,1,i_2,i_3,\ldots,i_{n-2}, 2)$ and  $t_2 = t_1^{\alpha}$. Recall that $i_2 \in \{2,3\}$. We consider each of the two cases separately. 
\medskip

\noindent
Case 1: $i_2=2$.\\
Then $t_1=(0,1,2,i_3,\ldots,i_{n-2},2)$ and $t_1^\beta = (0,1,k_2,k_3,\ldots , k_{n-3},1,2)$, and so the above remarks imply that $t_1^\beta = t_1 = (0,1,2,i_3, \ldots , i_{n-3},1,2)$. Then $t_1^{\alpha^3} = (i_{n-3},1,2,0,1,2,i_3,\ldots, i_{n-4})$, and so $t_1^{\alpha^3} = t_1$, implying that $n$ is divisible by $3$ and $t_1 = (0,1,2,0,1,2,\ldots , 0,1,2)$. The subgroup $T$ is now completely determined: 
$$ 
\begin{array}{ccccl} 
 &  & t_1 & =& (0,1,2,0,1,2,\ldots,0,1,2), \\    
t_1^{\alpha}  & = & t_2 & = & (2,0,1,2,0,1,\ldots,2,0,1), \\    
t_2^{\alpha} &  =& t_3 & = & (1,2,0,1,2,0,\ldots,1,2,0), \\    
t_1t_2 & = & t_4 & = & (2,1,3,2,1,3,\ldots,2,1,3), \\    
t_4^{\alpha} &= & t_5 & = & (3,2,1,3,2,1,\ldots,3,2,1), \\       
t_5^{\alpha} & = & t_6 & = &(1,3,2,1,3,2,\ldots,1,3,2), \\    
t_3t_4&  =& t_7& = & (3,3,3,3,3,3,\ldots,3,3,3).
\end{array}
$$

\noindent
Case 2: $i_2=3$.\\
In this case we have $t_1=(0,1,3,i_3,i_4,\ldots, i_{n-2},2)$. As before $t_1^{\beta} = t_1$, and so $t_1 = (0,1,3,i_3,\ldots , i_{n-3},3,2)$. Set $t_2 = t_1^\alpha = (2,0,1,3,i_3,\ldots , i_{n-3},3)$ and $t_3 = t_1^{\alpha^2} = (3,2,0,1,3,i_3,\ldots, i_{n-3})$. Then $t = t_1t_2t_3 = (1,3,2,j_3,j_4,\ldots , j_{n-1})$. Since $t_4 = t_1^{\alpha^3} = (i_{n-3},3,2,0,1,3,i_3,\ldots , i_{n-4})$ it follows that $i_{n-3} = 1$, and so $t_1^{\alpha^4} = t_1$, implying that $n$ is divisible by $4$ and $t_1 = (0,1,3,2,0,1,3,2,\ldots , 0,1,3,2)$. The subgroup $T$ is now completely determined:
\begin{equation*}
%\label{eq:T4}
 \begin{tabular}{ccccl} 
 &  & $t_1$ & $=$ & $(0,1,3,2,0,1,3,2,\ldots,0,1,3,2)$, \\    
$t_1^{\alpha}$  & = & $t_2$ & $=$ & $(2,0,1,3,2,0,1,3,\ldots,2,0,1,3)$, \\    
$t_2^{\alpha}$ &  =& $t_3$ & $=$ & $(3,2,0,1,3,2,0,1,\ldots,3,2,0,1)$, \\    
$t_3^{\alpha}$ &  =& $t_4$ & $=$ & $(1,3,2,0,1,3,2,0,\ldots,1,3,2,0)$, \\    
$t_1t_2$ & = & $t_5$ & $=$ & $(2,1,2,1,2,1,2,1,\ldots,2,1,2,1)$, \\       
 $t_5^{\alpha}$ & = & $t_6$ & $=$ & $(1,2,1,2,1,2,1,2,\ldots,1,2,1,2)$, \\    
 $t_1t_3$&  =& $t_7$ & $=$ & $(3,3,3,3,3,3,3,3,\ldots,3,3,3,3)$.
\end{tabular}
\end{equation*}
\hfill $\Box$
\end{proof}
\bigskip

Recall that, by Lemma~\ref{facesofhereditarymaps}, the boundaries of faces of a map $\m$ of class $\2$ are $\Aut(\m)$-consistent cycles. By Proposition~\ref{pro:cons} the graph $\G$ has three orbits of $\Aut(\G)$-consistent cycles. Since $\sigma_1\beta\alpha, \alpha, \rho \in \Aut(\G)$ are shunts for the cycles 
$$ (u_0,u_1,w_0,v_1),\ (u_0,u_1,u_2, \ldots,u_{n-1}) \ \mathrm{and}$$
$$ (u_0,u_1,\ldots,u_{n-3}, u_{n-2},v_{n-1},z_0,z_1,\ldots,z_{n-3}, z_{n-2},w_{n-1}),$$
respectively, these three cycles are representatives of the three orbits of $\Aut(\G)$-consistent cycles. Therefore, the $\Aut(\G)$-consistent cycles are of lengths $4$, $n$ and $2n$, implying that these are the only possible lengths of faces of $\m$. We remark that $n \neq 4$ implies that each edge of $\G$ is in exactly one $4$-cycle. The proof of the following lemma is similar to that of Lemma~\ref{le:lengths} and is left to the reader (it again relies on the fact that any two $\Aut(\G)$-consistent cycles of length $n$ or $2n$ are permutable by an element of the subgroup $N$).
 
 %%%
 \begin{lemma}
\label{le:lengths2}
The map $\m$ has faces of two different lengths.
\end{lemma}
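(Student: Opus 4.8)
The plan is to mirror the proof of Lemma~\ref{le:lengths}, replacing the two-element blocks of Family~(i) by the four-element blocks $B_i=\{u_i,v_i,w_i,z_i\}$ and the order-$4$ group by the order-$8$ group $T$ of Lemma~\ref{le:T(ii)}. Recall that the faces of $\m$ are $\Aut(\G)$-consistent cycles (Lemma~\ref{facesofhereditarymaps}) of length $4$, $n$ or $2n$, and that $\m$ has exactly two orbits of faces, each consisting of faces of a single length. I would exclude, one at a time, the three possibilities that both orbits have length $4$, that both have length $n$, and that both have length $2n$.

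First I would dispose of the all-$4$ case exactly as in Lemma~\ref{le:lengths}: since $\m$ admits a one-step rotation about each face, Lemma~\ref{face-edge} shows that every edge lies on exactly two faces, and since $n\neq 4$ every edge of $\G$ lies on a unique $4$-cycle. Hence at most one of the two faces through an edge can be a $4$-cycle, so every edge lies on a face of length $n$ or $2n$, and the faces of $\m$ cannot all be $4$-cycles.

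The crux is to establish two facts. \emph{(a)} For $\ell\in\{n,2n\}$ the normal subgroup $N$ acts transitively on the set of all $\Aut(\G)$-consistent $\ell$-cycles. I would prove this by orbit counting: writing $C_0$ for the displayed consistent $\ell$-cycle, a direct check in the block encoding of Lemma~\ref{le:T(ii)} gives $\mathrm{Stab}_N(C_0)=1$ for $\ell=n$ and $\mathrm{Stab}_N(C_0)=\{1,t_7\}$ for $\ell=2n$ (the all-$3$ element $t_7$ being the only nontrivial element of $N$ preserving the pair $\{u_i,z_i\}$/$\{v_i,w_i\}$ of $C_0$-vertices in each block), so that $|C_0 N|=2^n$, respectively $2^{n-1}$. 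On the other hand $C_0$, being an $\Aut(\G)$-symmetric consistent cycle, has a dihedral stabilizer of order $2\ell$ inside $\Aut(\G)=N\rtimes\langle\alpha,\beta\rangle$ (of order $2^{n+1}n$), whence its orbit has size at most $2^{n+1}n/2\ell$, which equals $2^n$, respectively $2^{n-1}$. Thus $C_0 N$ is the whole orbit, which by Proposition~\ref{pro:cons} consists of all consistent $\ell$-cycles. \emph{(b)} The $\Aut(\m)$-orbit ${\cal O}$ of a face $f$ of length $\ell$ equals $fT$. Here $fT\subseteq{\cal O}$ since $T\le\Aut(\m)$, and equality follows from the same stabilizer computation: the $T$-stabilizer of $f$ is trivial when $\ell=n$ and equals $\{1,t_7\}$ when $\ell=2n$, so $|fT|$ matches $|{\cal O}|=|\Aut(\m)|/2\ell$.

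With these in hand the final step is the abelian-sliding argument of Lemma~\ref{le:lengths}. Assuming both orbits ${\cal O}_1,{\cal O}_2$ have the same length $\ell\in\{n,2n\}$, I would pick $f\in{\cal O}_1$ and $f'\in{\cal O}_2$; by \emph{(a)} there is $\sigma\in N$ with $f'=f\sigma$, and since $N$ is abelian and $T\le N$, fact \emph{(b)} gives ${\cal O}_2=f'T=(fT)\sigma={\cal O}_1\sigma$. As $\sigma$ is an involution, ${\cal O}_1\cup{\cal O}_2$ (the entire face set of $\m$) is $\sigma$-invariant, so $\sigma$ is an automorphism of $\m$ and hence $\sigma\in N\cap\Aut(\m)=T$; but every element of $T$ preserves each $\Aut(\m)$-orbit, forcing ${\cal O}_1\sigma={\cal O}_1$ and contradicting ${\cal O}_1\sigma={\cal O}_2$. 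I expect the main obstacle to be fact \emph{(a)}: unlike in Family~(i), where each $\sigma_i$ moves only the block $B_i$, here $\sigma_i$ moves the two consecutive blocks $B_i$ and $B_{i+1}$, so the vertex-profiles realizable by $N$ satisfy a cyclic compatibility condition, and the transitivity of $N$ on consistent cycles must be extracted from the orbit--stabilizer count rather than read off block by block.
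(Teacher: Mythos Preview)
Your proposal is correct and follows precisely the route the paper indicates: the paper leaves the proof to the reader with the single hint that it is ``similar to that of Lemma~\ref{le:lengths}'' and ``relies on the fact that any two $\Aut(\G)$-consistent cycles of length $n$ or $2n$ are permutable by an element of the subgroup $N$.'' You supply considerably more detail than the paper does, in particular the orbit--stabilizer count establishing that $N$ is transitive on the consistent $\ell$-cycles for $\ell\in\{n,2n\}$ (the paper simply asserts this), and the verification that $|fT|=|{\cal O}|$ via the explicit $T$-stabilizers; both computations check out, using $|\Aut(\G)|=2^{n+1}n$, $|N|=2^n$, $|T|=8$, and the fact that the all-$3$ element $(3,3,\ldots,3)$ lies in $T$ in both cases of Lemma~\ref{le:T(ii)}. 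One small remark: in (b) you implicitly need that the $N$-stabilizer of \emph{every} consistent $\ell$-cycle (not just the displayed $C_0$) is as computed, but this follows immediately from (a) and the commutativity of $N$, since all such cycles are $N$-translates of $C_0$.
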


We can now describe all maps $\m$ of type $\2$ whose underlying graph is $\G$. 

\begin{theorem}
\label{th:RW(ii)}
Let $\G = R_{2n}(n+2,n+1)$ be a Rose Window graph with $n \geq 3$. Then $\G$ is the underlying graph of a map $\m$ of class $\2$ if and only if $\gcd(n,12) > 2$. Moreover, letting $n_0 \in \{0,3,4,6,8,9\}$ be the residue of $n$ modulo $12$, the following holds:
\begin{itemize}\itemsep = 0pt
\item[(i)] if $n = 4$, then $\G$ is the underlying graph of a unique map of class~$\2$ with face lengths $4$ and $8$.
\item[(ii)] if $n_0 \in \{3, 9\}$, then $\G$ is the underlying graph of three nonisomorphic maps of class~$\2$; one has faces of lengths $4$ and $n$, one has faces of lengths $4$ and $2n$, and one has faces of lengths $n$ and $2n$.
\item[(iii)] if $n_0 \in \{4, 6, 8\}$, then $\G$ is the underlying graph of two nonisomorphic maps of class~$\2$; one has faces of lengths $4$ and $n$, while the other has faces of lengths $4$ and $2n$.
\item[(iv)] if $n_0 = 0$, then $\G$ is the underlying graph of four nonisomorphic maps of class~$\2$; two have faces of lengths $4$ and $n$, and two has faces of lengths $4$ and $2n$.%, and one has faces of lengths $n$ and $2n$.
\end{itemize}
\end{theorem}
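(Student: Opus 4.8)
The plan is to follow the template of the proof of Theorem~\ref{th:RW(i)}. Necessity of $\gcd(n,12)>2$ is immediate: if $\G$ underlies a map $\m$ of class $\2$ then Lemma~\ref{le:T(ii)} forces $T=N\cap\Aut(\m)$ to be of the form (\ref{eq:T3}), whence $3\mid n$, or of the form (\ref{eq:T4}), whence $4\mid n$. The cases $n\in\{3,4\}$ being settled by computer at the start of the subsection, I assume $n\geq 5$, so that the three admissible face lengths $4$, $n$ and $2n$ (arising from the three $\Aut(\G)$-orbits of consistent cycles exhibited above, each face being a symmetric consistent cycle by Lemma~\ref{facesofhereditarymaps}) are pairwise distinct. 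By Lemma~\ref{le:lengths2} the faces of $\m$ realise exactly two of them, leaving three combinatorial types: $\{4,n\}$, $\{4,2n\}$ and $\{n,2n\}$. By Corollary~\ref{cor:twoedges} and the presence of a one-step rotation at every face, $\m$ is determined once one orbit of faces of length $n$ or $2n$ is fixed together with the length of the faces in the second orbit; the whole problem thus reduces to deciding, for each admissible form of $T$, which of the three types occur.

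I would first treat the types $\{4,n\}$ and $\{4,2n\}$. For each admissible form of $T$, take as a single long face the $n$-cycle $(u_0,u_1,\ldots,u_{n-1})$ (shunt $\alpha$) in the first type and the $2n$-cycle $(u_0,\ldots,u_{n-2},v_{n-1},z_0,\ldots,z_{n-2},w_{n-1})$ (shunt $\rho$) in the second, and form its orbit under $T$; one checks that no nontrivial element of $T$ fixes the $n$-cycle, so that orbit has size $8$, whereas $t_7=(3,3,\ldots,3)$ fixes the $2n$-cycle, so that orbit has size $4$. Filling the remaining edges with the unique $4$-cycles through them, an edge-count (the graph has $8n$ edges, each on two faces) shows that in both types every edge lies on exactly one long face and one $4$-face, so we really obtain a map. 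As in Theorem~\ref{th:RW(i)}, I would then exhibit automorphisms $\alpha_0$, $\alpha_1$ and $\alpha_{212}$ as explicit words in $\rho,\mu$ (equivalently $\alpha,\beta$) and the elements of $T$; Lemma~\ref{le:aux}, together with the fact that the two face lengths differ so that $\m$ cannot be reflexible, then gives a map of class $\2$. This produces exactly one map of each of the types $\{4,n\}$ and $\{4,2n\}$ per admissible form of $T$, and both types turn out to occur for both forms.

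The type $\{n,2n\}$ is where the real work lies and is the main obstacle. Here there are no $4$-faces, so for the $n$-face $f=(u_0,\ldots,u_{n-1})$ the second face through the edge $u_0u_1$ is forced to be a $2n$-face, and by Corollary~\ref{cor:twoedges} it may not continue along $u_1u_2$: it must leave $u_1$ through one of the twisted neighbours $w_0$ or $v_2$. I would trace this forced $2n$-cycle around $\G$, exactly as the zig-zag path was traced in the $\{n,2n\}$ analysis of Theorem~\ref{th:RW(i)}, and read off the condition under which it closes up into a single consistent $2n$-cycle belonging to the $T$-orbit of a $2n$-face. The expected outcome of this closing-up computation is that a compatible pair of $n$- and $2n$-orbits exists precisely when $T$ is of the form (\ref{eq:T3}) and $n$ is odd, that is, when $n_0\in\{3,9\}$; in particular the type $\{n,2n\}$ fails for the even residue $n_0=6$ and for the form (\ref{eq:T4}).

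Finally I would assemble the count and settle non-isomorphism. Tabulating the admissible forms of $T$ against the three types yields exactly the stated numbers: for $n_0\in\{3,9\}$ only (\ref{eq:T3}) is available and all three types occur (three maps); for $n_0\in\{4,6,8\}$ a single form is available and only $\{4,n\}$ and $\{4,2n\}$ occur (two maps); and for $n_0=0$ both forms are available, each contributing one $\{4,n\}$ and one $\{4,2n\}$ map (four maps). Maps realising different pairs of face lengths are non-isomorphic for trivial reasons. For $n_0=0$ the two maps of type $\{4,n\}$, and likewise the two of type $\{4,2n\}$, come from the two different forms of $T$ and would be separated by a combinatorial invariant of the incidence pattern of the long faces --- such as how a long face meets the other faces of its own orbit --- just as the forms (\ref{eq:T1}) and (\ref{eq:T2}) were distinguished at the end of the proof of Theorem~\ref{th:RW(i)}.
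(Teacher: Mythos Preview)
Your plan is correct and follows essentially the same route as the paper: the same reduction via Lemmas~\ref{le:T(ii)} and~\ref{le:lengths2} to the three possible pairs of face lengths, the same use of the $T$-orbit of a chosen long cycle to pin down the faces, the same verification via Lemma~\ref{le:aux} with explicit $\alpha_0,\alpha_1,\alpha_{212}$, and the same conclusion that the $\{n,2n\}$ type survives only for $T$ as in~(\ref{eq:T3}) with $n$ odd. The only difference is organisational---the paper splits first on whether $\m$ has $n$-faces and then on the form of $T$, whereas you split first on the face-length pair---but the computations and the non-isomorphism argument are the same.
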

  
\begin{proof}
The cases $n = 3$ and $n = 4$ have been dealt with at the beginning of this subsection. For the rest of the proof we can thus assume $n > 4$. We distinguish the cases depending on whether $\m$ has $n$-faces or not. 
\medskip

\noindent
Case 1: $\m$ has an orbit of $n$-faces.\\
Without loss of generality we can assume that one of the $n$-faces of $\m$ is $f=(u_0,u_1,\ldots,u_{n-1})$. As in the proof of Theorem~\ref{th:RW(i)}, we deal with the two possibilities for $T$ separately.
\smallskip
 
\noindent
Subcase 1.1: $T$ is as in (\ref{eq:T3}), in which case $3$ divides $n$.\\
By Lemma~\ref{le:T(ii)} the eight $n$-faces of $\m$ are as represented in Figure~\ref{fig:family2-n4T3}, that is:
\begin{equation}
\label{eq:nfacesT5}
 \begin{tabular}{rcccl} 
$f_0$ & $=$ & $f$ & $=$ & $(u_0,u_1,\ldots,u_{n-1})$,\\    
$f_1$ & $=$ &$ft_1$ & $=$ & $(u_0,v_1,w_2,u_3,v_4,w_5, \ldots,u_{n-3},v_{n-2},w_{n-1})$, \\    
$f_2$ & $=$ &$ft_2$ & $=$ & $(w_0,u_1,v_2,w_3,u_4,v_5, \ldots,w_{n-3},u_{n-2},v_{n-1})$, \\    
$f_3$ & $=$ &$ft_3$ & $=$ & $(v_0,w_1,u_2,v_3,w_4,u_5, \ldots,v_{n-3},w_{n-2},u_{n-1})$, \\    
$f_4$ & $=$ &$ft_4$ & $=$ & $(w_0,v_1,z_2,w_3,v_4,z_5, \ldots,w_{n-3},v_{n-2},z_{n-1})$, \\
$f_5$ & $=$ &$ft_5$ & $=$ & $(z_0,w_1,v_2,z_3,w_4,v_5, \ldots,z_{n-3},w_{n-2},v_{n-1})$, \\
$f_6$ & $=$ &$ft_6$ & $=$ & $(v_0,z_1,w_2,v_3,z_4,w_5, \ldots,v_{n-3},z_{n-2},w_{n-1})$, \\
$f_7$ & $=$ &$ft_7$ & $=$ & $(z_0,z_1,\ldots,z_{n-3},z_{n-1})$. \\    
\end{tabular}
\end{equation}

\begin{figure}[!ht]
\centering
\includegraphics[scale = 0.42]{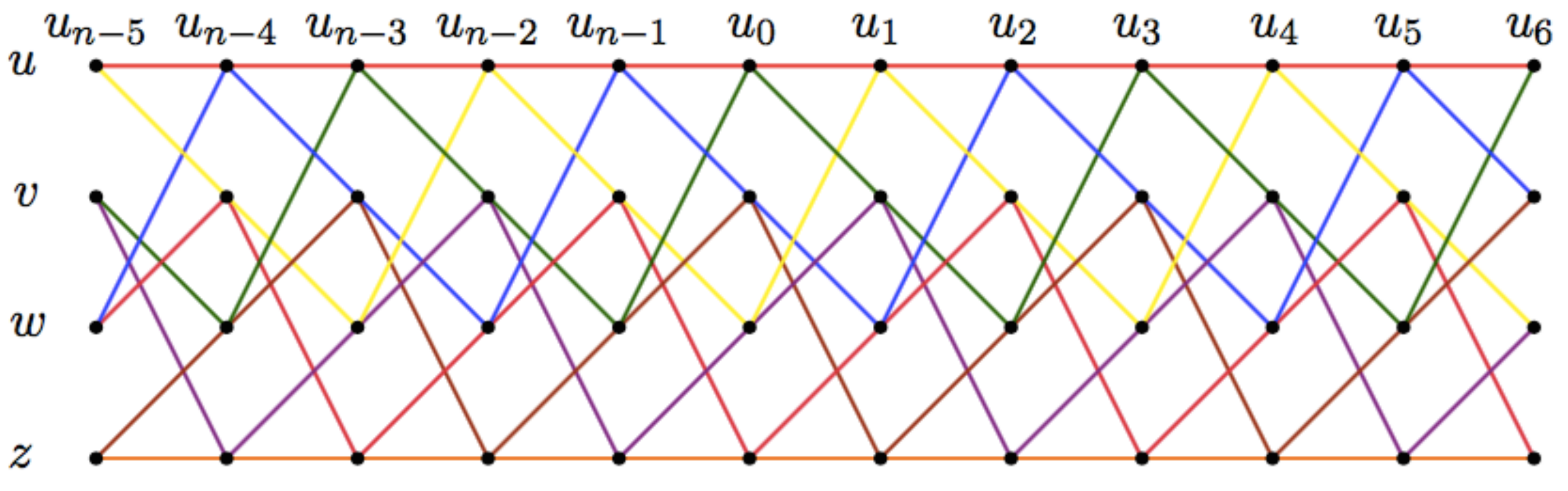}
\caption{The $n$-faces of $\m$ in the case when $T$ is as in (\ref{eq:T3}).}\label{fig:family2-n4T3}
\end{figure}

The automorphism $\beta \in \Aut(\G)$ from (\ref{eq:beta}) preserves the set of $n$-cycles (\ref{eq:nfacesT5}) since it fixes each of the faces $f_0, f_1, f_5$ and $f_7$, and interchanges $f_2$ with $f_3$ and $f_4$ with $f_6$. Since the other $\Aut(\m)$-orbit of faces of $\mathcal{M}$ either consists of $4$-faces or $2n$-faces and each edge of $\G$ lies on a unique $4$-cycle (recall that $n > 4$), while Lemma~\ref{facesofhereditarymaps} and Corollary~\ref{cor:twoedges} imply that the $2n$-faces are uniquely determined by the $n$-faces, the corresponding maps are completely determined (once we have decided for the length of the faces of the other $\Aut(\m)$-orbit). Moreover, since the automorphism $\beta$ preserves the set of the eight $n$-faces, it in fact follows that $\beta \in \Aut(\m)$. We thus only need to check if the resulting maps are indeed of class~$\2$. 

Suppose first that $\m$ has $4$-faces (and the $n$-faces from (\ref{eq:nfacesT5})). We show that $\m$ is in class $\2$ by exhibiting the automorphisms $\alpha_0$, $\alpha_1$ and $\alpha_{212}$, with respect to some base flag, so that we can apply Lemma~\ref{le:aux}. Let $\Phi$ be the flag of $\m$ containing the vertex $u_{0}$, the edge $u_0u_{1}$ and the $n$-face $f_0$. Then $\Phi \beta\alpha = \Phi^0$, $\Phi\beta = \Phi^1$ and $\Phi t_1 = \Phi^{2,1,2}$, and so $\m$ is of class~$\2$ by Lemma~\ref{le:aux}. 

Suppose now that $\m$ has $2n$-faces (and the $n$-faces from (\ref{eq:nfacesT5})). By the above remarks the $2n$-faces are completely determined. In fact, $n$ has to be odd for this to be possible (that is $n \equiv 3 \pmod{6}$) and in this case the four $2n$-faces are:
{\small \begin{equation*}
 \begin{tabular}{l} 
 $(u_0, u_1, v_2, z_3, z_4, w_5, \ldots, u_{n-3}, u_{n-2}, v_{n-1}, z_0, z_1, w_2, \ldots , u_{n-6}, u_{n-5}, v_{n-4}, z_{n-3}, z_{n-2}, w_{n-1})$,\\    %blue
$(u_0, v_1, z_2, z_3, w_4, u_5, \ldots, u_{n-3}, v_{n-2}, z_{n-1}, z_0, w_1, u_2, \ldots , u_{n-6}, v_{n-5}, z_{n-4}, z_{n-3}, w_{n-2}, u_{n-1})$,\\ %purple    
$(v_0, z_1, z_2, w_3, u_4, u_5,  \ldots, v_{n-3}, z_{n-2}, z_{n-1}, w_0, u_1, u_2,  \ldots , v_{n-6}, z_{n-5}, z_{n-4}, w_{n-3}, u_{n-2}, u_{n-1})$,\\ %red
$(v_0, w_1, v_2, w_3, \ldots, v_{n-1}, w_0, v_1, w_2, \ldots , v_{n-2}, w_{n-1})$.  %yellow  
\end{tabular}
\end{equation*}
}
%\begin{figure}[!ht]
%\centering
%\includegraphics[scale = 0.3]{}
%\caption{The $2n$-faces of $\m$ in the case when $T$ is as in (\ref{eq:T3}).}\label{fig:family2-n2nT3}
%\end{figure} 
Again let $\Phi$ be the flag corresponding to the vertex $u_0$, edge $u_0 u_{1}$ and the face $f_0$. As before we get $\Phi \beta\alpha = \Phi^0$, $\Phi\beta = \Phi^1$, while this time $\Phi t_1\beta = \Phi^{2,1,2}$, and so we can again apply Lemma~\ref{le:aux} to show that $\m$ is of class~$\2$.
\smallskip

\noindent
Subcase 1.2: $T$ is as in (\ref{eq:T4}), in which case $4$ divides $n$.\\
By Lemma~\ref{le:T(ii)} the eight $n$-faces of $\m$ are as represented in Figure~\ref{fig:family2-nT4}, that is:
\begin{equation}
\label{eq:T6}
 \begin{tabular}{rcccl} 
$f_0$ & $=$ & $f$ & $=$ & $(u_0,u_1,\ldots , u_{n-1})$,\\    
$f_1$ & $=$ &$ft_1$ & $=$ & $(u_0,v_1,z_2,w_3,u_4,v_5,z_6,w_7,\ldots,u_{n-4}, v_{n-3},z_{n-2},w_{n-1})$,\\       
$f_2$ & $=$ &$ft_2$ & $=$ & $(w_0,u_1,v_2,z_3,w_4,u_5,v_6,z_7,\ldots,w_{n-4}, u_{n-3},v_{n-2},z_{n-1})$,\\     
$f_3$ & $=$ &$ft_3$ & $=$ & $(z_0,w_1,u_2,v_3,z_4,w_5,u_6,v_7,\ldots,z_{n-4}, w_{n-3},u_{n-2},v_{n-1})$,\\    
$f_4$ & $=$ &$ft_4$ & $=$ & $(v_0,z_1,w_2,u_3,v_4,z_5,w_6,u_7,\ldots,v_{n-4}, z_{n-3},w_{n-2},u_{n-1})$,\\ 
$f_5$ & $=$ &$ft_5$ & $=$ &$(w_0,v_1,w_2,v_3,\ldots,w_{n-2},v_{n-1})$,\\ 
$f_6$ & $=$ &$ft_6$ & $=$ & $(v_0,w_1,v_2,w_3,\ldots,v_{n-2},w_{n-1})$,\\ 
$f_7$ & $=$ &$ft_7$ & $=$ & $(z_0,z_1,\ldots,z_{n-1})$.\\    
\end{tabular}
\end{equation}
\begin{figure}[!ht]
\centering
\includegraphics[scale = 0.3]{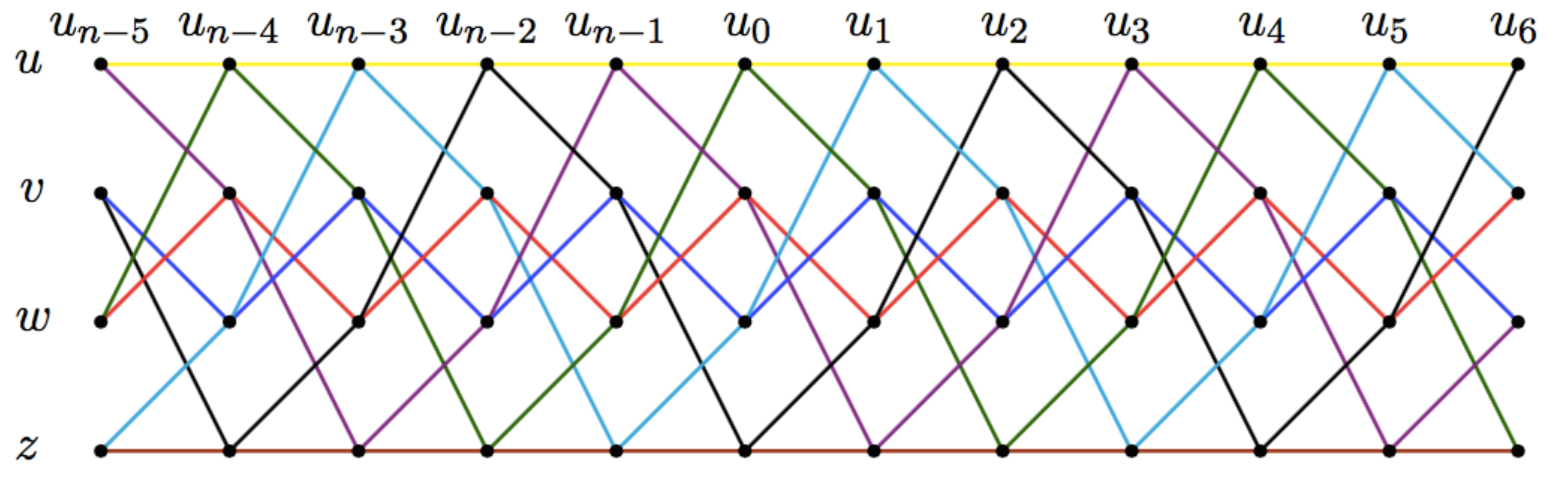}
\caption{The $n$-faces of $\m$ in the case when $T$ is as in (\ref{eq:T4}).}\label{fig:family2-nT4}
\end{figure} 

Again, $\beta$ from (\ref{eq:beta}) preserves the set of eight $n$-cycles (\ref{eq:T6}), implying that $\beta \in \Aut(\m)$. We show that the other faces of $\m$ must be of length $4$. Namely, if this was not the case then, by Corollary~\ref{cor:twoedges}, the other face containing the edge $u_0u_1$ would have to be 
$$
(u_0, u_1, v_2, w_3, u_4, u_5, v_6, w_7, \ldots, u_{n-4}, u_{n-3}, v_{n-2}, w_{n-1}),
$$
which is of length $n$, contradicting Lemma~\ref{le:lengths2}. Thus $\m$ contains $4$-faces (and the $n$-faces from (\ref{eq:T6})). Let $\Phi$ be the flag of $\m$ containing the vertex $u_{0}$, the edge $u_0u_{1}$ and the $n$-face $f_0$, and observe that $\Phi\beta = \Phi^1$, $\Phi \beta\alpha = \Phi^0$ and $\Phi t_1=\Phi^{2,1,2}$. Thus Lemma~\ref{le:aux} implies that $\m$ is in class $\2$. 
\medskip

\noindent
Case 2: $\m$ has faces of lengths $4$ and $2n$.\\
Recall that any automorphism of $\G$ preserves the set of $4$-cycles of $\G$, and so an automorphism of $\G$ is an automorphism of $\m$ if and only if it preserves the set of $2n$-faces. Without loss of generality we can assume that 
$$
f=(u_0,u_1,\ldots,u_{n-2},v_{n-1},z_0,z_1,\ldots,z_{n-2},w_{n-1})
$$ 
is one of the faces of $\m$. Note that $f\mu = f$ and $f\rho = f$ (in fact, $\rho$ is a shunt for $f$). Since $\mu$ and $\rho$ both normalize the subgroup $T$ from Lemma~\ref{le:T(ii)}, it follows that $\mu, \rho \in \Aut(\m)$. We again distinguish the two possibilities for the subgroup $T$. \smallskip

\noindent
Subcase 2.1: $T$ is as in (\ref{eq:T3}), in which case $3$ divides $n$.\\
The $2n$-faces are then (see Figure~\ref{fig:family2-2nT3}):
 \begin{equation*}
\label{eq:T3-2n-2}
 \begin{tabular}{ccl} 
$f$ & $=$ & $(u_0, u_1, u_2,\ldots, u_{n-2},v_{n-1}, z_0,z_1,\ldots, z_{n-2},w_{n-1})$,\\    %red
$ft_1$ & $=$ & $(u_0, v_1, w_2,u_3, v_4, w_5, \ldots, v_{n-2}, z_{n-1}, z_{0},w_1,v_2,z_3,\ldots, w_{n-2}, u_{n-1})$, \\ %blue   
$ft_2$ & $=$ & $(w_0,u_1,v_2,w_3, u_4, v_5, \ldots u_{n-2}, u_{n-1},v_0,z_1,w_2,v_3,z_4 \dots, z_{n-2}, z_{n-1})$, \\    %green
$ft_3$ & $=$ & $(v_0, w_1, u_2, v_3, w_4,u_5, \dots, w_{n-2}, v_{n-1}, w_0, v_1, z_2, w_3,v_4,\ldots, v_{n-2}, w_{n-1})$.  %purple
\end{tabular}
\end{equation*}

 \begin{figure}[!ht]
\centering
\includegraphics[scale = 0.4]{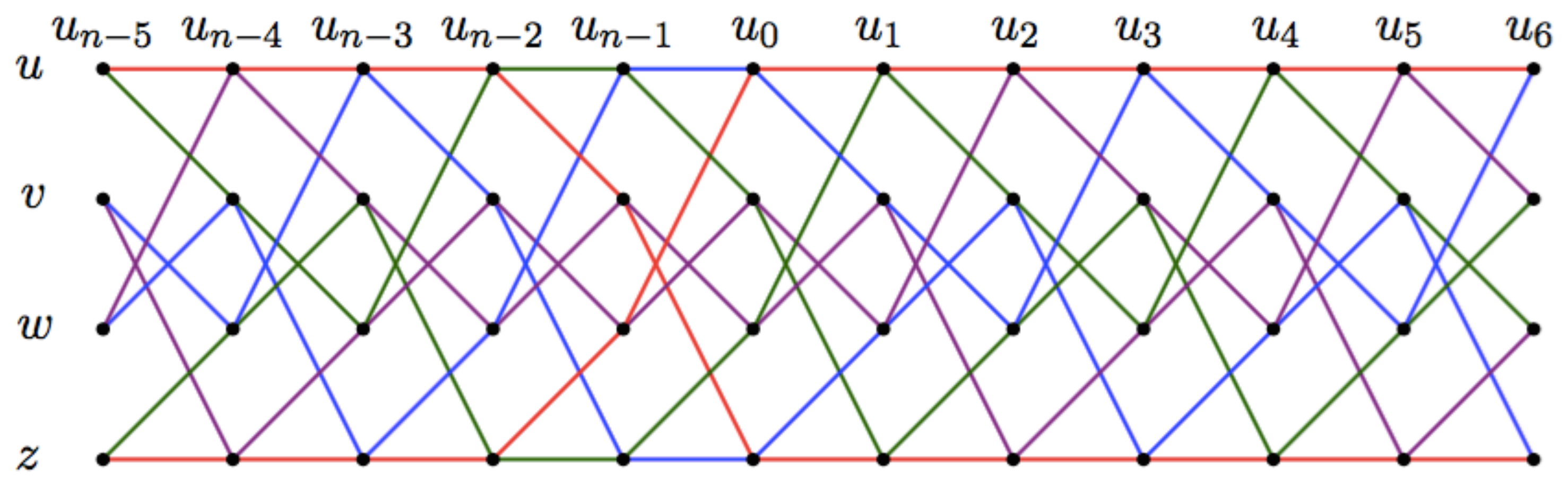}
\caption{The $2n$-faces of $\m$ in the case when $T$ is as in (\ref{eq:T3}).}\label{fig:family2-2nT3}
\end{figure} 

Let $\Phi$ be the flag of $\m$ containing the vertex $u_{0}$, the edge $u_0w_{n-1}$ and the $2n$-face $f$. Then $\Phi \rho\mu = \Phi^0$, $\Phi\mu = \Phi^1$ and $\Phi t_1 = \Phi^{2,1,2}$, and so Lemma~\ref{le:aux} implies that $\m$ is of class~$\2$.
\smallskip

\noindent
Subcase 2.2: $T$ is as in (\ref{eq:T4}), in which case $4$ divides $n$.\\
The $2n$-faces are then (see Figure~\ref{fig:family2-2nT4}):
 \begin{equation*}
\label{eq:T4-2n-2}
 \begin{tabular}{ccl} 
$f$ & $=$ & $(u_0, u_1, u_2,\ldots, u_{n-2},v_{n-1}, z_0,z_1,\ldots, z_{n-2},w_{n-1})$,\\    %red
$ft_1$ & $=$ & $(u_0, v_1, z_2,w_3, u_4, v_5, \ldots, z_{n-2}, z_{n-1}, z_{0},w_1,u_2,v_3,\ldots, u_{n-2}, u_{n-1})$, \\ %blue   
$ft_2$ & $=$ & $(w_0,u_1,v_2,z_3, w_4, u_5, \ldots v_{n-2}, w_{n-1},v_0,z_1,w_2,u_3,v_4 \dots, w_{n-2}, v_{n-1})$, \\    %green
$ft_3$ & $=$ & $(v_0, w_1, v_2, w_3, v_4,w_5, \dots, v_{n-2}, z_{n-1}, w_0, v_1, w_2, v_3,w_4,\ldots, w_{n-2}, u_{n-1})$.  %purple
\end{tabular}
\end{equation*}

 \begin{figure}[!ht]
\centering
\includegraphics[scale = 0.4]{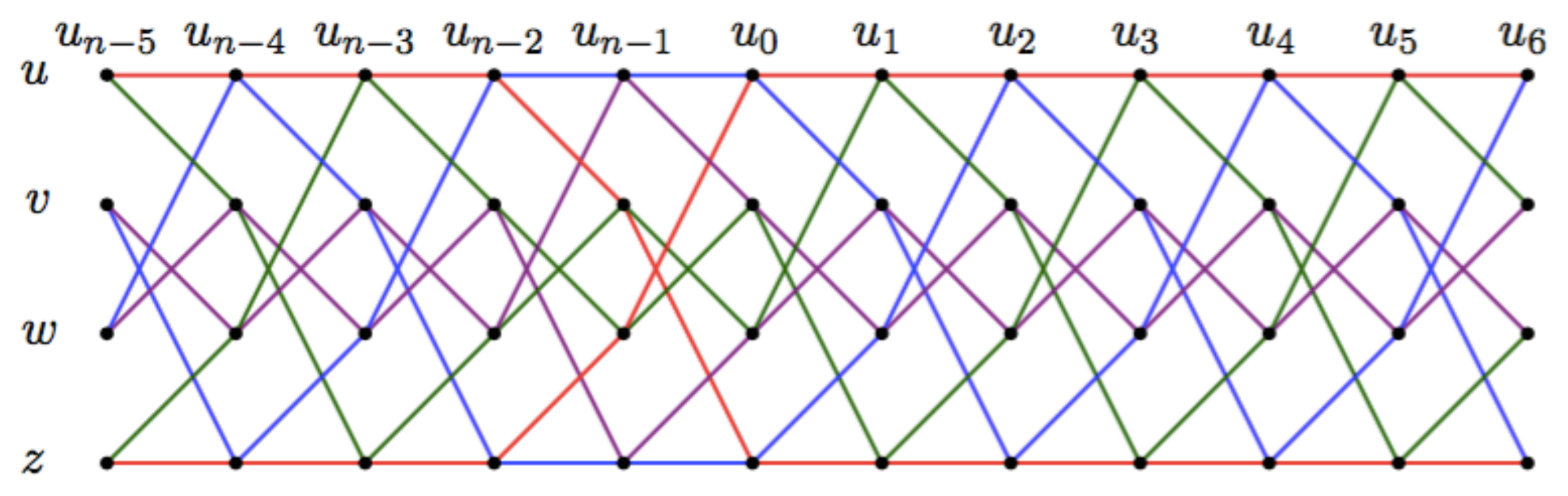}
\caption{The $2n$-faces of $\m$ in the case when $T$ is as in (\ref{eq:T4}).}\label{fig:family2-2nT4}
\end{figure} 
Letting $\Phi$ be as above we again find that $\Phi \rho\mu = \Phi^0$, $\Phi \mu = \Phi^1$ and $\Phi t_1 = \Phi^{2,1,2}$ (note however, that the $t_1$ now differs from the one in the previous paragraph). Thus Lemma~\ref{le:aux} implies that $\m$ is a map of class~$\2$.
\medskip

The proof that all of the obtained maps are pairwise nonisomorphic is similar to the one in the proof of Theorem~\ref{th:RW(i)}.  
\hfill $\Box$
\end{proof}

%%
%%%        subsec:RW_iii        %%%%%
%%
\subsection{Family (iii)} 

Combining together the results of~\cite{KovKutRuf10} and \cite{Wil08} with Corollary~\ref{cor:1-regular} the classification of maps of class~$\2$ whose underlying graphs belong to family (iii) from Proposition~\ref{pro:RW} is straightforward. 

\begin{theorem}
\label{th:RW(iii)}
Let $\G = R_{2m}(2b,r)$, where $b^2 \equiv \pm 1 \pmod m$ and either $r = 1$, or $r = m-1$ with $m$ even, be such that $\G$ does not belong to any of the families (i) and (ii) from Proposition~\ref{pro:RW}. Then $\G$ is the underlying graph of a map of class~$\2$ if and only if $b^2 \equiv 1 \pmod{m}$ in which case there are exactly three pairwise nonisomorphic such maps. 
\end{theorem}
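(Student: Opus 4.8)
The plan is to reduce the whole statement to Corollary~\ref{cor:1-regular}. First I would recall, citing \cite{Wil08} and \cite{KovKutRuf10}, the structure of $\Aut(\G)$ for $\G=R_{2m}(2b,r)$ in family (iii). Besides the rotation $\rho$ and the reflection $\mu$ of (\ref{eq:rhomu}), which generate a dihedral group with the two vertex orbits $\{x_i\}$ and $\{y_i\}$, these graphs admit an extra automorphism $\phi$ interchanging the rim and the hub, built from the multiplier $b$; this is exactly where the hypothesis $b^2\equiv\pm1\pmod m$ is used, so that $\phi$ is well defined. The role of excluding families (i) and (ii) is precisely that for the remaining graphs $\Aut(\G)=\langle\rho,\mu,\phi\rangle$ acts $1$-regularly on $\G$, a fact I would quote from the two cited papers. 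Consequently the vertex stabilizer $\Aut(\G)_{x_0}$ has order $4$ and is isomorphic either to $\ZZ_4$ or to $\ZZ_2\times\ZZ_2$.

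By Corollary~\ref{cor:1-regular}, $\G$ is the underlying graph of a map of class $\2$ if and only if $\Aut(\G)_{x_0}\cong\ZZ_2\times\ZZ_2$, and in that case there are exactly three pairwise nonisomorphic such maps. Thus the entire theorem reduces to the single assertion that $\Aut(\G)_{x_0}\cong\ZZ_2\times\ZZ_2$ precisely when $b^2\equiv1\pmod m$ (and $\cong\ZZ_4$ when $b^2\equiv-1\pmod m$). This is the only genuine computation, and once it is in place the count of three maps is immediate.

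To carry it out I would analyse the action of the order-$4$ stabilizer on the four neighbours of $x_0$, namely $\{x_1,x_{-1},y_0,y_{-a}\}$ with $a=2b$. The reflection $\mu$ lies in the stabilizer and acts as the double transposition $(x_1\,x_{-1})(y_0\,y_{-a})$. Since $\Aut(\G)$ is $1$-regular, the stabilizer acts regularly, hence faithfully, on these four neighbours; such a group of order $4$ is $\ZZ_2\times\ZZ_2$ exactly when each of its nontrivial elements is a double transposition, and $\ZZ_4$ exactly when it contains a $4$-cycle. I would therefore exhibit the second generator $\psi$ as the unique stabilizer element sending the rim-neighbour $x_1$ to a spoke-neighbour; concretely $\psi$ is $\phi$ corrected by a suitable power of $\rho$ (and possibly $\mu$) so as to fix $x_0$. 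The key identity is that $\psi^2$ is an orbit-preserving automorphism acting on the indices as multiplication by $b^2$. Hence $\psi^2$ is the identity exactly when $b^2\equiv1\pmod m$, giving $\Aut(\G)_{x_0}=\langle\mu,\psi\rangle\cong\ZZ_2\times\ZZ_2$, whereas when $b^2\equiv-1\pmod m$ the element $\psi$ acts as a $4$-cycle on the four neighbours, so $\psi^2=\mu$ by faithfulness and the stabilizer is cyclic.

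I expect the main obstacle to be this last step: correctly reading off $\psi$ from the generator $\phi$ of \cite{Wil08}, and then tracking the index arithmetic carefully — in particular keeping the reduction modulo $m$ separate from the reduction modulo $2m=n$, and handling the two cases $r=1$ and $r=m-1$ (with $m$ even) uniformly — so as to verify cleanly that $\psi^2$ collapses to multiplication by $b^2$ and hence to the dichotomy $b^2\equiv\pm1\pmod m$. Everything else is bookkeeping, and the final classification then follows verbatim from Corollary~\ref{cor:1-regular}.
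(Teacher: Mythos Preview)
Your proposal is correct and follows essentially the same approach as the paper: both reduce the theorem to Corollary~\ref{cor:1-regular} via the $1$-regularity of $\Aut(\G)$ (quoted from \cite{KovKutRuf10}) and the dichotomy on the vertex stabilizer. The only difference is that the paper simply cites \cite{Wil08} for the fact that the stabilizer is cyclic when $b^2\equiv-1\pmod m$ and Klein when $b^2\equiv1\pmod m$, whereas you propose to verify this directly by tracking the action of $\psi$ on the four neighbours of $x_0$; this is a presentational choice rather than a genuinely different argument.
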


\begin{proof}
By \cite[Proposition~3.8]{KovKutRuf10} the automorphism group of $\G$ is $1$-regular and is generated by $\rho$, $\mu$ and $\sigma$, where $\rho$ and $\mu$ are as in (\ref{eq:rhomu}) and $\sigma$ is as in \cite[page 16]{Wil08}. It was pointed out in \cite{Wil08} that in the case of $b^2 \equiv -1 \pmod{m}$ the vertex stabilizers are cyclic (and thus isomorphic to $\ZZ_4$) while in the case of $b^2 \equiv 1 \pmod{m}$ they are isomorphic to the Klein $4$-group. We can thus apply Corollary~\ref{cor:1-regular}.
\hfill $\Box$
\end{proof}

%%
%%%        subsec:RW_4        %%%%%
%%
\subsection{Family (iv)} 

The fourth family of edge-transitive Rose Window graphs consists of the graphs $R_{12m}(3m+2,3m-1)$ and $R_{12m}(3m-2,3m+1)$, where $m \geq 1$. As in \cite{Wil08}, using the fact that $R_n(a,r) \cong R_n(-a,r) \cong R_n(a,-r)$, we can denote these graphs as $R_{12m}(3d+2, 9d+1)$, where $d=m$ or $d=-m$ (modulo $12m$). In~\cite{Wil08} the following automorphism $\sigma$ of $\G = R_{12m}(3d+2,9d+1)$ has been identified (recall that $a=3d+2$):
$$x_i^{\sigma}=\left\{ \begin{array}{lcl}
x_i&  ; & i \equiv 0 \pmod{3}\\
y_{i-1}&  ; & i \equiv 1 \pmod{3}\\
y_{i+1-a}&  ; & i \equiv 2 \pmod{3}
\end{array}
 \right. \mbox{ and }y_i^{\sigma}=\left\{ \begin{array}{lcl}
x_{i+1}&  ; & i \equiv 0 \pmod{3}\\
x_{i-1+a}&  ; & i \equiv 1 \pmod{3}\\
y_{i+6d}&  ; & i \equiv 2 \pmod{3}.
\end{array}
 \right.$$
 
Moreover, it was shown that whenever $m \equiv 2 \pmod{4}$, setting $b = d+1$, an additional automorphism $\tau$ of $\G$ exists:
 $$x_i^{\tau}=\left\{ \begin{array}{lcl}
x_{bi}&  ; & i \equiv 0 \pmod{3}\\
y_{bi-b}&  ; & i \equiv 1 \pmod{3}\\
x_{bi+b-1}&  ; & i \equiv 2 \pmod{3}
\end{array}
 \right.\mbox{ and }y_i^{\tau}=\left\{ \begin{array}{lcl}
x_{bi+1}&  ; & i \equiv 0 \pmod{3}\\
y_{4+bi-4b}&  ; & i \equiv 1 \pmod{3}\\
y_{bi+b-1}&  ; & i \equiv 2 \pmod{3}.
\end{array}
 \right.$$ 
Observe that $a=3b-1$, $r=4-3b$ and $3b^2\equiv 3 \pmod{12m}$, and so $a \equiv 2 \pmod{3}$ and $r \equiv 1 \pmod{3}$. It was shown in \cite{KovKutRuf10} that $\Aut(\G)=\langle \rho, \mu, \sigma, \tau \rangle$, whenever $m \equiv 2 \pmod{4}$, and $\Aut(\G)=\langle \rho, \mu,\sigma\rangle$ otherwise, where $\rho$ and $\mu$ are as in (\ref{eq:rhomu}). This enables us to classify the maps of class $\2$ with underlying graphs from family (iv).
 
\begin{theorem}
\label{th:RW(iv)}
Let $\G = R_{12m}(3d+2, 9d+1)$ be a Rose Window graph, where $d = m$ or $d = 11m$. Then 
\begin{enumerate} 
\item[(i)] if $m \not\equiv 2 \pmod{4}$, $\G$ is the underlying graph of exactly three nonisomorphic maps of class $\2$, 
\item[(ii)] if $m \equiv 2 \pmod{4}$, $\G$ is the underlying graph of exactly two pairwise nonisomorphic maps of class $\2$.
\end{enumerate}
\end{theorem}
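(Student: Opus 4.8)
The plan is to reduce both cases to the consistent-cycle machinery of Section~\ref{sec:maps}, using the description of $\Aut(\G)$ recorded in~\cite{KovKutRuf10}. The first task is to pin down the subgroup $H=\langle\rho,\mu,\sigma\rangle$. Writing out the action of $\mu$ and $\sigma$ on the four neighbours $x_1,x_{-1},y_0,y_{-a}$ of $x_0$, one finds that $\mu=(x_1\,x_{-1})(y_0\,y_{-a})$ and $\sigma=(x_1\,y_0)(x_{-1}\,y_{-a})$ both act as double transpositions, so that $\langle\mu,\sigma\rangle$ acts on this neighbourhood as a regular Klein four-group. Since $H$ is arc-transitive, this forces $H$ to be $1$-regular with vertex stabiliser isomorphic to $\ZZ_2\times\ZZ_2$. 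When $m\not\equiv 2\pmod 4$ we have $\Aut(\G)=H$ by~\cite{KovKutRuf10}, so Corollary~\ref{cor:1-regular} applies verbatim and yields exactly three pairwise non-isomorphic maps of class~$\2$. This settles part~(i).

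The delicate case is~(ii), where $\Aut(\G)=\langle\rho,\mu,\sigma,\tau\rangle$ strictly contains $H$ and Corollary~\ref{cor:1-regular} is no longer available. I would first compute the action of $\tau$ on the neighbourhood of $x_0$: a direct check (using $a=3b-1$ and $3b^2\equiv 3\pmod{12m}$) shows that $\tau$ fixes $x_0$, fixes $x_{-1}$ and $y_{-a}$, and interchanges $x_1$ and $y_0$, so it acts there as a single transposition. Together with $\mu$ and $\sigma$ it therefore generates a dihedral group of order $8$ on the neighbourhood of $x_0$; in particular the vertex stabiliser in $\Aut(\G)$ has order at least $8$, so $\Aut(\G)$ is \emph{not} $1$-regular. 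Consequently any map $\m$ of class~$\2$ with underlying graph $\G$ must, by the first part of Theorem~\ref{the:1-regular} together with Lemma~\ref{le:tetra_cons}, have $\Aut(\m)$ equal to a $1$-regular subgroup $G<\Aut(\G)$ whose vertex stabiliser is one of the Klein four-subgroups of this dihedral group. The candidate subgroups, their three orbits of ($G$-symmetric) consistent cycles, and the resulting candidate faces I would read off from the generators, as in the paragraph preceding Lemma~\ref{le:lengths}.

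With the candidate maps in hand the counting is organised exactly as in the proofs of Theorems~\ref{th:RW(i)} and~\ref{th:RW(ii)}: for a chosen pair of orbits of consistent cycles I would exhibit a base flag $\Phi$ together with automorphisms realising $\Phi^0$, $\Phi^1$ and $\Phi^{2,1,2}$ and invoke Lemma~\ref{le:aux} to certify membership in class~$\2$, while two candidate maps are isomorphic precisely when some element of $\Aut(\G)$ carries the face set of one onto that of the other. The whole point of case~(ii) is that the extra automorphism $\tau$ acts non-trivially on these data: it either merges two candidate face sets (so that the corresponding maps become isomorphic) or stabilises a candidate face set (so that $\tau\in\Aut(\m)$, whence $\Aut(\m)\supseteq\langle H,\tau\rangle$ acts regularly on $\fl(\m)$ and $\m$ is reflexible rather than of class~$\2$). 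A careful tally of which candidates survive and which coincide is what cuts the three maps of case~(i) down to exactly two here; I would cross-check the final number against the rotary-map count of~\cite{KovKutRuf10} via the relation $s+2c=3$ of Proposition~\ref{pro:cons} for the full group $\Aut(\G)$.

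The main obstacle is precisely this last step: determining the exact order and stabiliser structure of $\Aut(\G)$ (in particular whether the dihedral action on a neighbourhood is faithful), listing the $1$-regular subgroups with Klein four stabilisers together with their orbits of consistent cycles, and then tracking the action of $\tau$ finely enough to decide, for each candidate map, whether it is genuinely of class~$\2$ and which candidates are isomorphic. Everything else — the local computations above, the verification via Lemma~\ref{le:aux}, and the whole of part~(i) — is routine once the group-theoretic picture from~\cite{KovKutRuf10} is in place.
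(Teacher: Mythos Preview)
Your treatment of part~(i) is correct and is exactly what the paper does: once $\Aut(\G)=H=\langle\rho,\mu,\sigma\rangle$ is seen to be $1$-regular with vertex stabiliser $\langle\mu,\sigma\rangle\cong\ZZ_2\times\ZZ_2$, Corollary~\ref{cor:1-regular} finishes immediately.

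For part~(ii) your outline is on the right track but the final counting heuristic is wrong, and the error hides the one genuinely new step. You write that the action of $\tau$ ``cuts the three maps of case~(i) down to exactly two''. It does not. With $H_1=\langle\rho,\mu,\sigma\rangle$ one checks (using $\tau\rho\tau=\rho\sigma$ and $\tau\rho\mu\sigma\tau=\rho\mu\sigma$) that $\tau$ fixes one $H_1$-orbit $\mathcal{O}_3$ of consistent cycles and swaps the other two, $\mathcal{O}_1\leftrightarrow\mathcal{O}_2$. Hence the candidate map with face set $\mathcal{O}_1\cup\mathcal{O}_2$ is $\tau$-invariant and therefore reflexible, while the two candidates $\mathcal{O}_1\cup\mathcal{O}_3$ and $\mathcal{O}_2\cup\mathcal{O}_3$ are interchanged by $\tau$ and so isomorphic. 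Thus $H_1$ contributes exactly \emph{one} map of class~$\2$, not two.

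The missing second map comes from a second $1$-regular subgroup that your sketch never produces. Although $\langle\mu,\sigma\rangle$ is the unique transitive Klein four-subgroup of the dihedral stabiliser $\langle\mu,\sigma,\tau\rangle$, there are \emph{two} index-$2$ subgroups of $\Aut(\G)$ with this vertex stabiliser: $H_1=\langle\mu,\sigma,\rho\rangle$ and $H_2=\langle\mu,\sigma,\tau\rho\rangle$. (Any such subgroup must contain an element carrying $x_0$ to $x_1$, and modulo $\langle\mu,\sigma\rangle$ the only possibilities are $\rho$ and $\tau\rho$.) One then has to verify that $H_2$ really is proper in $\Aut(\G)$; the paper does this by exhibiting an explicit $H_2$-invariant set of four $12m$-cycles that $\rho$ fails to preserve. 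The same orbit analysis as for $H_1$ shows that $H_2$ also yields exactly one map of class~$\2$, and since $H_1,H_2\trianglelefteq\Aut(\G)$ are distinct, no element of $\Aut(\G)$ can carry one map to the other. That is where the count of two actually comes from.
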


\begin{proof} 
We first deal with the case when $m \not\equiv 2 \pmod{4}$. By \cite[Proposition~3.5]{KovKutRuf10} the automorphism group $\Aut(\G)$ is $1$-regular in this case and is isomorphic to $\langle \rho, \mu, \sigma\rangle$. Note that $\Aut(\G)_{x_0} = \langle \sigma, \mu \rangle \cong \ZZ_2 \times \ZZ_2$, and so the vertex stabilizers in $\Aut(\G)$ are isomorphic to the Klein $4$-group. We can thus apply Corollary~\ref{cor:1-regular} to prove that $\G$ is the underlying graph of three pairwise nonisomorphic maps of class~$\2$.

For the rest of the proof we can thus assume that $m \equiv 2 \pmod{4}$. Recall that in this case $\Aut(\G) = \langle \rho, \mu, \sigma, \tau \rangle$ is arc-transitive with vertex-stabilizers of order $8$. In particular, $\Aut(\G)_{x_0} = \langle \mu, \sigma, \tau \rangle \cong D_4$, the dihedral group of order $8$. It is easy to see that $\sigma$ commutes with both $\mu$ and $\tau$, while $\tau\mu\tau = \mu\sigma$ and $\tau\rho\tau = \rho\sigma$. 

Suppose $\m $ is a map in class $\2$ with underlying graph $\G$ and recall that the automorphism group $\Aut(\m)$ is then $1$-regular on $\G$. By Theorem~\ref{the:1-regular} the boundaries of the faces of $\m$ are $\Aut(\m)$-symmetric consistent cycles, and so Lemma~\ref{le:tetra_cons} implies that the vertex stabilizers in $\Aut(\m)$ are isomorphic to the Klein $4$-group (which of course must be transitive on the neighbourhood of the fixed vertex). It is easy to see that the only subgroup of $\Aut(\G)_{x_0} = \langle \mu, \sigma, \tau\rangle$, transitive on the set of four neighbours of $x_0$ and isomorphic to the Klein $4$-group is $\langle \mu, \sigma\rangle$. Thus, $\Aut(\m)$ is a transitive index $2$ subgroup of $\Aut(\G)$ containing the subgroup $\langle \mu, \sigma\rangle$. 

%%%
We claim that $H_1=\langle \mu, \sigma, \rho \rangle$ and $H_2=\langle \mu, \sigma, \tau\rho \rangle$ are the only two such subgroups.
Since $\Aut(\m)$ must be vertex-transitive, it has to contain an element $\gamma \in \Aut(\G)$ mapping $x_0$ to $x_1$. But $\Aut(\G)$ is vertex-transitive with $\Aut(\G)_{x_0} = \langle \mu, \sigma, \tau\rangle$, and so the fact that $\tau$ normalizes $\langle \mu, \sigma \rangle$ implies that $\gamma \in \langle \mu, \sigma \rangle \rho \cup \langle \mu, \sigma\rangle \tau\rho$. It follows that $\Aut(\m)$ could only be one of $H_1$ and $H_2$. We next prove that $H_1$ and $H_2$ are indeed of index $2$ in $\Aut(\G)$ (since they contain $\langle \mu, \sigma \rangle$ and an element mapping $x_0$ to $x_1$, they are both arc-transitive on $\G$). Note that the fact that each $H_i$ acts arc-transitively implies that $H_i$ is of index $2$ in $\Aut(\G)$ or $H_i \cong \Aut(\G)$. It thus suffices to find an element of $\Aut(\G)$ which is not contained in $H_i$. 

We first deal with $H_1$. 
To this end we identify four cycles of $\G$, each of length $12m$. Observe that $a+1 = 3d+3$, implying that $\gcd(12m,a+1) = 3$ (recall that $d = m$ or $d = -m$ and $m$ is even). Therefore, the cycle
$$
C_1 = (x_0,x_1,y_1,x_{a+1},x_{a+2},y_{a+2},x_{2(a+1)},\ldots , y_{-a})
$$
is indeed of length $12m$. Observe that $C_1$ is an $H_1$-consistent cycle with a shunt $\sigma\rho$. Let $C_2 = C_1\rho$ and $C_3 = C_1\rho^2$ be the images of $C_1$ under $\rho$ and $\rho^2$, respectively, and note that $C_2$ and $C_3$ are thus also $H_1$-consistent cycles of $\G$. Finally, observe that $\gcd(12m,r) = 1$ and let $C_4 = (y_0,y_r,y_{2r},\ldots , y_{-r})$. Of course, $C_4$ also is an $H_1$-consistent cycle with a shunt $\rho^r$. It is easy to see that each of the generators $\rho$, $\sigma$ and $\mu$ of $H_1$ preserves the set $\{C_1, C_2, C_3, C_4\}$ setwise ($\rho$ fixes $C_4$ and permutes $C_1$, $C_2$ and $C_3$, $\sigma$ interchanges $C_1$ with $C_3$ and $C_2$ with $C_4$, while $\mu$ interchanges $C_1$ with $C_3$ and fixes both $C_2$ and $C_4$). Since $\tau$ clearly does not map $C_1$ to any of the cycles $C_i$, this implies that $\tau \notin H_1$, and so $H_1$ is indeed of index $2$ in $\Aut(\G)$.

The proof that $H_2$ is also of index $2$ in $\Aut(\G)$ is similar. First we observe that $\gcd(12m, 2+2r+a) = \gcd(12m,9d+6) = 6$, and so the cycle
$$
C_1 = (x_0,x_1,x_2,y_2,y_{2+r},y_{2+2r},x_{2+2r+a},x_{3+2r+a},x_{4+2r+a},\ldots , y_{-a})
$$
is of length $12m$ and is an $H_2$-consistent cycle with a shunt $\sigma\tau\rho$. Setting $C_2 = C_1\mu$, $C_3 = C_1\tau\rho$ and $C_4 = C_1\tau\rho\sigma$ we get three other $H_2$-consistent cycles. 
%In particular, 
%$$
%\begin{array}{l}
%C_2 = (x_0,y_0,y_r,y_{2r},x_{2r+a},x_{2r+a+1},x_{2r+a+2},y_{2r+a+2}, \ldots , x_{-1}),\\
%C_3 = (x_1,y_1,x_{1+a},y_{1+a},x_{1+2a},y_{1+2a},\ldots , y_{1-a}),\\
%C_4 = (x_2,x_3,x_4,y_4,y_{4+r},y_{4+2r},x_{4+2r+a},x_{5+2r+a},x_{6+2r+a},\ldots , y_{2-a})
%\end{array}
%$$
We can then verify that each of the generators $\tau\rho$, $\sigma$ and $\mu$ of $H_2$ preserve the set $\{C_1, C_2, C_3, C_4\}$ setwise 
%($\tau\rho$ fixes $C_4$ and permutes $C_1$, $C_2$ and $C_3$, $\sigma$ interchanges $C_1$ with $C_2$ and $C_3$ with $C_4$, while $\mu$ interchanges $C_1$ with $C_2$ and fixes both $C_3$ and $C_4$). 
Since $\rho$ clearly does not map $C_1$ to any of the cycles $C_i$, this implies that $\rho \notin H_2$, and so $H_2$ is indeed of index $2$ in $\Aut(\G)$.

We now classify the maps $\m$ of class~$\2$ with underlying graph $\G$ and automorphism group isomorphic to $H_1$ or $H_2$ separately. We first deal with the maps $\m$ with $\Aut(\m) = H_1$. By Lemma~\ref{facesofhereditarymaps} the boundaries of the faces of $\m$ are $H_1$-symmetric consistent cycles. Since $H_1$ is $1$-regular, there are exactly three orbits, say $\mathcal{O}_1, \mathcal{O}_2$ and $\mathcal{O}_3$, of $H_1$-consistent directed cycles of $\G$ and by Lemma~\ref{le:cons} there is exactly one $H_1$-consistent directed cycle containing the arc $(x_{-1},x_0)$ from each of these three orbits. Let $\vec{C}_1$, $\vec{C}_2$ and $\vec{C}_3$ be the corresponding $H_1$-consistent directed cycles with $\vec{C}_i \in \mathcal{O}_i$. With no loss of generality we can assume $\vec{C}_1$ contains the $2$-arc $(x_{-1},x_0,x_1)$, $\vec{C}_2$ contains the $2$-arc $(x_{-1},x_0,y_0)$, and $\vec{C}_3$ contains the $2$-arc $(x_{-1},x_0,y_{9d-2})$. Observe that $\rho$ maps the arc $(x_{-1},x_0)$ to the arc $(x_0,x_1)$, and so the fact that $H_1$ is $1$-regular implies that $\rho$ is a shunt of $\vec{C}_1$ in $H_1$. Similarly, $\rho\sigma$ is a shunt of $\vec{C}_2$ in $H_1$ and $\rho\sigma\mu$ is a shunt of $\vec{C}_3$ in $H_1$. Note that by Lemma~\ref{le:cons} any pair of orbits $\mathcal{O}_i$, $i \in \{1,2,3\}$, determines a map $\m$ with underlying graph $\G$, such that $H_1 \leq \Aut(\m)$. We thus only have to check which pairs of orbits are such that $\Aut(\m) = H_1$ and not $\Aut(\m) = \Aut(\G)$ (which occurs if and only if $\tau \in \Aut(\m)$). Since $\tau\rho\tau = \rho\sigma$ and $\tau \rho\mu\sigma\tau = \rho\mu\sigma$ it follows that $\tau$ interchanges the $H_1$-orbits $\mathcal{O}_1$ and $\mathcal{O}_2$ and fixes the orbit $\mathcal{O}_3$ (in fact, it fixes the cycle $\vec{C}_3$). Thus $\tau \in \Aut(\m)$ if and only if the boundaries of faces of $\m$ are all the members of the orbits $\mathcal{O}_1$ and $\mathcal{O}_2$. Consequently, $\m$ is of class~$\2$ if and only if the boundaries of its faces are all the members of $\mathcal{O}_3$ and one of $\mathcal{O}_1, \mathcal{O}_2$. Since $\tau$ fixes $\mathcal{O}_3$ and interchanges $\mathcal{O}_1$ and $\mathcal{O}_2$, this proves that $H_1$ gives rise to exactly one map of class $\2$, up to isomorphism.

In a similar way one can prove that there is exactly one map of class $\2$ with underlying graph $\G$ and automorphism group $H_2$.

It is clear that the two maps of class $\2$, corresponding to $H_1$ and $H_2$, respectively, are not isomorphic since a corresponding isomorphism would have to be an automorphism of $\G$, and so $\G$ is the underlying graph of exactly two maps of class $\2$.
 \hfill $\Box$
\end{proof}

%%%

\end{document}